\documentclass{article}
\usepackage{graphicx} 

\usepackage{setspace}

\usepackage{amsmath}
\usepackage{amssymb}
\usepackage{amsthm}
\usepackage{mathtools}
\usepackage{empheq}
\usepackage{enumitem}
\usepackage{titlesec}
\usepackage{graphicx}
\usepackage{caption}
\usepackage{subcaption}
\usepackage{diagbox}
\usepackage{hyperref}

\usepackage[T1]{fontenc}
\usepackage[utf8]{inputenc}
\usepackage{comment}
\usepackage{indentfirst}
\setlength{\parskip}{\baselineskip}
\usepackage[top=1.25in,left=1.25in,right=1.25in]{geometry}

\numberwithin{equation}{section}

\title{\Large{\uppercase{\bf Common neighbours in planar graphs}}}
\author{\Large{Riccardo W. Maffucci}}
\date{}
\newcommand{\Addresses}{  
		R.W.~Maffucci, \textsc{Dipartimento di Matematica, Universit\`a di Torino\\\indent Via Carlo Alberto 10, Turin 10123, Italy}\par\nopagebreak\vspace{-0.35cm}
		\textit{E-mail address}, R.W.~Maffucci: \href{mailto:riccardowm@hotmail.com}{\texttt{riccardowm@hotmail.com}}
  }

\setlength{\parindent}{1.5em}

\def\cc{\mathcal{C}}
\def\ce{\mathcal{E}}
\def\cq{\mathcal{Q}}
\def\calr{\mathcal{R}}
\def\ct{\mathcal{T}}

\def\fa{\mathfrak{A}}
\def\ft{\mathfrak{T}}

\newtheorem{thm}{Theorem}[section]
\newtheorem{lemma}[thm]{Lemma}
\newtheorem{prop}[thm]{Proposition}
\newtheorem{cor}[thm]{Corollary}
\newtheorem{defin}[thm]{Definition}

\begin{document}
\titleformat{\section}
  {\Large\scshape}{\thesection}{1em}{}
\titleformat{\subsection}
  {\large\scshape}{\thesubsection}{1em}{}
\maketitle
\Addresses

\begin{abstract}
For every positive integer $n$, we find a complete classification for planar graphs according to the collection of numbers of common neighbours for every $n$-tuple of distinct vertices. Our results expand the literature on planar graphical degree sequences, that have recently been the object of renewed attention. Here we completely settle the version with no multiplicities of the vast problem of planar graphical $n$-degree sequences.
\end{abstract}
{\bf Keywords:} Planar graph, Vertex neighbours, Common neighbours, Classification, Vertex degree, Degree sequence, Polyhedron, $3$-polytope, Outerplanar graph, Graph transformation.
\\
{\bf MSC(2020):} 05C10, 05C75, 05C69, 05C07, 52B05, 05C40, 05C85.


\section{Introduction}
\subsection{Common neighbours}

Everywhere $G=(V,E)$ is a simple, finite graph on $p=|V|$ vertices and $q=|E|$ edges. The neighbourhood of $u\in V(G)$ is the set of vertices adjacent to $u$,
\[N(u)=N_G(u):=\{v\in V(G) : vu\in E(G)\}\]
(throughout we may suppress dependencies on $G$ when there is no risk of ambiguity). Its cardinality $|N(u)|$ is the degree of $u$. The maximal vertex degree will be denoted by $\Delta=\Delta(G)$.

The {\em common neighbourhood} of $n$ distinct vertices $u_1,u_2,\dots,u_n$ is the set of vertices adjacent to all of them,
\[N(u_1,u_2,\dots,u_n):=\{v\in V(G) : vu_1,vu_2,\dots,vu_n\in E(G)\}.\]

\begin{defin}
	Let $G$ be a graph and $n$ a positive integer. We define
	\begin{equation}
		\label{eq:A}
		A_n=A_n(G):=\{a : \exists \text{ distinct } u_1,u_2,\dots,u_n\in V(G) \text{ satisfying } |N(u_1,u_2,\dots,u_n)|=a\}.
	\end{equation}
\end{defin}

Note that $A_1(G)$ is simply the collection of distinct vertex degrees in $G$. The case $n=2$ was introduced by the author in \cite{maffucci2025classification}, by classifying the planar, $3$-connected (i.e., polyhedral) graphs according to their set $A_2$.

In this paper, for every positive integer $n$, we classify all planar graphs according to their set $A_n$. We also classify the subclasses of polyhedra and outerplanar graphs according to $A_n$.

The planar, $3$-connected graphs are called polyhedra or $3$-polytopes since they are the $1$-skeleta of polyhedral solids. Further motivation for studying this intriguing class of graphs may be found in \cite[Introduction]{maffucci2025regularity}. Outerplanar graphs are graphs admitting a planar embedding where one region contains all vertices (for instance, all forests and all cycles are outerplanar). A (non-trivial) outerplanar graph always has at least two vertices of degree at most $2$ and in particular its connectivity is at most $2$ \cite[Corollary 11.9(a)]{harary}. We will focus also on this subclass since it is of general interest, and it arises naturally in one of our constructions (Lemma \ref{le:vast} to follow).

When classifying planar graphs according to $A_n$, we may restrict our attention to planar, connected graphs. Indeed, if $G_1,G_2$ are planar, connected graphs and $G=G_1\cup G_2$ (i.e., $G$ is the disjoint union of $G_1,G_2$), then
\[A_1(G)=A_1(G_1)\cup A_1(G_2)\]
and for every $n\geq 2$ we have
\[A_n(G)=\{0\}\cup A_n(G_1)\cup A_n(G_2).\]

In the following two sections, we will state our findings on the classification of planar graphs according to $A_n$, for $n\geq 3$ and for $n=2$ respectively. The case $n=1$ is relatively straightforward and is relegated to Appendix \ref{app:c}.

\subsection{Common neighbours of three or more vertices}

We denote by $P_m$ the simple path on $m\geq 2$ vertices. If $G_1,G_2$ are graphs, we denote by $G_1+G_2$ the operation of graph addition, defined by
\[V(G_1+G_2)=V(G_1)\cup V(G_2), \qquad E(G_1+G_2)=E(G_1)\cup E(G_2)\cup\{u_1u_2 : u_1\in V(G_1),\ u_2\in V(G_2)\}.\]
The notations $K_\ell$, and $K_{\ell_1,\ell_2}$ denote the complete graphs and complete bipartite graphs respectively.

Let
\begin{equation}
	\label{eq:tm}
\ct_m:=\{H+K_2 : H\text{ is a subgraph of }P_m\text{ with no isolated vertices}\}
\end{equation}
and
\[\ct:=\bigcup_{m\geq 2}\ct_m.\]
For instance, we have the family of polyhedra
\[T_m:=P_{m}+K_2,\qquad m\geq 2\]
(Figure \ref{fig:c1}). We note that $\ct_2=\{T_2\}$ where $T_2=K_4$ is the tetrahedron, and $\ct_3=\{T_3\}$ where $T_3$ is the triangular bipyramid.
\begin{figure}[ht]
	\centering
	\begin{subfigure}{0.32\textwidth}
		\centering
		\includegraphics[width=2.75cm]{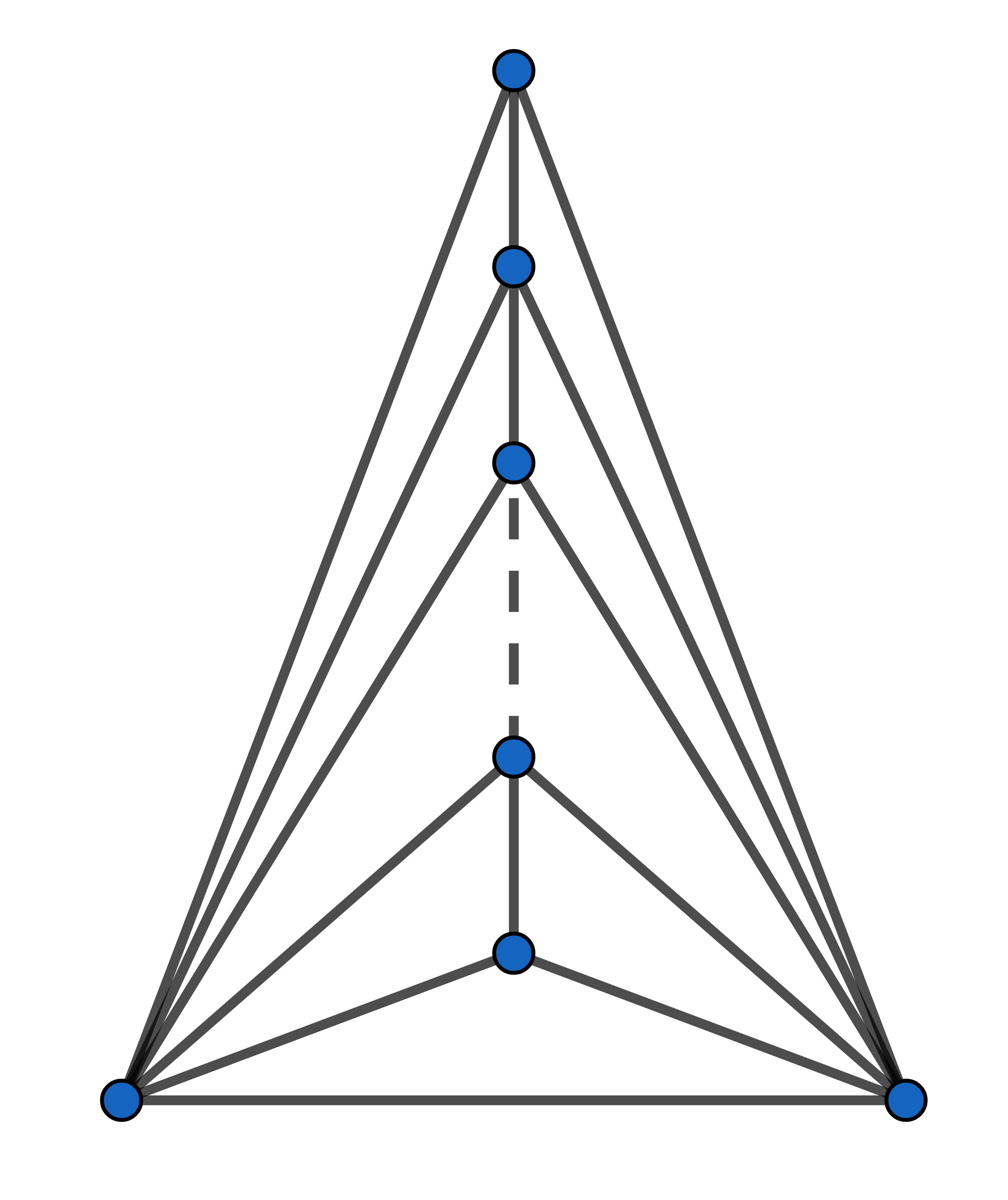}
		\caption{$T_m$, $m\geq 2$.}
		\label{fig:c1}
	\end{subfigure}
	\hfill
	\begin{subfigure}{0.32\textwidth}
		\centering
		\includegraphics[width=3.cm]{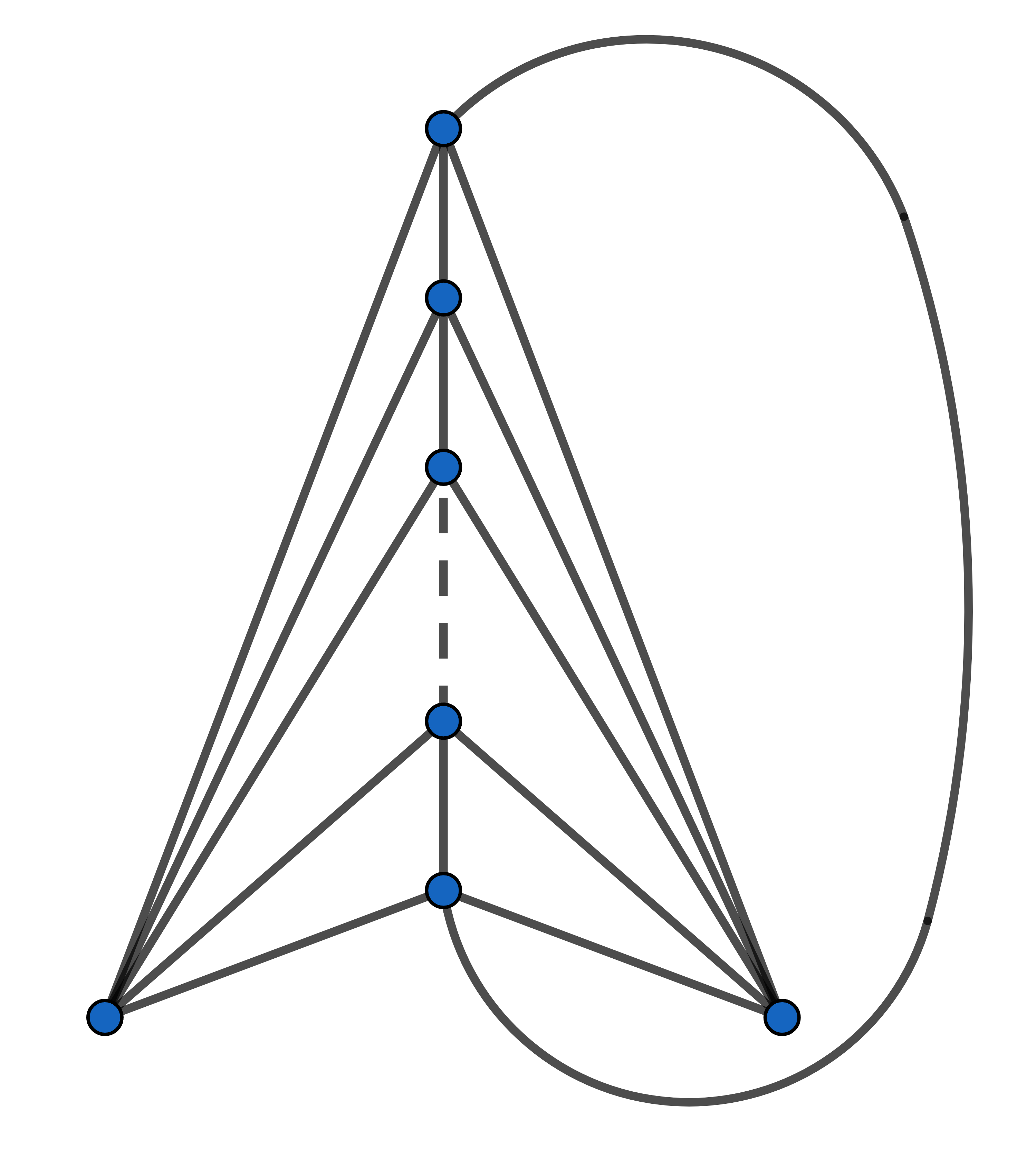}
		\caption{$B_\ell$, $\ell\geq 3$.}
		\label{fig:c2}
	\end{subfigure}
	\hfill
	\begin{subfigure}{0.32\textwidth}
		\centering
		\includegraphics[width=2.75cm]{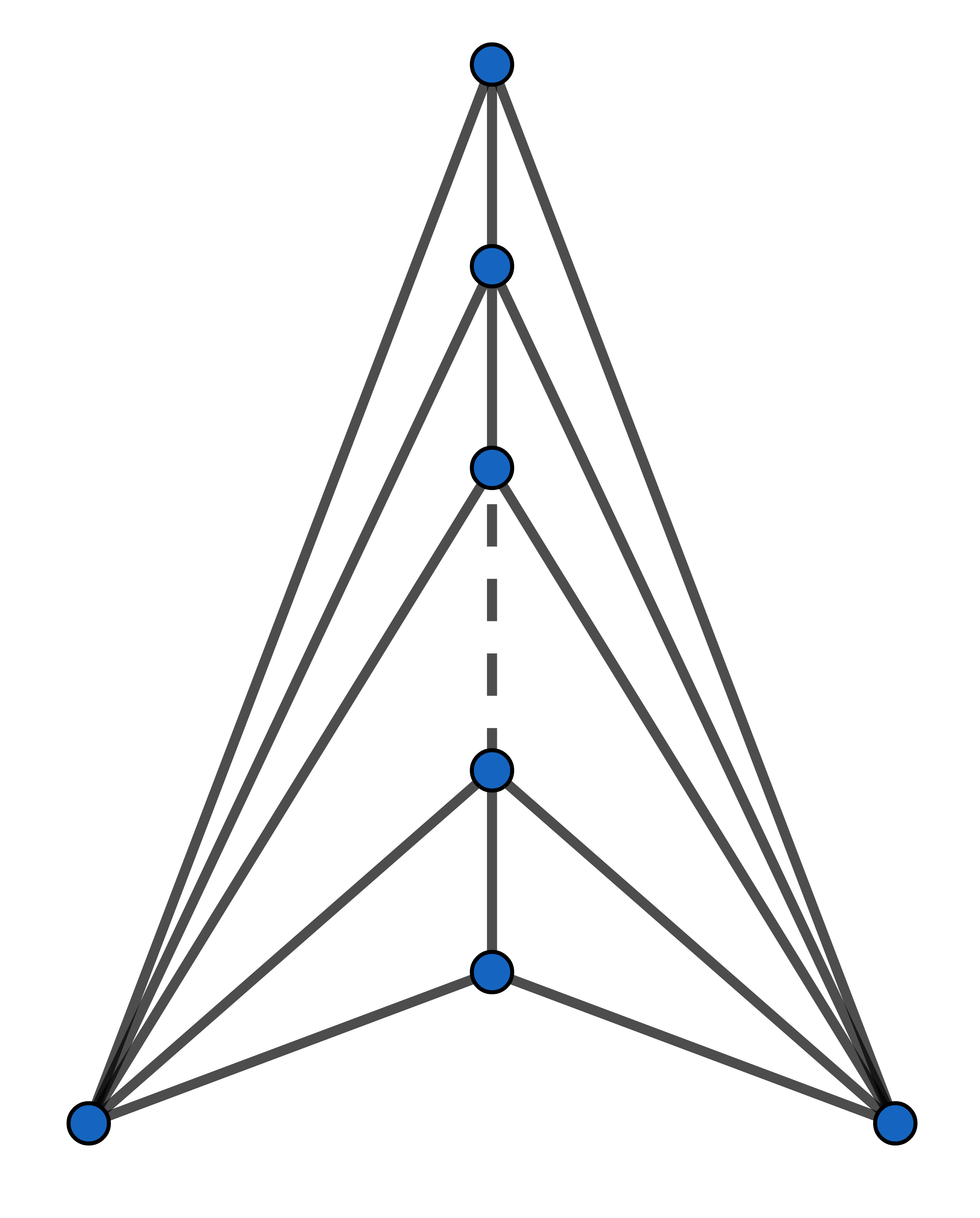}
		\caption{$B'_\ell$, $\ell\geq 3$.}
		\label{fig:c3}
	\end{subfigure}
	\caption{The families of polyhedra $T_m,B_\ell,B'_\ell$.}
	\label{fig:c}
\end{figure}

For every $m\geq 3$, let
\begin{align}
	\label{eq:qm}
\cq_m:=\{&\text{Planar graphs where }m\leq\Delta\text{ is the minimum integer s.t.\ for every vertex }v\text{ of degree at}\notag\\&\text{least }m\text{ there exists another vertex }w\text{ with same neighbourhood as }v\},
\end{align}
and
\[\cq:=\bigcup_{m\geq 3}\cq_m.\]
Note that a member of $\cq_m$ need not have any vertex of degree exactly $m$.

As an illustration of $\cq$, we will now construct two families of polyhedral graphs that will also be of use later. The notation $\overline{G}$ means the complement of the graph $G$. For $\ell\geq 3$ we define
\[B_\ell:=C_{\ell}+\overline{K_2}\quad
(\text{bipyramids}),\qquad\text{and}\\\qquad B'_\ell:=P_{\ell}+\overline{K_2}
\]
(Figures \ref{fig:c2} and \ref{fig:c3}). The octahedron $B_4$ satisfies $B_4\in\cq_3$, while $B_\ell,B'_\ell\in\cq_5$ for every $\ell\geq 5$. Note that the triangular bipyramid $B_3=T_3$, the square pyramid $B'_3$, and the graph $B'_4$ are not members of $\cq$. Further details will follow in Appendix \ref{app:a}.

We may now define an exceptional class of graphs
\begin{equation}
	\label{eq:ce}
	\ce:=\ct\cup\cq\cup\{S_5,S_7\},
\end{equation}
with $S_5,S_7$ as in Figure \ref{fig:s} (maintaining the notation of \cite[Figure 2]{maffucci2025classification} for the readers' convenience).
\begin{figure}[ht]
	\centering
	\begin{subfigure}{0.48\textwidth}
		\centering
		\includegraphics[width=2.75cm]{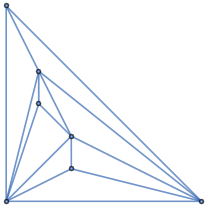}
		\caption{$S_5$.}
		\label{fig:s5}
	\end{subfigure}
	\hfill
	\begin{subfigure}{0.48\textwidth}
		\centering
		\includegraphics[width=3.cm]{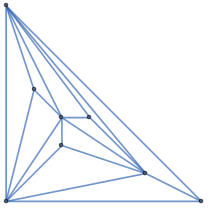}
		\caption{$S_7$.}
		\label{fig:s7}
	\end{subfigure}
	\caption{The exceptional graphs $S_5,S_7$.}
	\label{fig:s}
\end{figure}

We are ready to state our first main result.
\begin{thm}
	\label{thm:1}
	Let $G$ be a planar graph and $n\geq 3$ an integer. Then
	\begin{equation*}
		A_n(G)=
		\begin{cases}
			\{1\} & n=3 \text{ and } G\simeq K_4,\\
			\{1,2\} & n=3 \text{ and } G\in\ct_m\cup\{S_5,S_7\},\ m\geq n,\\
			\{0,2\} & G\in \cq_m,\ m\leq n,\\
			\emptyset & |V(G)|<n,\\
			\{0\} & \text{else if } \Delta(G)<n,\\
			\{0,1,2\} & \text{else if } G \text{ contains } K_{2,n},\\
			\{0,1\} & \text{otherwise}.
		\end{cases}
	\end{equation*}
	
\end{thm}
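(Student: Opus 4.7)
The plan is to use planarity ($K_{3,3}$-freeness) to bound $A_n\subseteq\{0,1,2\}$ for $n\geq 3$, and then classify the membership of each of $0,1,2$ separately. Two trivial cases are handled at once: $|V(G)|<n$ leaves no $n$-tuple, so $A_n=\emptyset$; if $|V(G)|\geq n$ and $\Delta(G)<n$ no vertex can be a common neighbour of $n$ distinct others, so $A_n=\{0\}$. For the planarity bound, if three distinct $v_1,v_2,v_3$ all belong to $N(u_1,\ldots,u_n)$, picking any three of the $u_i$'s together with $v_1,v_2,v_3$ gives $K_{3,3}\subseteq G$, contradicting planarity. A direct consequence is $2\in A_n\iff K_{2,n}\subseteq G$: a copy of $K_{2,n}$ yields an $n$-tuple with (at least, hence exactly) two common neighbours, and the converse is immediate.

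Assume henceforth $|V(G)|\geq n$ and $\Delta(G)\geq n$, and consider when $1\in A_n$. Take a vertex $v$ of maximum degree and any $n$-subset $\{w_1,\ldots,w_n\}\subseteq N(v)$; then $v\in N(w_1,\ldots,w_n)$ and the size is $1$ or $2$. The size is $2$ for every such subset iff each admits a second common neighbour; varying the subset and using the Euler-type bound $e\leq 2|V|-4$ for bipartite planar graphs forces these second neighbours to collapse to a single twin vertex $v'$ with $N(v')=N(v)$. Repeating at every vertex of degree $\geq n$, the condition $1\notin A_n$ matches exactly the definition of $\cq_m$ for some $m\leq n$. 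In that case $A_n=\{0,2\}$: the twin pair supplies $K_{2,n}\subseteq G$ (so $2\in A_n$), and an $n$-tuple avoiding the twin structure is shown to have empty common neighbourhood (so $0\in A_n$). Combining with the above, if additionally $K_{2,n}\not\subseteq G$ and $G$ is not among the exceptional classes for $0\notin A_n$ to follow, then $A_n=\{0,1\}$; if $K_{2,n}\subseteq G$ and none of the $\{1\},\{1,2\},\{0,2\}$ cases applies, then $A_n=\{0,1,2\}$.

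The main obstacle is the classification of when $0\notin A_n$. For the listed exceptional classes the verification is routine: in $K_4$ every triple is completed by the fourth vertex; in any $H+K_2\in\ct_m$ (with $m\geq n=3$) the apex pair $\{x,y\}$ of the $K_2$ covers all triples disjoint from $\{x,y\}$, while triples meeting $\{x,y\}$ are covered by the remaining apex together with a path-neighbour in $H$; and $S_5,S_7$ are dispatched by inspection. For the converse, in the case $n\geq 4$ I would argue that $|V|\geq n$, $\Delta\geq n$, $0\notin A_n$ together force a forbidden Kuratowski subgraph — the many $n$-subsets of a maximum-degree vertex's closed neighbourhood that must share common neighbours lead to a $K_5$- or $K_{3,3}$-like saturation — so no planar $G$ satisfies all three hypotheses. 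The delicate case is $n=3$: using that every triple of vertices admits a common neighbour, together with planarity, I would first show that $G$ must contain two universal vertices $x,y$ whenever $|V(G)|$ is sufficiently large (via a neighbour-counting argument combined with Kuratowski), and then observe that $G-\{x,y\}$ must be a subgraph of some path $P_m$ (since triples of the form $\{x,y,w\}$ impose path-like constraints on $H=G-\{x,y\}$), yielding $G\in\ct$. The finitely many small-order cases then leave $K_4$, $S_5$, and $S_7$ as the residual exceptions.
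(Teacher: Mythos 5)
Your overall architecture is the same as the paper's: bound $A_n\subseteq\{0,1,2\}$ by $K_{3,3}$-freeness, dispose of the cases $|V|<n$ and $\Delta<n$, note $2\in A_n\iff K_{2,n}\subseteq G$, and reduce everything to two characterizations, namely when $1\notin A_n$ and when $0\notin A_n$. However, both of those characterizations --- which are the entire substance of the theorem --- are only gestured at, and the hints you give would not carry the argument. For $1\notin A_n$, you claim that ``varying the subset and using the Euler-type bound $e\leq 2|V|-4$ forces these second neighbours to collapse to a single twin vertex $v'$ with $N(v')=N(v)$.'' That edge bound does not yield this. The paper's Lemma 2.3 proves the twin statement by a genuine case analysis: for $\ell\geq 4$ it uses planarity to show $w_1,\dots,w_{\ell-1}$ admit no third common neighbour and derives a contradiction from $1\notin A_n$; for $\ell=n=3$ with $xy\notin E(G)$ it runs an infinite-descent argument producing an infinite nested sequence of triangles $z_i,w_1,w_2$; and the residual case where the second common neighbour of $x$ and each pair $w_i,w_j$ is $y$ itself (i.e.\ $z_1=y$) is killed by exhibiting a subdivision of $K_{3,3}$. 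None of these steps is implied by a counting inequality, and the last two subcases are exactly where a naive ``the second neighbours must coincide'' argument breaks.

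The second and larger gap is the case $0\notin A_3$. Your plan --- ``show that $G$ must contain two universal vertices whenever $|V(G)|$ is sufficiently large (via a neighbour-counting argument combined with Kuratowski)'' --- restates the conclusion rather than proving it, and the threshold is itself delicate: $S_5$ and $S_7$ have $7$ and $8$ vertices and do \emph{not} have two dominating vertices, so ``sufficiently large'' must mean at least $9$, and establishing that bound is essentially the whole of Proposition 2.2. The paper's route is quite different and considerably more structured: first $2$-connectivity (so faces are bounded by cycles), then a proof that every face of every plane immersion is a triangle or a quadrangle, then a dichotomy --- if some face is quadrangular one deduces $G\in\ct_m$ directly, while if $G$ is a triangulation one builds a tetrahedron $a,b,c,d$, analyses how the remaining vertices distribute among its four faces, and recovers $T_m$, $S_5$ or $S_7$ (invoking an external construction for the subcase of at most one vertex per face). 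Your ``path-like constraints on $G-\{x,y\}$'' step is fine once two dominating vertices exist, but that existence is precisely what is missing. As written, the proposal is a correct outline of \emph{what} must be proved, with the two key lemmas unproved.
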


Recall that $p$ is the order of $G$ and $\Delta$ the maximal vertex degree. We define
\begin{equation}
	\label{eq:L}
	L=L(G):=\max\{\ell\in\mathbb{N} : G \text{ contains } K_{2,\ell}\}.
\end{equation}
Note that every graph satisfies
\[L\leq\Delta<p.\]

Defining
\begin{equation}
	\label{eq:fa}
	\fa=\fa(G):=\{A_3(G),A_4(G),\dots,A_n(G),\dots\}
\end{equation}
we may rephrase Theorem \ref{thm:1} more explicitly as follows.

\begin{thm}
	\label{thm:2}
	Let $G\not\in\ce$ be a planar graph. Then
	\begin{equation}
		\label{eq:Agen}
		\fa(G)=\{\{0,1,2\},\{0,1,2\},\dots,\underbrace{\{0,1,2\}}_{A_L},\{0,1\},\{0,1\},\dots,\{0,1\},\underbrace{\{0\}}_{A_{\Delta+1}},\{0\},\dots,\underbrace{\{0\}}_{A_p},\emptyset,\emptyset,\dots,\emptyset,\dots\}.
	\end{equation}
	For the exceptional class $\ce$ one has the following. For $G\in\ct_m$, $m\geq 3$,
	\begin{equation}
		\label{eq:AT}
		\fa(G)=\{\underbrace{\{1,2\}}_{A_{3}},\{0,1,2\},\{0,1,2\},\dots,\{0,1,2\},\underbrace{\{0,1\}}_{A_{m+1}},\{0\},\emptyset,\emptyset,\dots,\emptyset,\dots\}.
	\end{equation}
	For $G\in\cq_m$, $m\geq 3$,
	\begin{equation}
		\label{eq:AQ}	
		\fa(G)=\{\{0,1,2\},\{0,1,2\},\dots,\{0,1,2\},\underbrace{\{0,2\}}_{A_m},\{0,2\},\dots,\underbrace{\{0,2\}}_{A_\Delta},\{0\},\{0\},\dots,\underbrace{\{0\}}_{A_p},\emptyset,\emptyset,\dots,\emptyset,\dots\}.
	\end{equation}
	For the remaining cases,
	\begin{align}
		\label{eq:Aexc}
		\fa(K_4)&=\{\{1\},\{0\},\emptyset,\emptyset,\dots,\emptyset,\dots\},\notag\\
		\fa(S_5)&=\{\{1,2\},\{0,1,2\},\{0,1\},\{0,1\},\{0\},\emptyset,\emptyset,\dots,\emptyset,\dots\}\notag\\
		\fa(S_7)&=\{\{1,2\},\{0,1,2\},\{0,1\},\{0,1\},\{0\},\{0\},\emptyset,\emptyset,\dots,\emptyset,\dots\}.
	\end{align}
\end{thm}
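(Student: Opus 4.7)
The plan is to derive Theorem \ref{thm:2} directly from Theorem \ref{thm:1} by unfolding its seven-way case split as a function of $n$, separately for each graph family in the statement. There is no essentially new combinatorial input: the proof amounts to bookkeeping of the parameters $p, \Delta, L$ together with confirmation that the classes $\ct, \cq, \{K_4, S_5, S_7\}$ do not overlap in ways that would trigger unintended cases of Theorem \ref{thm:1}.

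For generic planar $G \notin \ce$, the first three cases of Theorem \ref{thm:1} never trigger, so the remaining four depend only on comparing $n$ to $L, \Delta, p$; by the definition (\ref{eq:L}) of $L$, the containment $K_{2,n} \subseteq G$ is equivalent to $n \leq L$, and (\ref{eq:Agen}) is immediate. For $G \in \ct_m$ with $m \geq 3$, writing $G = H + K_2$ with $H$ spanning $m$ vertices one computes $p = m+2$ and $\Delta = m+1$, attained at the two $K_2$-vertices $x,y$, together with $|N(x,y)| = m$, so $K_{2,m} \subseteq G$ but $K_{2,m+1} \not\subseteq G$; Theorem \ref{thm:1} then yields $A_3 = \{1,2\}$ (Case 2), $A_n = \{0,1,2\}$ for $4 \leq n \leq m$ (Case 6), $A_{m+1} = \{0,1\}$ (Case 7), $A_{m+2} = \{0\}$ (Case 5), and $A_n = \emptyset$ for $n > m+2$, matching (\ref{eq:AT}). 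For $G \in \cq_m$ with $m \geq 3$, the defining property of $\cq_m$ supplies twin vertices $v \neq w$ of degree $\geq m$ with $N(v) = N(w)$, whence $K_{2,m} \subseteq G$ and hence $K_{2,n} \subseteq G$ for all $n \leq m$; Case 6 then gives $A_n = \{0,1,2\}$ for $3 \leq n < m$, Case 3 gives $\{0,2\}$ for $m \leq n \leq \Delta$, and Cases 5, 4 handle the tails, matching (\ref{eq:AQ}).

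The step most in need of justification, and the main obstacle, is ensuring that the cases of Theorem \ref{thm:1} fire in the intended order---in particular that Cases 1, 2 do not apply when $G \in \cq_m$. One checks $K_4 \notin \cq$ (all pairs of $K_4$-vertices have distinct neighbourhoods), $\ct \cap \cq = \emptyset$ (in $H + K_2$ the vertices $x,y$ have neighbourhoods differing by the transposition of $\{x,y\}$, precluding a twin at the maximum degree), and $S_5, S_7 \notin \cq$ directly from Figure \ref{fig:s}. The three exceptional graphs $K_4, S_5, S_7$ are then handled by direct inspection: $K_4$ gives (\ref{eq:Aexc}) immediately from Case 1 together with Cases 4, 5, while for $S_5$ and $S_7$ one reads off $p, \Delta, L$ from Figure \ref{fig:s}, applies Case 2 to obtain $A_3 = \{1,2\}$, and fills in the remaining entries via the generic dichotomy. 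The only small verification is that $0 \notin A_3$ for $S_5$ and $S_7$, a short finite check from the figures.
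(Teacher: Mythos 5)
Your bookkeeping is accurate as far as it goes: the parameter computations for $\ct_m$ (namely $p=m+2$, $\Delta=m+1$ attained only at the two apices for $m\geq 4$, and $K_{2,m}\subseteq G$ but $K_{2,m+1}\not\subseteq G$), the observation that a maximum-degree vertex of a member of $\cq_m$ has a twin and hence supplies $K_{2,n}$ for all $n\leq\Delta$, the disjointness checks $K_4,S_5,S_7\notin\cq$ and $\ct\cap\cq=\emptyset$, and the translation of the last four cases of Theorem \ref{thm:1} into the parameters $L,\Delta,p$ are all correct (modulo the small point that Case~3 of Theorem \ref{thm:1} carries no explicit upper bound on $n$, so one must give Cases~4--5 precedence over it for $n>\Delta$, which you implicitly do). If Theorem \ref{thm:1} were an independently established black box, this would be a complete verification of \eqref{eq:Agen}--\eqref{eq:Aexc}.

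The difficulty is that within this paper it is not such a black box, and your argument is circular relative to the paper's development. Section \ref{sec:2} proves the two theorems simultaneously, with the logical flow running in the opposite direction to yours: the paper first verifies \eqref{eq:AT} and \eqref{eq:Aexc} by direct computation on $\ct_m$, $K_4$, $S_5$, $S_7$; then establishes the three substantive ingredients --- Lemma \ref{le:no0} ($0\in A_n$ whenever $A_n\neq\emptyset$ and $n\geq 4$), Proposition \ref{prop:no03} (a planar graph on at least three vertices with $0\not\in A_3$ lies in $\ct\cup\{S_5,S_7\}$), and Lemma \ref{le:02} ($A_n=\{0,2\}$ forces $G\in\cq_m$ for some $m\leq n$) --- and uses these to verify \eqref{eq:AQ} and \eqref{eq:Agen}; only at that point is Theorem \ref{thm:1} declared proved, as a consequence of these verifications. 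All of the genuine content of the statement --- why $0$ belongs to $A_n(G)$ for every $G\not\in\ct\cup\{S_5,S_7\}$ with $A_n(G)\neq\emptyset$, why the value set $\{0,2\}$ occurs only for the classes $\cq_m$, and why $0\not\in A_3$ pins $G$ down to $\ct\cup\{S_5,S_7\}$ --- is packed into the cases of Theorem \ref{thm:1} that you invoke without proof. A proof of Theorem \ref{thm:2} needs to supply these three results (or equivalents); your proposal supplies only the rephrasing layer that the paper treats as immediate.
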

Theorems \ref{thm:1} and \ref{thm:2} will be proven in Section \ref{sec:2}.

We specify that in \eqref{eq:Agen} if $L\leq 2$, then $\{0,1,2\}\not\in\fa$, and if $\Delta\leq 2$, then $\{0,1\}\not\in\fa$. Similarly, in \eqref{eq:AT} and \eqref{eq:AQ} if $m=3$, then $\{0,1,2\}\not\in\fa$.

\subsection{Common neighbours of a pair of vertices}

We denote by
\begin{equation}
	\label{eq:D}
D_\ell,\qquad\ell\geq 1
\end{equation}
the graph formed by $\ell$ triangles with a common vertex. We also define
\begin{equation}
	\label{eq:Tp}
T'_\ell \simeq \left(\bigcup_{i=1}^{\ell/2}K_2\right)+K_2\in\ct_\ell,\qquad\text{even }\ell\geq 2.
\end{equation}
Note that $T'_2=T_2$ and $\ct_4=\{T_4,T'_4\}$.

In Figure \ref{fig:ee} we sketch five polyhedra from \cite[Figure 2]{maffucci2025classification} and a planar, $2$-connected graph $S'$.
\begin{figure}[ht]
	\centering
	\begin{subfigure}{0.32\textwidth}
		\centering
		\includegraphics[width=2.25cm]{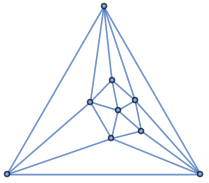}
		\caption{$S_3$.}
		\label{fig:s3}
	\end{subfigure}
	\hfill
	\begin{subfigure}{0.32\textwidth}
		\centering
		\includegraphics[width=2.25cm]{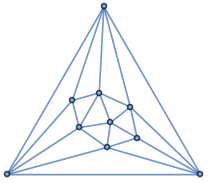}
		\caption{$S_4$.}
		\label{fig:s4}
	\end{subfigure}
	\hfill
	\begin{subfigure}{0.32\textwidth}
		\centering
		\includegraphics[width=2.25cm]{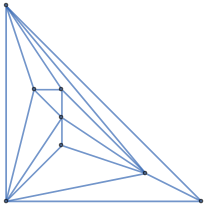}
		\caption{$S_6$.}
		\label{fig:s6}
	\end{subfigure}
	\hfill
	\begin{subfigure}{0.32\textwidth}
		\centering
		\includegraphics[width=2.25cm]{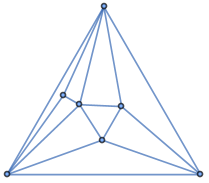}
		\caption{$S_8$.}
		\label{fig:s8}
	\end{subfigure}
	\hfill
	\begin{subfigure}{0.32\textwidth}
		\centering
		\includegraphics[width=2.25cm]{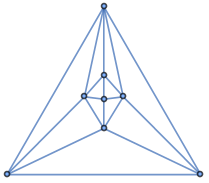}
		\caption{$S_9$.}
		\label{fig:s9}
	\end{subfigure}
	\hfill
	\begin{subfigure}{0.32\textwidth}
		\centering
		\includegraphics[width=2.25cm]{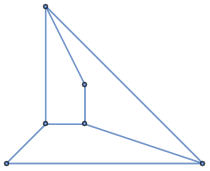}
		\caption{$S'$.}
		\label{fig:s11}
	\end{subfigure}
	\caption{The polyhedra $S_3,S_4,S_6,S_8,S_9$ and the planar, $2$-connected graph $S'$.}
	\label{fig:ee}
\end{figure}

We may now define an exceptional class of graphs
\[
\ce':=\ct\cup\{B_\ell\}_{\ell\geq 3}\cup\{S_\ell\}_{3\leq\ell\leq 9}.
\]

\begin{thm}
	\label{thm:3}
Let $G\not\in\ce'$ be a planar graph containing a $4$-cycle. Then $A_2(G)$ contains $2$ and at least one of $0,1$. Conversely, let $A'$ be a (possibly empty) set of integers $\geq 3$. Then we may construct a vast class of planar, connected graphs satisfying
\[A_2(G)=\{1,2\}\cup A',\]
a vast class of planar, connected graphs satisfying
\[A_2(G)=\{0,1,2\}\cup A',\]
and for $A'\neq\emptyset,\{3\}$, a vast class of planar, connected graphs satisfying
\[A_2(G)=\{0,2\}\cup A'.\]
The full classification of planar, connected graphs $G$ according to $A_2(G)$ may be found in Table \ref{t:1}.
\begin{table}[ht]
	\centering
	$\begin{array}{|c|c|}
		\hline A_2(G)&G\\
		\hline \emptyset&K_1\\
		\hline \{0\}&K_2\\
		\hline \{2\}&T_2=K_4\\
		\hline \{2,3\}&T_3=B_3, S_3\\
		\hline \{2,4\}&B_4,T'_4\\
		\hline \{2,\ell\},\text{ even }\ell\geq 6&T'_\ell\\
		\hline \{2,3,4\}&T_4,S_5,S_6,S_7,S_8,S_{9}\\
		\hline \{2,3,\ell\}, \ \ell\geq 5&G\simeq B_\ell\ \text{ or }\ G\in\ct_{\ell},\ G\not\simeq T'_\ell\\
		\hline \{0,2\}&\text{cube, icosahedron, square}\\
		\hline \{0,2,3\}&S_4,K_{2,3},S'\\
		\hline \{0,2\}\cup A',\ A'\neq\emptyset,\{3\}&\text{infinitely many solutions for each }A'\\
		\hline \{1\}&D_\ell,\ \ell\geq 1\\
		\hline \{0,1\}&\text{no $4$-cycles},\ G\not\simeq D_\ell,K_1,K_2\\
		\hline \{1,2\}\cup A'&\text{infinitely many solutions for each }A'\\
		\hline \{0,1,2\}\cup A'&\text{infinitely many solutions for each }A'\\
		\hline
	\end{array}$
	\caption{Classification of planar, connected graphs $G$ according to $A_2(G)$.}
	\label{t:1}
\end{table}

\end{thm}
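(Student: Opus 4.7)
The plan is to establish Theorem \ref{thm:3} in three stages: the structural forward direction for graphs outside $\ce'$; the three ``vast class'' existence constructions; and the row-by-row verification of Table \ref{t:1}.

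\textbf{Forward direction.} If $G$ contains a $4$-cycle $u_1 v_1 u_2 v_2$, then the pair $\{u_1,u_2\}$ has $|N(u_1,u_2)| \geq 2$, so $2 \in A_2(G)$ immediately. To prove $\{0,1\} \cap A_2(G) \neq \emptyset$ whenever $G \notin \ce'$, I would argue by contrapositive: assume every pair of distinct vertices shares at least two common neighbours. This saturation hypothesis is very restrictive (it forces diameter $\leq 2$ and every non-edge to lie in at least two $4$-cycles). I would split on the connectivity of $G$. If $G$ is $3$-connected (polyhedral), the conclusion follows from the classification in \cite{maffucci2025classification}, which pins down the polyhedral members of $\ce'$ (the bipyramids $B_\ell$, the sporadic $S_3,\ldots,S_9$, and the polyhedral members of $\ct$). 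The novel case is connectivity $\leq 2$: a cut vertex $c$ would yield a pair $(v,w)$ in distinct components of $G-c$ whose common neighbours lie in $\{c\}$, forcing $1 \in A_2$; and a $2$-cut $\{a,b\}$ analogously forces $a,b$ to be universal vertices. Thus $G = H + K_2$ for some $H$; planarity of $H + K_2$ then restricts $H$ to be a disjoint union of paths, and a short additional check (ruling out isolated vertices by saturation) shows that $H$ is in fact a subgraph of some $P_m$ with no isolated vertices, so $G \in \ct \subseteq \ce'$.

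\textbf{Vast class constructions.} For each admissible $A' \subseteq \mathbb{Z}_{\geq 3}$, I would exhibit an infinite planar, connected family realising the prescribed $A_2$, packaged as the auxiliary Lemma \ref{le:vast} cited in the statement. The basic gadget is $K_{2,k}$, whose apex pair has exactly $k$ common neighbours, while all other pairs contribute only $0$ or $2$; gluing several such gadgets at a shared vertex or along a common edge, sufficiently ``spread out'' so as not to create spurious common-neighbourhood sizes, produces a graph whose $A_2$ is the union of the apex contributions. For $\{1,2\} \cup A'$ the core is a friendship windmill $D_\ell$ (contributing the value $1$) with one $K_{2,k}$ gadget attached at the centre for each desired $k \in A' \cup \{2\}$. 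For $\{0,1,2\} \cup A'$ one additionally attaches a pendant edge or a distant vertex to force the value $0$. For $\{0,2\} \cup A'$ the core must already avoid the value $1$, so one starts from a triangle-poor assembly such as a cube-like or iterated $K_{2,k}$ construction. I expect the main obstacle to be precisely the delicate case $\{0,2\} \cup A'$: producing rich common-neighbourhood values while avoiding $1$ is easy once $A'$ contains some integer beyond $3$, but impossible to extend vastly for $A' = \{3\}$, where the only solutions are the rigid triple $S_4, K_{2,3}, S'$ --- this rigidity is exactly what forces the exclusion in the statement.

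\textbf{Classification table.} With the forward direction and the vast constructions in hand, every remaining row of Table \ref{t:1} is a finite list. For each such row I would (i) verify $A_2(G)$ on each listed graph by direct computation, and (ii) establish exhaustiveness. The ``saturated'' rows (those lying in $\{2,3,\ldots\}$) are exhausted by refining the forward direction according to which exact values $|N(u,w)|$ occur inside each member of $\ce'$: for instance, in $T_m \in \ct_m$ the two apex vertices share $m$ common neighbours while the rest share $2$ or $3$, yielding the parametrised family $\{2,3,m\}$, whereas $T'_\ell$ contributes $\{2,\ell\}$ for even $\ell$. The row $\{1\}$ is the planar case of the friendship theorem and yields exactly the windmills $D_\ell$. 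The most delicate remaining row is $\{0,2\}$: here I would use the constraint that every edge must lie in either $0$ or exactly $2$ triangles (else $1 \in A_2$), and combine this with Euler's formula in the plane to rule out all candidates apart from the square, the cube and the icosahedron; the row $\{0,2,3\}$ then follows by a short case analysis as a single-value perturbation, yielding precisely $S_4, K_{2,3}, S'$. Finally, the row $\{0,1\}$ is by construction the complement within $4$-cycle-free planar graphs of the rows $\emptyset, \{0\}, \{1\}$, giving $G \not\simeq D_\ell, K_1, K_2$ as required.
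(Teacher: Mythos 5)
Your overall architecture (forward direction via a connectivity split with the $3$-connected case cited from \cite{maffucci2025classification}, explicit constructions for the vast classes, then a row-by-row check of the table) matches the paper's, and the forward direction and the $\{0,2\}\cup A'$ chain of $K_{2,k}$'s are essentially the arguments the paper gives. However, there is a concrete gap in your construction for $A_2(G)=\{1,2\}\cup A'$. Take the windmill $D_\ell$ with centre $c$ and attach a copy of $K_{2,k}$ at $c$, so that $c$ is one apex, $y$ is the other apex, and $w_1,\dots,w_k$ are the leaves. In $K_{2,k}$ the two apexes are non-adjacent, so $N(y)=\{w_1,\dots,w_k\}$, while a rim vertex $u$ of the windmill has $N(u)=\{c,u'\}$. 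Hence $N(y,u)=\emptyset$, and likewise the far apexes $y_j,y_{j'}$ of two distinct gadgets satisfy $N(y_j,y_{j'})=\emptyset$. So $0\in A_2(G)$ in every instance of your construction (there is always at least one blade and at least the $K_{2,2}$ gadget), and you obtain $\{0,1,2\}\cup A'$ rather than $\{1,2\}\cup A'$; ``spreading out'' the gadgets only makes this worse. The fix is to force every vertex to be adjacent to the shared vertex, i.e.\ to make $c$ dominating --- which is exactly the paper's construction: take a forest (or outerplanar graph) $G'$ with $A_2(G')=\{0,1\}$ and degree set governed by $A'$, and form $G=G'+K_1$; then $|N_G(u,x)|=\deg_{G'}(u)$ for the apex $x$ and $|N_G(u,v)|=|N_{G'}(u,v)|+1\in\{1,2\}$ otherwise, with $0$ appearing precisely when $G'$ has an isolated vertex.

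Two smaller points. First, in the $2$-cut case of the forward direction you assert that saturation makes the two cut vertices universal and that $G=H+K_2$; this needs the intermediate step (which the paper proves via the quadrangles $x,w_i,y,w_{i+1}$) that no vertex lies outside $\{x,y\}\cup N(x,y)$, and the step that $xy\in E(G)$ --- both are short but not automatic. Second, where you diverge usefully from the paper: invoking the friendship theorem for the row $\{1\}$ is legitimate and shorter than the paper's self-contained planar argument (Lemma \ref{le:1}), whereas re-deriving the row $\{0,2\}$ from Euler's formula redoes work that the paper simply imports from the polyhedral classification; the genuinely new content there is only the connectivity-$2$ analysis (Lemma \ref{le:023}).
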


Theorem \ref{thm:3} will be proven in Section \ref{sec:3}.

\subsection{Discussion}
\label{sec:disc}

The problem of classifying planar graphs according to $A_1(G)$ is the version without multiplicities of the classical problem of determining which degree sequences are planar graphical. Given a graph $G$, we may consider its {\em degree sequence}
\[\sigma: d_1,d_2,\dots,d_p,\]
collecting the vertex degrees of $G$ in weakly decreasing order. We say that $G$ is a realisation of $\sigma$. The classical problem of determining when a sequence is {\em graphical} (has at least one realisation) was solved in \cite{have55,hakimi,erdgal}. The authors of \cite{have55,hakimi} also established an algorithm to construct a realisation for every graphical sequence.

The difficult question of which sequences are {\em planar graphical} (i.e., admit a planar realisation) was posed in \cite{schhak}. This theory has recently received renewed attention \cite{adams2019planar,maffucci2024rao,bar2024sparse,bar2025approximate}. The problem of constructing planar graphs $G$ given $A_1(G)$ is the version without multiplicities of determining which degree sequences are planar graphical. Its solution may be found in Appendix \ref{app:c}.

We now generalise both the problem of planar graphical sequences, and the problem analysed in this paper of classifying planar graphs according to $A_n$, $n\geq 1$. We start by writing
\[V^{(n)}(G):=\{(u_1,u_2,\dots,u_n) : \text{ distinct }u_1,u_2,\dots,u_n\in V(G)\}.\]
For each positive integer $n$, we may call $n$-degree sequence of $G$ the list of non-negative integers
\begin{equation}
\label{eq:nds}
|N(u_1,u_2,\dots,u_{n})|, \qquad (u_1,u_2,\dots,u_{n})\in V^{(n)}(G),
\end{equation}
ordered such that  \eqref{eq:nds} is weakly decreasing. The case $n=1$ is the usual degree sequence of $G$. The subset $A_n$ is \eqref{eq:nds} without multiplicities. The problems of determining which $n$-degree sequences admit a realisation, whether there are few or many realisations, and how to construct them seem intriguing and non-trivial.

The related problem of {\em unigraphicity} \cite{koren1,li1975} is also of interest. We say that a sequence $\sigma$ is unigraphic with respect to a class of graphs $\cc$ if there is a unique $G\in\cc$ realising $\sigma$. In \cite{maffucci2024characterising}, the difficult problem of finding unigraphic sequences with respect to the class of polyhedra was posed, and the case of polyhedra with a dominating vertex was partially resolved. Equivalently, \cite{maffucci2024characterising} partially solves the problem of outerplanar, $2$-connected unigraphic sequences (this class may be obtained from the above by removing the dominating vertex). In \cite{maffucci2025faces}, we achieved the breakthrough of proving that unigraphic polyhedra may have face length at most $9$. The case of self-dual unigraphic polyhedra was solved in \cite{maffucci2026self}. Other problems on degree sequences of polyhedral graphs were studied in \cite{mafpo2,maffucci2023self}.

\paragraph{Preliminaries.}
A vertex of a graph is dominating (sometimes also called universal) if it is adjacent to every other vertex.
\\
A caterpillar is a tree graphs where if we remove all vertices of degree $1$, we are left with a simple path.
\\
Let $u\in V(G)$, and $N(u)=\{v_1,v_2,\dots,v_n\}$. The {\em open neighbourhood} of $u$
\[\Gamma_G(u)\]
is the subgraph of $G$ generated by $v_1,v_2,\dots,v_n$. The {\em closed neighbourhood} of $u$
\[\Gamma_G[u]\]
is the subgraph of $G$ generated by $u,v_1,v_2,\dots,v_n$.
\\
A planar graph considered together with a planar embedding is called a plane graph. Polyhedra admit a unique embedding up to selecting an external region \cite[Introduction]{maffucci2025regularity}, and this fact will be tacitly used throughout.
\\
In a plane graph $G$, we will use the notation
\[[u_1,u_2,\dots,u_n], \qquad n\geq 3\]
when $u_1,u_2,\dots,u_n$ is a cycle that delimits a region of $G$.

\paragraph{Plan of the paper.}
Section \ref{sec:2} is dedicated to the proofs of Theorems \ref{thm:1} and \ref{thm:2}. Section \ref{sec:3} is dedicated to the proof of Theorem \ref{thm:3}. In Appendix \ref{app:a}, we will classify all polyhedral graphs according to their set $A_n$, for every $n\geq 3$. In Appendix \ref{app:b}, we will classify all outerplanar graphs according to their set $A_n$, for every $n\geq 2$. In Appendix \ref{app:c}, we will classify all planar graphs according to their set $A_1$.

\section{Proof of Theorems \ref{thm:1} and \ref{thm:2}}
\label{sec:2}
Let $n\geq 3$ and $G$ be a planar graph. Since $G$ is planar, a subset of vertices of cardinality $3$ or more may have at most two common neighbours, otherwise $G$ would contain a subgraph isomorphic to $K_{3,3}$. Thus
\[A_n(G)\subseteq\{0,1,2\}.\]

\subsection{The exceptional class $\ct$}
One easily checks that the graphs $T_2=K_4,S_5,S_7$ satisfy \eqref{eq:Aexc}, and that the triangular bipyramid $T_3$ satisfies \eqref{eq:AT}. Now we will consider $G\in\ct_m$, $m\geq 4$ and show that it verifies \eqref{eq:AT}.

\paragraph{Case of $\ct_m$.} Let $G\in\ct_m$, $m\geq 4$. That is to say, $G=H+K_2$ where $H$ is a subgraph of the path $P_m$ with no isolated vertices. Call
\[w_1,w_2,\dots,w_m\]
the vertices along $P_m$ in order, and $x,y$ the remaining vertices of $G$. One has
\[N(x,y,w_1,w_2,\dots,w_{i})=\emptyset, \qquad 2\leq i\leq m,\]
thus $0\in A_n$ for $4\leq n\leq m+2$,
\[N(x,w_1,w_2,\dots,w_{i})=\{y\}, \qquad 2\leq i\leq m,\]
thus $1\in A_n$ for $3\leq n\leq m+1$, and
\[N(w_1,w_2,\dots,w_{i})=\{x,y\}, \qquad 2\leq i\leq m,\]
thus $2\in A_n$ for $2\leq n\leq m$. Thereby,
\[A_n(G)=\{0,1,2\}, \qquad 4\leq n\leq m\]
and
\[A_3(G)\supseteq\{1,2\}, \qquad 4\leq m.\]
On the other hand, let us check that $0\not\in A_3$. Indeed, any three elements of $P_m$ are all adjacent to $x,y$; $x$ and any two vertices of $P_m$ are all adjacent to $y$; $x,y$ and any vertex $w$ of $P_m$ are all adjacent to the neighbour(s) of $w$ along $P_m$ (recall that $w$ is not isolated in $H$). Thus for every $m\geq 4$ we have $A_3(G)=\{1,2\}$. We have verified \eqref{eq:AT}.

\subsection{Main arguments}
It remains to prove the generic case \eqref{eq:Agen} and the exceptional \eqref{eq:AQ}. The purpose of the present section is to establish three preparatory results. These are at the heart of the proof of Theorems \ref{thm:1} and \ref{thm:2}.
\begin{lemma}
	\label{le:no0}
Let $G$ be a planar graph, and $n\geq 4$ an integer. If $A_n(G)\neq\emptyset$, then $0\in A_n(G)$.
\end{lemma}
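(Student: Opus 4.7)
I would proceed by induction on $n\geq 4$. For the inductive step $n\geq 5$, applying the inductive hypothesis to $n-1\geq 4$ yields an $(n-1)$-subset $U'\subseteq V(G)$ with $N(U')=\emptyset$; extending by any $v\in V(G)\setminus U'$ (which exists since $|V(G)|\geq n$) gives an $n$-subset $U:=U'\cup\{v\}$ with $N(U)\subseteq N(U')=\emptyset$. So the task reduces to the base case $n=4$.

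For the base case, $A_4(G)\neq\emptyset$ forces $|V(G)|\geq 4$. The case $|V(G)|=4$ is immediate, as $N(V(G))=\emptyset$. Suppose $|V(G)|\geq 5$ and define $W:=\{v\in V(G) : \deg(v)\geq 4\}$, which contains every possible common neighbour of any $4$-subset. If $|W|\leq 4$, I take any $4$-subset $U\supseteq W$: for $w\in W$ we have $w\in U$ yet $w\notin N(w)$, so $U\not\subseteq N(w)$; for $w\notin W$ we have $|N(w)|<4=|U|$. Hence $N(U)=\emptyset$.

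The main difficulty is the case $|W|\geq 5$. Since planarity forbids $K_5$ as a subgraph, the set $W$ cannot be a clique, so I pick non-adjacent $w_1,w_2\in W$. Let $C:=N(w_1)\cap N(w_2)$ (note $w_1,w_2\notin C$). If $|C|\leq 2$, I take any $4$-subset $U$ with $\{w_1,w_2\}\cup C\subseteq U$: every candidate common neighbour $v$ lies in $N(w_1)\cap N(w_2)=C\subseteq U$, but $v\notin U$ (as $v$ is not its own neighbour), a contradiction. If $|C|\geq 3$, I pick three distinct $z_1,z_2,z_3\in C$ and set $U:=\{w_1,z_1,z_2,z_3\}$: by $K_{3,3}$-freeness the set $N(z_1)\cap N(z_2)\cap N(z_3)$ has at most two elements, but already contains $\{w_1,w_2\}$, so any common neighbour $v$ of $U$ lies in $\{w_1,w_2\}$. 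However $w_1\in U$ excludes $v=w_1$, and $w_2\not\sim w_1$ excludes $v=w_2$; thus $N(U)=\emptyset$ in every sub-case.
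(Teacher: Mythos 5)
Your proof is correct, but the core case $n=4$ is argued quite differently from the paper. The paper proceeds by contradiction: assuming every $4$-tuple has a common neighbour, it locates a vertex $v$ with neighbours $u_1,\dots,u_4$, uses $K_5$-freeness to produce an external vertex $w$ adjacent to $v,u_1,u_2,u_3$, and then appeals to a planar embedding (Figure \ref{fig:yes0}) to pin down $N(v,w,u_2,u_4)$ and force a $K_5$. You instead exhibit a $4$-set with empty common neighbourhood directly, splitting on the set $W$ of vertices of degree at least $4$: if $|W|\leq 4$, any $4$-set containing $W$ works; if $|W|\geq 5$, $K_5$-freeness yields non-adjacent $w_1,w_2\in W$, and you split on $|N(w_1)\cap N(w_2)|$, invoking $K_{3,3}$-freeness (the same fact the paper records at the start of Section \ref{sec:2}) in the final subcase. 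Your argument never uses an embedding --- only that $G$ contains no $K_5$ and no $K_{3,3}$ subgraph --- so it in fact proves the lemma for this strictly larger class of graphs and avoids the somewhat delicate geometric step in the paper's proof. The reduction to $n=4$ is the same in substance in both treatments: the paper passes from $n$-tuples to $4$-tuples by monotonicity of common neighbourhoods, which is exactly what your induction accomplishes explicitly.
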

\begin{proof}
By contradiction, assume that $0\not\in A_n$. Then every $4$-tuple of vertices has a common neighbour. Since $A_n(G)\neq\emptyset$, then $|V(G)|\geq 5$. We take
\[vu_1,vu_2,vu_3,vu_4\in E(G).\]
Given any four of $v,u_1,u_2,u_3,u_4$, they have a common neighbour. Since $G$ is planar, it does not contain a subgraph isomorphic to $K_5$, so that $v,u_1,u_2,u_3,u_4$ cannot be all pairwise adjacent. Hence w.l.o.g., there exists $w\neq u_4$ adjacent to $v,u_1,u_2,u_3$, as in Figure \ref{fig:yes0a}. Since
\[N(v,w,u_2,u_4)\neq\emptyset,\]
then by planarity we have $N(v,w,u_2,u_4)=\{u_i\}$ with $i\in\{1,3\}$ w.l.o.g, $i=1$, as in Figure \ref{fig:yes0b}. Now $N(v,w,u_1,u_2)\neq\emptyset$, but as $v,w,u_1,u_2$ are all pairwise adjacent, $G$ contains a copy of $K_5$, contradicting planarity.
\begin{figure}[ht]
	\centering
	\begin{subfigure}{0.48\textwidth}
		\centering
		\includegraphics[width=4.cm]{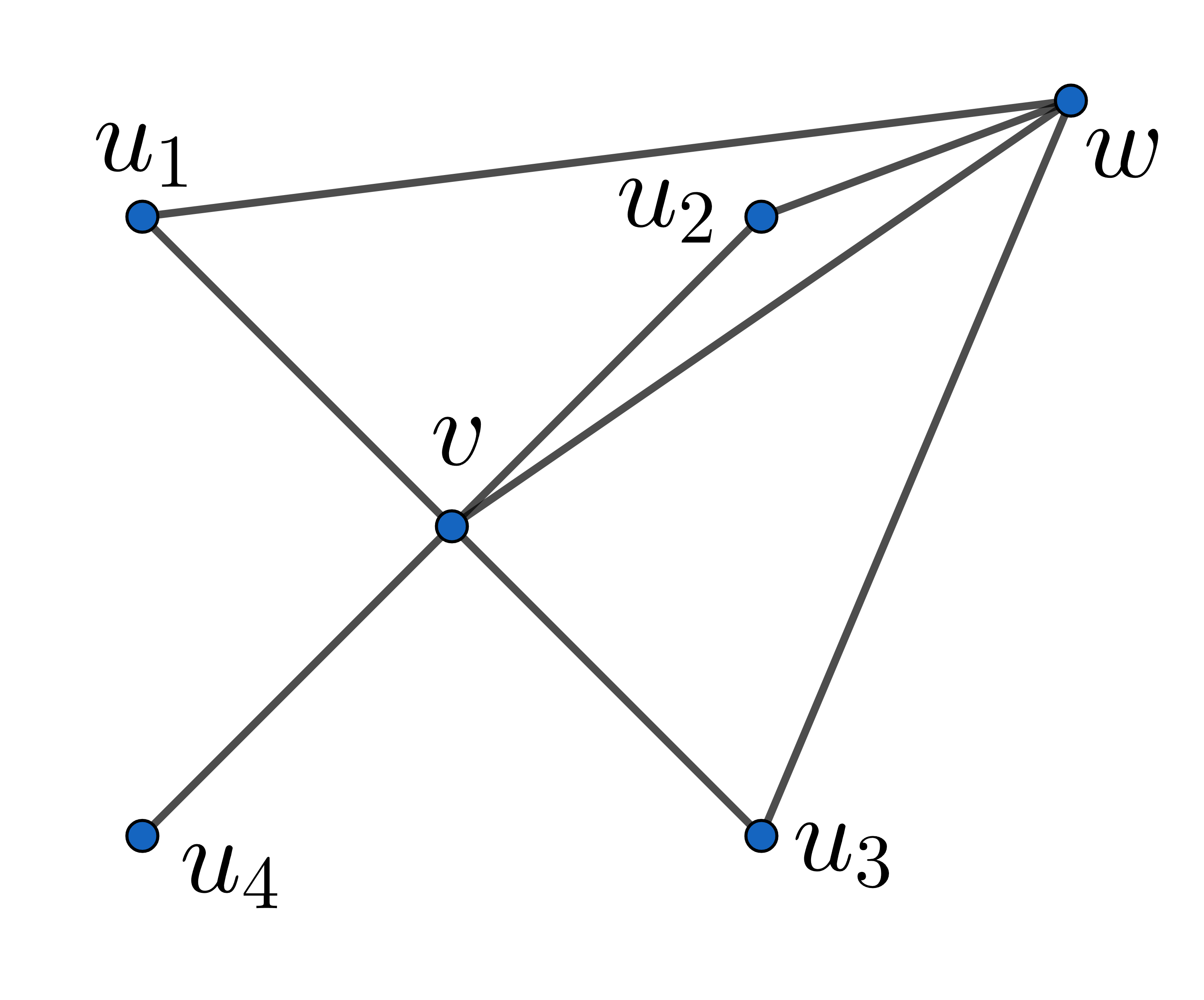}
		\caption{There exists $w\neq u_4$ adjacent to $v,u_1,u_2,u_3$.}
		\label{fig:yes0a}
	\end{subfigure}
	\hfill
	\begin{subfigure}{0.48\textwidth}
		\centering
		\includegraphics[width=4.cm]{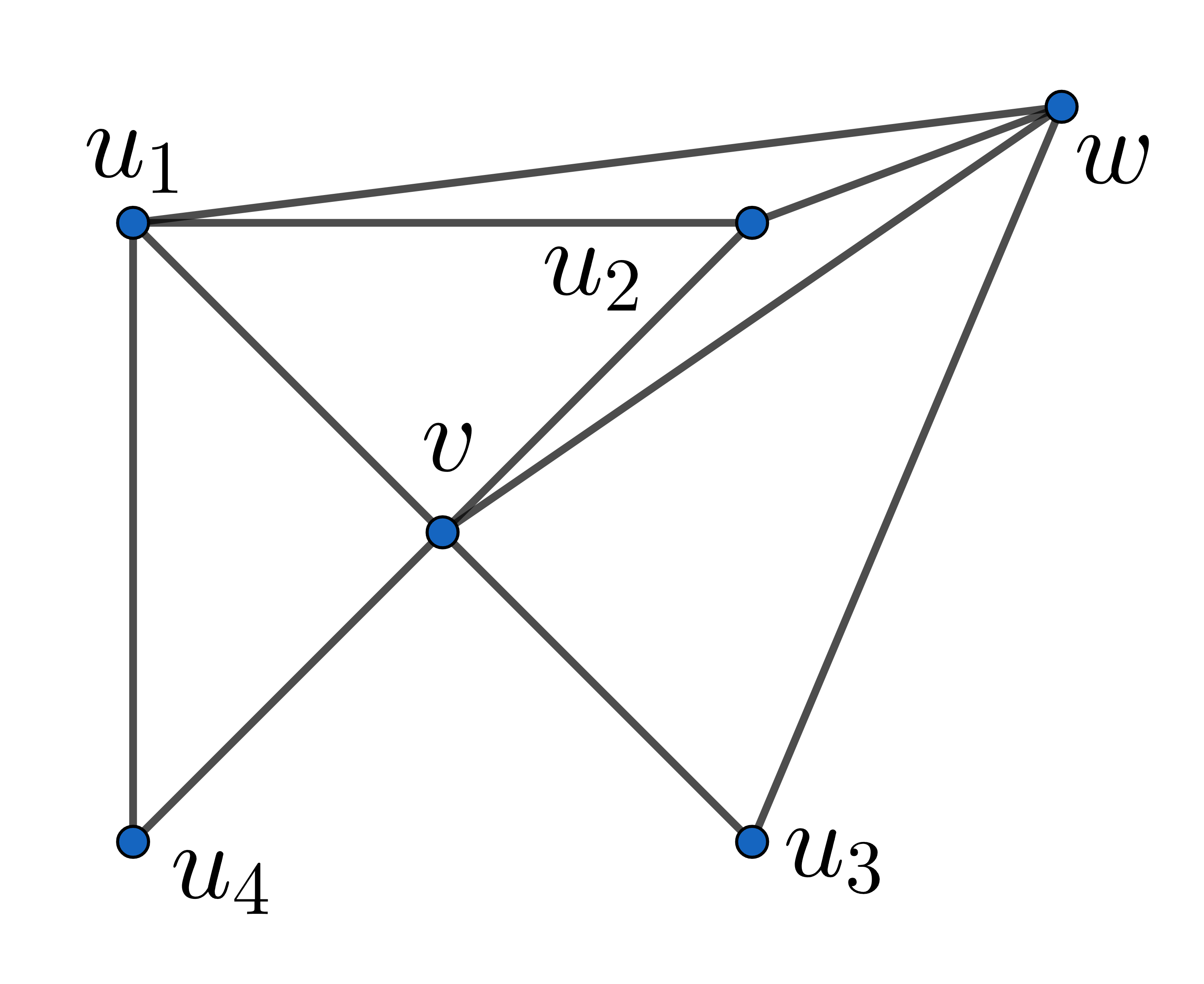}
		\caption{$N(v,w,u_2,u_4)=\{u_1\}$.}
		\label{fig:yes0b}
	\end{subfigure}
	\caption{Lemma \ref{le:no0}.}
	\label{fig:yes0}
\end{figure}
\end{proof}

\begin{prop}
	\label{prop:no03}
Let $G$ be a planar graph on at least three vertices. If $0\not\in A_3(G)$, then either $G\in\{S_5,S_7\}$ or $G\in\ct_m$ for some $m\geq 2$.
\end{prop}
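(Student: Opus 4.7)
The plan is a two-step structural reduction. First I would show that $G$ admits a dominating vertex $v$; then I would analyze the outerplanar graph $H := G - v$, which inherits the condition that every pair of its vertices has a common neighbor, and from this derive either $G \in \ct_m$ or $G \in \{S_5, S_7\}$.

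\textbf{Finding a dominating vertex.} If $G$ is complete, planarity forces $G \in \{K_3, K_4\}$, and only $K_4 \in \ct_2$ satisfies $0 \notin A_3$. Otherwise, fix a non-adjacent pair $u_1, u_2$ and set $S := N(u_1) \cap N(u_2)$. For each $z \in V(G) \setminus \{u_1, u_2\}$ the common neighbor of $\{u_1, u_2, z\}$ lies in $S \cap N(z)$, so $V(G) \setminus \{u_1, u_2\} \subseteq \bigcup_{w \in S} N(w)$. If $|S| = 1$, the unique $w \in S$ is adjacent to every other vertex and is dominating. For $|S| \geq 2$, planarity (in particular the absence of $K_{3,3}$ and the bound $|N(a) \cap N(b) \cap N(c)| \leq 2$) sharply restricts how the neighbourhoods of $S$ can jointly cover $V(G)$, and I expect a careful case analysis, possibly iterating on additional non-adjacent pairs, to force the existence of a dominating vertex in every case.

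\textbf{Structure of $H = G - v$.} With $v$ dominating, $H$ is outerplanar. For every pair $u, z \in V(H)$ the triple $\{v, u, z\}$ has a common neighbour in $G$ which lies in $V(H)$ and is adjacent to both $u$ and $z$; hence $0 \notin A_2(H)$. The minimum degree of $H$ is therefore at least $1$, and any degree-$1$ vertex $u$ with neighbour $w$ forces, via the pairs $\{u, z\}$ for $z \neq u, w$, that $w$ is adjacent to every other vertex of $H$, so $w$ is dominating in $H$. Then $\{v, w\}$ is an adjacent pair of dominating vertices of $G$; setting $H' := H - w$, planarity of $G \simeq H' + K_2$ forces $H'$ to be a subgraph of some path $P_m$ (otherwise a vertex of degree $\geq 3$ in $H'$ would produce a $K_{3,3}$, and any cycle in $H'$ would exceed the planar edge bound), while the triple $\{v, w, u\}$ forces each $u \in V(H')$ to have a neighbour in $H'$. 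This yields $G \in \ct_m$.

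\textbf{Exceptional regime.} It remains to handle the case when $H$ has no degree-$1$ vertex, so every vertex of $H$ has degree $\geq 2$. Since $H$ is outerplanar some vertex has degree exactly $2$; analyzing its two neighbours $a, b$ via the pairwise common-neighbour condition shows $ab \in E(H)$ and $N_H(a) \cup N_H(b) \supseteq V(H) \setminus \{a, b\}$, so $a, b$ together almost-dominate $H$. Iterating this analysis for the at least two low-degree vertices guaranteed by outerplanarity, and combining with the global constraint $0 \notin A_3(G)$ restricted to triples involving $v$, should collapse the possibilities to a short finite list matching exactly $\{S_5, S_7\}$. Proving that only these two graphs arise, and verifying that each indeed satisfies $0 \notin A_3$ and is planar, is the main obstacle and will require the most detailed case work.
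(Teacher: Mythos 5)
Your opening step --- that $G$ must admit a dominating vertex --- is false, and the counterexample is one of the target graphs of the statement itself: $S_7$ has eight vertices and maximum degree $6$, so it has no dominating vertex, yet $0\notin A_3(S_7)$. Concretely, $S_7$ is the tetrahedron with one degree-$3$ vertex inserted into each of its four faces; taking for $u_1,u_2$ two of the inserted vertices, the set $S=N(u_1)\cap N(u_2)$ in your argument has size $2$, and the covering condition $V(G)\setminus\{u_1,u_2\}\subseteq\bigcup_{w\in S}N(w)$ is satisfied without any single vertex being dominating. So the case $|S|\geq 2$, which you defer to an unspecified case analysis that you "expect" to force a dominating vertex, provably cannot be closed, and the whole reduction to the outerplanar graph $H=G-v$ collapses: $S_7$ would never be reached. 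Your second step is sound where it applies (if $v$ is dominating and $H=G-v$ has a vertex of degree $1$, the derivation of $G\in\ct_m$ works), but the "exceptional regime" paragraph is only a plan, and it is aimed at a case decomposition that does not exhaust the possibilities.

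For comparison, the paper avoids any dominating-vertex claim: it first shows $G$ is $2$-connected, then that every face of any planar embedding is a triangle or a quadrangle. The presence of a quadrangular face forces two vertices adjacent to all others, giving $G\in\ct_m$; in the remaining case $G$ is a triangulation, which is analyzed by locating a tetrahedron $K_4\subseteq G$ and distributing the remaining vertices among its four faces --- and it is precisely the configuration with a vertex in each of the four faces that produces $S_7$. If you want to salvage your approach, you would need a separate argument for planar graphs with $0\notin A_3$ and no dominating vertex, which is essentially the triangulation analysis the paper carries out.
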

\begin{proof}
Firstly, we note that if $z_0$ is a separating vertex of $G$ and $z_1,z_2\in V(G)$ are in distinct connected components of $G-z_0$, then $N(z_0,z_1,z_2)=\emptyset$. Therefore, $G$ is $2$-connected. It follows that in every planar immersion of $G$, each region is delimited by a cycle \cite[Proposition 4.2.5]{dieste}.

Next, we wish to prove that in any planar immersion of $G$, each region is delimited by a triangle or quadrangle.  By contradiction, one considers an immersion of $G$ in the plane, and a region
\[F=[u_1,u_2,\dots,u_\ell], \quad\ell\geq 5.\]
Assume by contradiction that $u_1u_i\in E(G)$, $i\neq 2,\ell$. For every $1<j<i<k\leq\ell$ one has
\[N(u_1,u_{j},u_{k})\neq\emptyset \qquad\text{and}\qquad N(u_i,u_{j},u_{k})\neq\emptyset\]
thus by planarity $u_ju_1,u_ju_i,u_ku_1,u_ku_i\in E(G)$. If $i\geq 4$, then $v_1v_3,v_2v_4\in E(G)$, while if $i=3$, then $v_1v_4,v_3v_5\in E(G)$, in either case contradicting planarity. We deduce that for every $1\leq i,j\leq\ell$ with $|i-j|\geq 2$ one has $u_iu_j\not\in E(G)$.

Now one writes
\[N(u_1,u_3,u_5)\supseteq\{v\}.\]
As shown above, $v\not\in F$. Since $N(u_2,u_{3},u_{4})\neq\emptyset$, then by planarity $v$ is adjacent to $u_1,u_2,u_3,u_4,u_5$. Now for $6\leq i\leq\ell$ we have $N(u_2,u_{4},u_{i})\neq\emptyset$ so that by planarity $vu_i\in E(G)$. It follows that $v$ is adjacent to every vertex of $F$.

Next, there exist $w,x\in V(G)$ such that $N(v,u_1,u_2)\supseteq\{w\}$ and $N(v,u_3,u_4)\supseteq\{x\}$, as in Figure \ref{fig:no4face}. By planarity, $N(v,w,x)=\emptyset$, contradiction. Thus indeed in every immersion of $G$ every region is either triangular or quadrangular.
\begin{figure}[ht]
	\centering
	\includegraphics[width=4cm]{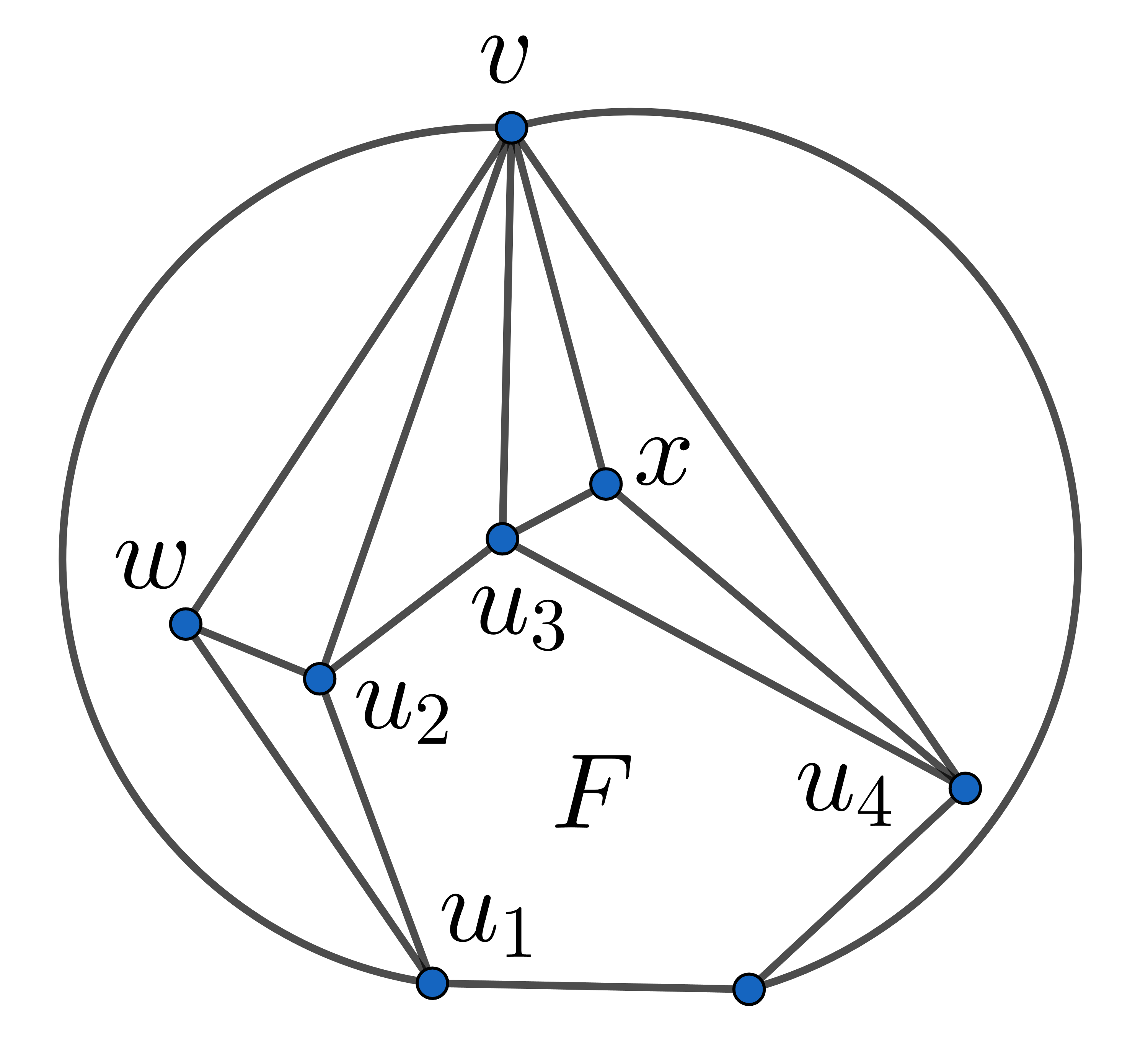}
	\caption{If $G$ is planar and $0\not\in A_3(G)$, then in every plane immersion of $G$, every region is either triangular or quadrangular.}
	\label{fig:no4face}
\end{figure}

One now considers an immersion of $G$ in the plane, and a quadrangular region
\[F=[u_1,u_2,u_3,u_4].\]
Due to planarity we cannot have both $u_1u_3,u_2u_4\in E(G)$, thus w.l.o.g.\ $u_2u_4\not\in E(G)$. One writes
\[N(u_1,u_2,u_3)\supseteq\{v\}\qquad\text{and}\qquad N(u_3,u_4,u_1)\supseteq\{v'\},\]
with $v\neq u_4$ and $v'\neq u_2$. If $v=v'$, then as in the case $\ell\geq 5$ we have $N(v,u_1,u_2)\supseteq\{w\}$ and $N(v,u_3,u_4)\supseteq\{x\}$ where $w\neq x$ are two new vertices, so that by planarity $N(v,w,x)=\emptyset$, contradiction. Hence $v\neq v'$.

Since $N(u_1,u_2,u_4)\neq\emptyset$, by planarity we have $u_1u_3\in E(G)$. Now let $w$ be a new vertex. Since $N(w,u_1,u_2)$ and $N(w,u_1,u_4)$ are both non-empty, then by planarity $wu_3\in E(G)$. Likewise, since $N(w,u_3,u_2)$ and $N(w,u_3,u_4)$ are both non-empty, then by planarity $wu_1\in E(G)$. Hence each vertex of $G$ other than $u_1,u_3$ is adjacent to both of $u_1,u_3$, and moreover, given $w\in V(G)\setminus\{u_1,u_3\}$, there exists $x\in V(G)\setminus\{u_1,u_3,w\}$ adjacent to $w$. It follows that
\[G\in\ct_m,\qquad m=\Delta(G)-1=|V(G)|-2\geq 4,\]
as claimed.

The only possibility not yet considered is that $G$ is a triangulation (maximal planar graph). Recall that a triangulation is a polyhedron. Thereby, let $G$ be a maximal planar graph such that $0\not\in A_3(G)$. Our goal is to show that $G$ is either $T_m$ for some $m\geq 2$ or one of $S_5,S_7$. We begin by considering $a,b,c\in V(G)$ forming a triangle in $G$. As $0\not\in A_3(G)$, there is a fourth vertex $d$ adjacent to $a,b,c$. These four vertices form a tetrahedron $G'$. The remaining vertices lie inside of the faces
\[[a,b,c],\ [a,b,d],\ [a,c,d],\ [b,c,d]\]
of $G'$.

Let $u_1,u_2,\dots,u_\ell$, $\ell\geq 2$ lie inside $[a,b,c]$ and $v_1,v_2,\dots,v_\ell'$, $\ell'\geq 2$ lie inside $[a,b,d]$. Since $0\not\in A_3$ and $G$ is planar, for $1\leq i\leq\ell$ and $1\leq i'\leq\ell'$ one has $N(u_i,v_{i'},a)=\{b\}$, and $N(u_i,v_{i'},b)=\{a\}$. Therefore, each of $u_1,u_2,\dots,u_\ell,v_1,v_2,\dots,v_\ell'$ is adjacent to $a$ and $b$. Assuming w.l.o.g.\ that
\begin{equation}
\label{eq:uvpath}
u_1,u_2,\dots,u_\ell,c,d,v_{\ell'},v_{\ell'-1},\dots,v_1
\end{equation}
appear in this order around $a$ in the planar immersion, then by planarity and $3$-connectivity of $G$ there is a path containing the vertices \eqref{eq:uvpath} in this order, as in Figure \ref{fig:abcd1}. Note that the graph obtained so far is $T_{\ell+\ell'+2}$. Let $w\in V(G)$ lie inside of the face $[a,c,d]$ of $G'$. Then we cannot have $\ell\geq 2$, otherwise $N(u_1,a,w)=\emptyset$.

\begin{figure}[ht]
\centering
\includegraphics[width=6.cm]{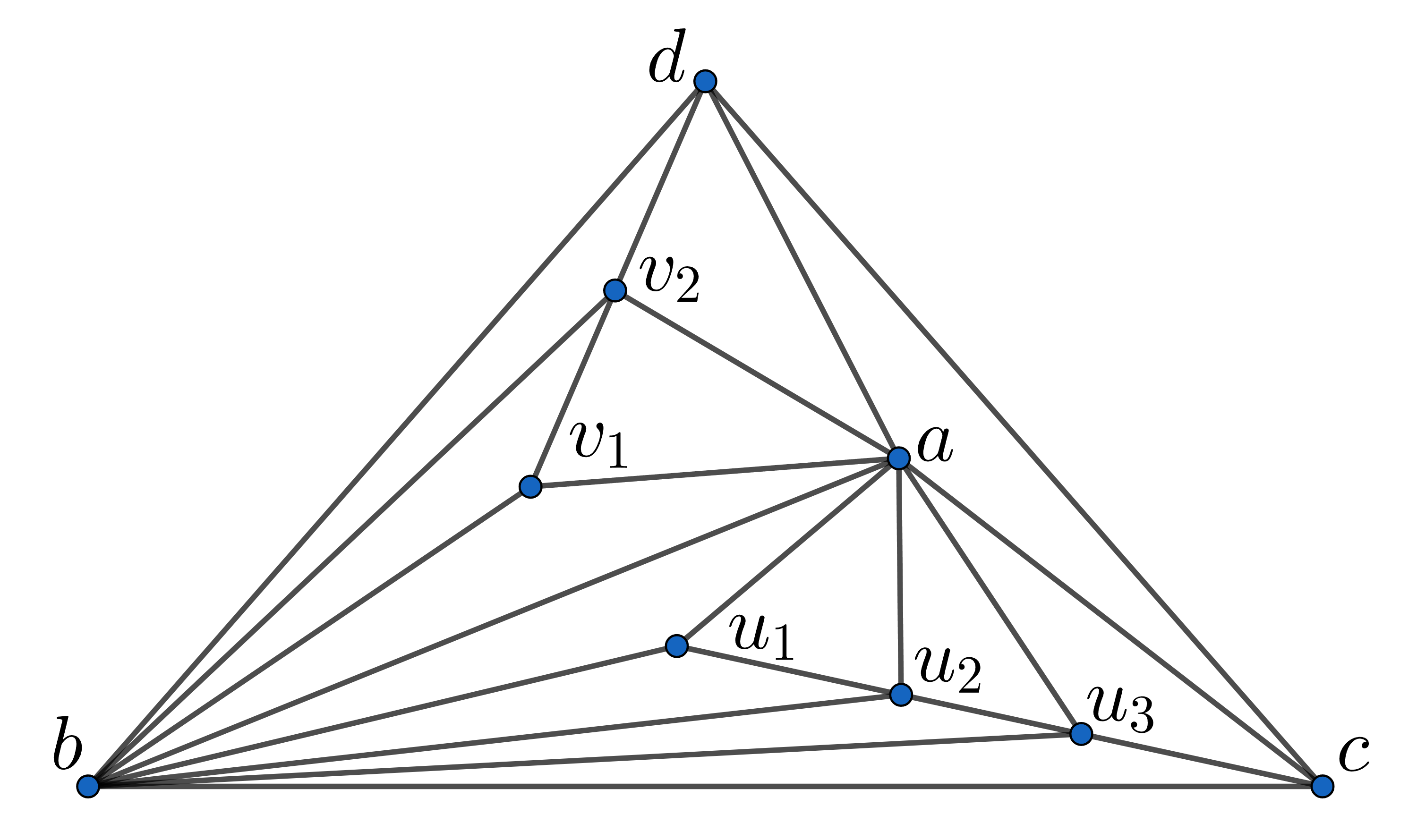}
\caption{$u_1,u_2,u_3$ lie inside $[a,b,c]$ and $v_1,v_2$ lie inside $[a,b,d]$.}
\label{fig:abcd1}
\end{figure}

We deduce that either $G=T_m$, $m\geq 2$, or inside each face of $G'$ there is at most one vertex of $G$, or all vertices of $G$ save $a,b,c,d$ lie inside the same face of $G'$, w.l.o.g.\ $[a,b,c]$. Note that by $3$-connectivity of $G$, if there is exactly one vertex of $G$ lying inside a face $F$ of $G'$, then this vertex is adjacent to all three vertices on the boundary of $F$. In case inside each face of $G'$ there is at most one vertex of $G$, then we recover the construction in \cite{maffucci2024rao}, thus $G$ is one of $T_2,T_3,T_4,S_5,S_7$ (in \cite{maffucci2024rao}, these polyhedra are the ones of degree sequences labeled respectively by $s_0,s_1,s_2,s_3,s_4$).

Now assume instead that all elements of
\[U:=V(G)\setminus\{a,b,c,d\}\]
lie inside of the face $[a,b,c]$ of $G'$. For each $z\in U$ one has $N(d,z,a),N(d,z,b),N(d,z,c)\neq\emptyset$, hence $z$ is adjacent to at least two of $a,b,c$. Conversely, since $G$ is a triangulation, there exist $u,v,w\in V(G)$ distinct from $d$ such that in $G$ we have the faces
\[[a,b,u],\ [a,c,v],\ [b,c,w]\]
(possibly two or all of $u,v,w$ coincide). If $u=v$, then by planarity of $G$ every element of $U$ is adjacent to $b$ and $c$, thus by $3$-connectivity of $G$ as above we recover the construction of $T_m$. Suppose instead that $u,v,w$ are all distinct. The reader may refer to Figure \ref{fig:abcd2}.
\begin{figure}[ht]
	\centering
	\includegraphics[width=6.cm]{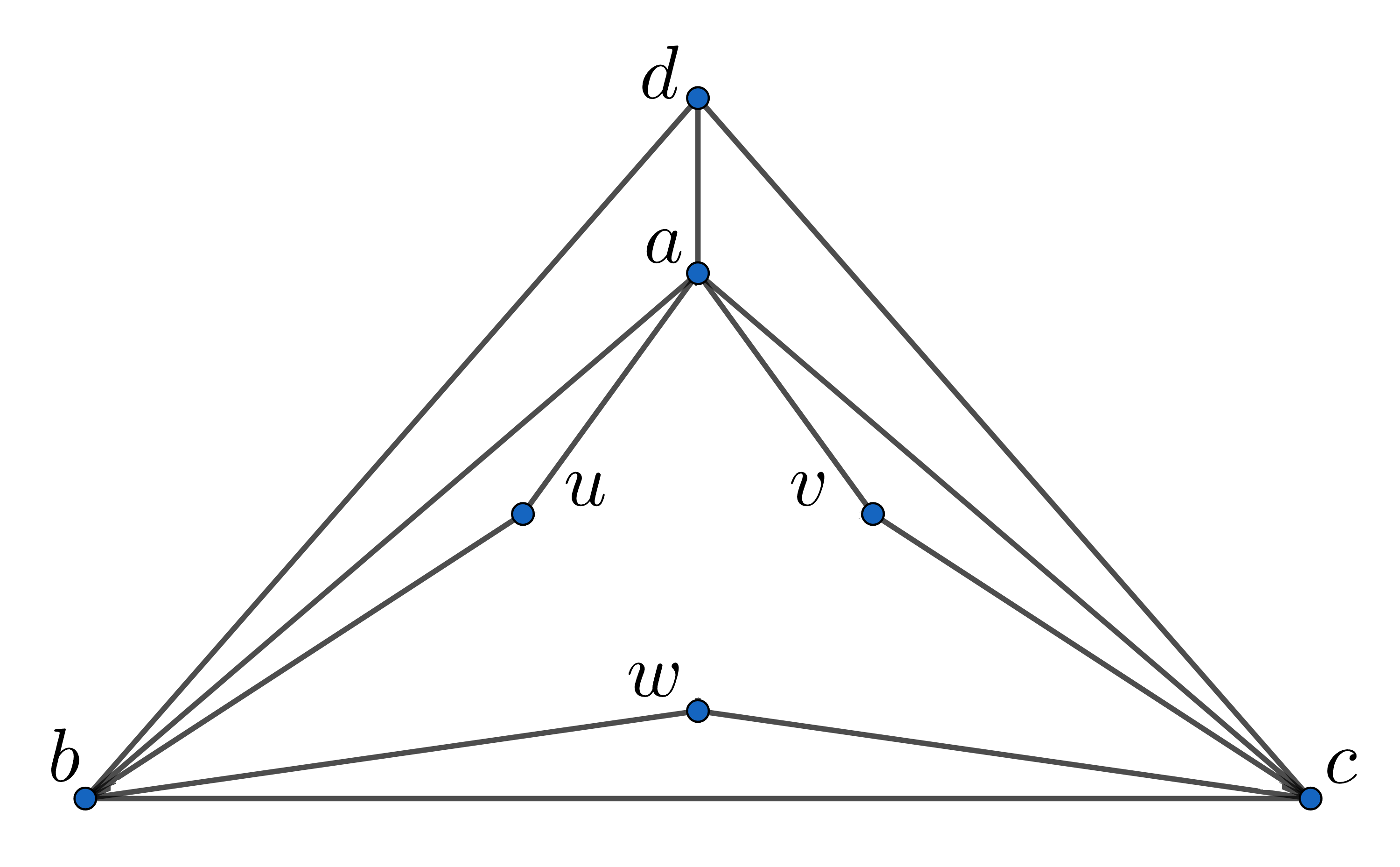}
	\caption{$u,v,w$ are all distinct.}
	\label{fig:abcd2}
\end{figure}

Next, there exists $x\in V(G)$ adjacent to $u,v,w$. If $x\in\{a,b,c\}$ then we have constructed $S_5$ (Figure \ref{fig:s5}). As seen above, there cannot be any further vertices, thus $G\simeq S_5$.

If instead $x\in U$, then as seen above it is adjacent to at least two of $a,b,c$, say $a$ and $b$. Next, $N(v,w,x)\neq\emptyset$, and we cannot have $y\in N(v,w,x)$ with $y\neq c$, else by planarity of $G$, $y\in U$ would be adjacent to at most one of $a,b,c$ (namely $c$). It follows that $N(v,w,x)=\{c\}$. We have recovered $S_7$ (Figure \ref{fig:s7}). As seen above, there cannot be any further vertices, thus $G\simeq S_7$. We have shown that if $G$ is a triangulation verifying $0\not\in A_3(G)$, then $G$ is one of the exceptional graphs $S_5,S_7$ or $T_m$ for some $m\geq 2$.
\end{proof}

Now we will consider $G\in\cq_m$, $m\geq 3$, and show that it verifies \eqref{eq:AQ}.

\paragraph{Case of $\cq_m$.} Let $G\in\cq_m$, $m\geq 3$, and $3\leq n\leq\Delta(G)$. By Lemma \ref{le:no0} and Proposition \ref{prop:no03}, we have $0\in A_n(G)$. Since $n\leq\Delta(G)$, $A_n(G)\neq\{0\}$. By planarity, $A_n(G)\subseteq\{0,1,2\}$. By definition of $\cq_m$, $m$ is the smallest integer such that $|N(v_1,v_2,\dots,v_m)|=2$ holds for every choice of $v_1,v_2,\dots,v_m$ where $N(v_1,v_2,\dots,v_m)\neq\emptyset$. Thereby,
\[A_n(G)=\begin{cases}
	\{0,1,2\}&3\leq n\leq m-1,\\\{0,2\}&m\leq n\leq\Delta(G).
\end{cases}\]
We have verified \eqref{eq:AQ}.

In the other direction, we have the following.
\begin{lemma}
	\label{le:02}
Let $G$ be a planar graph satisfying $A_n(G)=\{0,2\}$ for some $n\geq 3$. Then $G\in\cq_m$ for some $m\leq n$.
\end{lemma}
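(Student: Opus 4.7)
The plan is to show that every vertex $v\in V(G)$ of degree $\ge n$ admits a twin, i.e.\ some $w\ne v$ with $N(w)=N(v)$. Since $2\in A_n(G)$ forces $\Delta(G)\ge n\ge 3$, the minimum $m\in[3,\Delta]$ for which the twin property holds is then well defined and satisfies $m\le n$, placing $G\in\cq_m$. To produce the twin candidate, fix $v$ of degree $d\ge n$ with $N(v)=\{u_1,\dots,u_d\}$ and let $w$ be the unique second element of $N(u_1,\dots,u_n)$; this exists because $v\in N(u_1,\dots,u_n)$ and $A_n(G)=\{0,2\}$. Since $w$ is adjacent to each of $u_1,\dots,u_n$, it differs from them, and the task reduces to showing $N(w)=N(v)$.

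For the inclusion $N(v)\subseteq N(w)$ (only nontrivial when $d>n$): when $n\ge 4$, planarity forbids $K_{3,3}$, so $|N(S)|\le 2$ for every $(n-1)$-subset $S$; thus $N(u_1,\dots,u_{n-1})=\{v,w\}$, and the second common neighbour of $\{u_1,\dots,u_{n-1},u_k\}$ (guaranteed by $A_n(G)=\{0,2\}$) must equal $w$, giving $wu_k\in E$. For $n=3$ we argue by contradiction: if $wu_4\notin E$, then for each pair $\{i,j\}\subset\{1,2,3\}$ the triple $\{u_i,u_j,u_4\}$ produces a second common neighbour $w_{ij}\ne v,w$, not adjacent to the missing $u_\ell$ (else $w_{ij}\in N(u_1,u_2,u_3)=\{v,w\}$). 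In the generic subcase where $v,w,w_{12},w_{13},w_{23}$ are distinct from $u_1,\dots,u_4$, these $9$ vertices carry $4+3\cdot 4=16$ bipartite edges in $G$, violating the bipartite planar bound $|E|\le 2|V|-4=14$. Degenerate subcases in which some $w_{ij}$ coincide with $u_\ell$'s are resolved either by the same bound after recounting, by extracting a $K_5$ subgraph, or by applying $A_3(G)=\{0,2\}$ to further triples such as $\{v,u_i,u_j\}$ to force a triple with exactly one common neighbour.

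For the inclusion $N(w)\subseteq N(v)$: the adjacency $wu_i\in E$ for all $i$ implies $w\ne u_i$, hence $w\notin N(v)$ and in particular $vw\notin E$. If $w$ were adjacent to some $x\notin\{v\}\cup N(v)$, then for $n\ge 4$ the $n$-subset $\{x,u_1,\dots,u_{n-1}\}$ would provide a second common neighbour $y\ne v$ (since $vx\notin E$), and $\{v,w,y\}$ together with $\{u_1,\dots,u_{n-1}\}$ would span $K_{3,n-1}\supseteq K_{3,3}$, contradicting planarity. For $n=3$, the bipartite edge-count argument of the previous paragraph repeats with $x$ in place of $u_4$, once more exceeding $2|V|-4$.

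The main obstacle is unambiguously the case $n=3$: the $K_{3,3}$-avoidance argument used for $n\ge 4$ collapses because $2$-subsets of $N(v)$ may have arbitrarily many common neighbours ($K_{2,\ell}$ being planar for all $\ell$). The workaround combines the sharper bipartite planar bound $|E|\le 2|V|-4$ with careful bookkeeping of the degenerate configurations in which auxiliary common-neighbour vertices collapse into existing neighbours of $v$; in those subcases, applying $A_3(G)=\{0,2\}$ to fresh triples supplies the final contradiction.
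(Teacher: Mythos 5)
Your overall skeleton (every vertex of degree at least $n$ has a twin, hence the minimal $m$ with the twin property is at most $n$ and $G\in\cq_m$) matches the paper's, and your $n\geq 4$ arguments for both inclusions $N(v)\subseteq N(w)$ and $N(w)\subseteq N(v)$ are correct: they run on the same fuel as the paper's case $\ell\geq 4$, namely that an $(n-1)$-subset of $N(v)$ with $n-1\geq 3$ admits at most two common neighbours by $K_{3,3}$-avoidance. (One loose end there: you should also rule out $w\in\{u_{n+1},\dots,u_d\}$; if $w=u_k$ then the $n$-tuple $(u_1,\dots,u_{n-1},w)$ has common neighbourhood exactly $\{v\}$, contradicting $1\notin A_n$.)

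The genuine gap is the case $n=3$. You prove the ``generic'' configuration impossible via the bipartite bound $|E|\leq 2|V|-4$, but the degenerate subcases, where some $w_{ij}$ coincides with the omitted $u_\ell$, are dismissed with a one-sentence menu of possible fixes, and at least one such configuration defeats all three. Take $w_{12}=u_3$ and $w_{13}=u_2$ (so $u_2,u_3$ are adjacent to each other and to both $u_1,u_4$) while $w_{23}$ is a new vertex adjacent to $u_2,u_3,u_4$ but not $u_1$. The resulting graph on $\{v,w,u_1,u_2,u_3,u_4,w_{23}\}$ has $15$ edges on $7$ vertices, is planar (it is $P_5+\overline{K_2}$ with poles $u_2,u_3$ and path $w,u_1,v,u_4,w_{23}$, plus the edge $u_2u_3$ through the outer quadrangular face), contains no $K_5$, and is not bipartite, so neither edge count nor a forbidden subgraph applies; moreover every triple inspected so far has common neighbourhood of size exactly $0$ or $2$. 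Ruling it out requires pushing further: $N(w,u_2,u_3)\supseteq\{u_1\}$ forces a second common neighbour $t\not\sim u_1$, then $N(t,u_2,u_3)$ forces another, and so on --- an unbounded iteration. This is precisely the infinite-descent argument the paper is compelled to run in its $\ell=n=3$ case (the nested vertices $z_1,z_2,z_3,\dots$ inside successive triangles on $w_1,w_2$), i.e.\ it is the analytical heart of the lemma rather than a routine verification. The same omission recurs in your third paragraph when $x$ replaces $u_4$. Until these subcases are actually carried out, the proof is incomplete for $n=3$.
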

\begin{proof}
Let $x\in V(G)$ be of degree at least $n$. Since $1\not\in A_n(G)$, there exists an integer $\ell\geq n$ and a vertex $y$ such that
\[N(x,y)=\{w_1,w_2,\dots,w_\ell\}.\]
Our goal is to show that $N(x)=N(y)=N(x,y)$. Suppose by contradiction that there exists $z_1$ such that $z_1x\in E(G)$, $z_1y\not\in E(G)$.

If $\ell\geq 4$, then by planarity $w_1,w_2,\dots,w_{\ell-1}$ have no common neighbours other than $x,y$. Since $1\not\in A_n$, it follows that
\[N(x,w_1,w_2,\dots,w_{\ell-1})=\emptyset,\]
and in particular $xy\not\in E(G)$. Since $1\not\in A_n$, then
\[z_1,w_1,w_2,\dots,w_{\ell-1}\]
have a common neighbour other than $x$, hence $w_1,w_2,\dots,w_{\ell-1}$ have a common neighbour other than $x,y$, contradicting planarity. We deduce that if $\ell\geq 4$ then $\deg(x)=\deg(y)=\ell$.

Now fix instead $\ell=n=3$. Suppose for now that $xy\not\in E(G)$. W.l.o.g.\ we may sketch $z_1$ inside of the quadrangle $x,w_1,y,w_2$. Since $1\not\in A_n$, then $z_1,w_2,w_3$ have a common neighbour other than $x$, and by planarity this common neighbour is $w_1$. Similarly, $z_1,w_1,w_3$ have a common neighbour other than $x$, so that by planarity $w_2z_1,w_2w_3\in E(G)$, as in Figure \ref{fig:z1}.
\begin{figure}[ht]
	\centering
	\includegraphics[width=3.5cm]{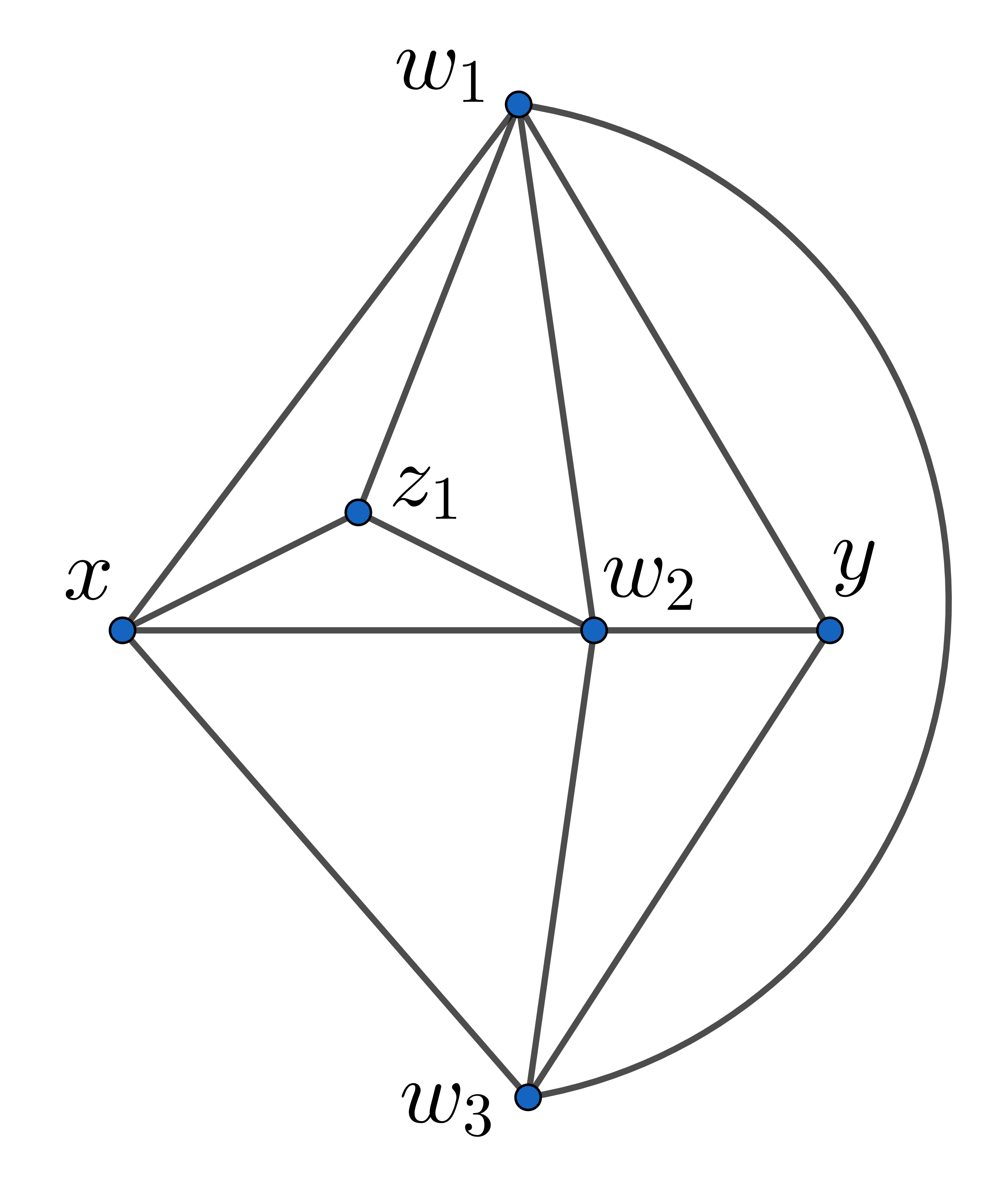}
	\caption{Lemma \ref{le:02}, case of $n=3$.}
	\label{fig:z1}
\end{figure}

Now $z_1,w_1,w_2$ have a common neighbour $z_2$ other than $x$, and by planarity $z_2$ lies inside of the triangle formed by $z_1,w_1,w_2$. Next, $z_2,w_1,w_2$ have a common neighbour $z_3$ other than $z_1$, and by planarity $z_3$ lies inside of the triangle formed by $z_2,w_1,w_2$. Iteratively for each $i\geq 2$, $z_i,w_1,w_2$ have a common neighbour $z_{i+1}$ other than $z_{i-1}$, and by planarity $z_{i+1}$ lies inside of the triangle formed by $z_i,w_1,w_2$. This contradicts the finiteness of $G$ thus $\deg(x)=\deg(y)=\ell$.

Lastly, suppose that $\ell=n=3$ and $z_1=y$. For every $(i,j)\in\{(1,2),(1,3),(2,3)\}$ we have $N(x,w_i,w_j)\neq\{y\}$ thus there exists $u_{i,j}$ such that
\[N(x,w_i,w_j)=\{y,u_{i,j}\}.\]
By planarity, $u_{1,2},u_{1,3},u_{2,3}$ are all distinct. Then $G$ contains a subgraph homeomorphic from $K_{3,3}$ with partition
\[\{\{x,w_1,w_2\},\{y,u_{1,2},u_{1,3}\}\},\]
contradicting planarity.

We have shown that for every $n\geq 3$ if $x$ is a vertex of degree at least $n$, then there exists $y\neq x$ with the same neighbourhood as $x$. That is to say, $G\in\cq_m$ for some $m\leq n$ as claimed.
\end{proof}

\subsection{Concluding the proof}
Let $G\not\in\ce$ (recall \eqref{eq:ce}) be a planar graph and $3\leq n\leq\Delta(G)$ an integer. By Lemma \ref{le:no0} and Proposition \ref{prop:no03}, $0\in A_n(G)$. Since $n\leq\Delta(G)$, $A_n(G)\neq\{0\}$. By Lemma \ref{le:02}, $A_n(G)\neq\{0,2\}$. We deduce that
\[A_n(G)\in\{\{0,1\},\{0,1,2\}\}.\]
Clearly $2\in A_n(G)$ if and only if $G$ contains a subgraph isomorphic to $K_{2,n}$. We have verified \eqref{eq:Agen}, and the proof of Theorem \ref{thm:1} is complete.

\section{Proof of Theorem \ref{thm:3}}
\label{sec:3}

\subsection{$1\in A_2(G)$}

We start by recalling two results from \cite{maffucci2025classification}.
\begin{lemma}[{\cite[Lemma 3.1]{maffucci2025classification}}]
	\label{le:2}
	Let $G$ be a planar graph. If there exists an integer $a\geq 3$ such that $a\in A_2(G)$, then $2\in A_2(G)$.
\end{lemma}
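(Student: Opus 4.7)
The plan is to fix a planar embedding of $G$ and exploit the rigid face structure of the $K_{2,a}$ subgraph on $\{u_1,u_2\}\cup N(u_1,u_2)$, where $u_1,u_2$ realise $|N(u_1,u_2)|=a\geq 3$. Let $W=N(u_1,u_2)=\{w_1,\dots,w_a\}$, indexed cyclically around $u_1$. The $K_{2,a}$ partitions the sphere into $a$ quadrilateral faces $Q_i=[u_1,w_i,u_2,w_{i+1}]$ (indices modulo $a$), and every vertex of $V(G)\setminus(\{u_1,u_2\}\cup W)$ lies strictly in the interior of some $Q_i$. By planarity any such vertex $z\in\mathrm{int}(Q_i)$ is adjacent only to the boundary vertices $u_1,u_2,w_i,w_{i+1}$ of $Q_i$ or to other vertices inside the same face; in particular it is adjacent to at most two members of $W$, namely $w_i$ and $w_{i+1}$.

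In the easier case $a\geq 4$ I would fix a non-consecutive pair such as $(w_1,w_3)$. Any common neighbour $z$ of $w_1,w_3$ distinct from $u_1,u_2$ would have to sit in a single face $Q_k$ whose boundary contains both $w_1$ and $w_3$; but the boundary of each $Q_k$ contains only the consecutive pair $w_k,w_{k+1}$, so no such $z$ exists. Hence $N(w_1,w_3)=\{u_1,u_2\}$, giving $|N(w_1,w_3)|=2$ and $2\in A_2(G)$ immediately.

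The harder case is $a=3$, where every pair in $W$ is cyclically consecutive and the previous shortcut fails. I would argue by contradiction: assume $2\notin A_2(G)$. Then for each $i$, $|N(w_i,w_{i+1})|\geq 3$, so there is an interior vertex $z_i\in\mathrm{int}(Q_i)$ with $z_i\sim w_i,w_{i+1}$, and the planar exclusion of $K_{3,3}$ forces $z_i\not\sim w_{i+2}$. The pair $(u_1,z_i)$ then satisfies $\{w_i,w_{i+1}\}\subseteq N(u_1,z_i)$, so by hypothesis $|N(u_1,z_i)|\geq 3$, and every common neighbour of $(u_1,z_i)$ lies in the closure of $Q_i$. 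This produces a $K_{2,\geq 3}$ configuration strictly nested inside $Q_i$, to which the same reasoning applies.

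The main obstacle will be formalising this descent for the $a=3$ case. I would pick, among all pairs $(x,y)\in V(G)^2$ with $|N(x,y)|\geq 3$, one that minimises the number of $G$-vertices enclosed in the associated quadrilateral faces of its $K_{2,|N(x,y)|}$ configuration, and apply the above construction to it; the new pair produced lives in a strictly smaller enclosed region, contradicting minimality. Since $G$ is finite, the descent must terminate, forcing some pair to have exactly two common neighbours, which contradicts the assumption and completes the proof.
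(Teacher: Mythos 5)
The paper does not actually prove this lemma; it is imported verbatim from \cite[Lemma 3.1]{maffucci2025classification}, so there is no internal proof to compare against and your argument must stand on its own. It does not, for one recurring reason: you repeatedly assume that a common neighbour of two vertices of $W=N(u_1,u_2)$ must lie in the closure of a \emph{single} quadrilateral face of the $K_{2,a}$-subdivision. But a vertex of $W$ itself lies on the boundary of two such faces, so it can be joined to $w_i$ by an edge drawn in one face and to $w_j$ by an edge drawn in another. In your case $a\geq 4$ this kills the claim $N(w_1,w_3)=\{u_1,u_2\}$: the edges $w_1w_2$ and $w_2w_3$ can be drawn inside $Q_1$ and $Q_2$ respectively, so $w_2$ (and also $w_4$ when $a=4$) may be a common neighbour of $w_1,w_3$. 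The octahedron $B_4=C_4+\overline{K_2}$ is a concrete counterexample: taking $u_1,u_2$ to be the two vertices of $\overline{K_2}$ gives $a=4$ and $N(w_1,w_3)=\{u_1,u_2,w_2,w_4\}$, of size $4$, not $2$. (The lemma still holds there, but only via pairs your argument never inspects, e.g.\ $N(w_1,w_2)=\{u_1,u_2\}$.)

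The same oversight undermines the case $a=3$: the third common neighbour of $w_i,w_{i+1}$ forced by the assumption $2\notin A_2(G)$ need not be an interior vertex of $Q_i$ --- it can be $w_{i+2}$, as happens in $T_3=B_3$ and in every $T_m$, i.e.\ precisely in this paper's exceptional class, where the witness for $2\in A_2$ is a pair of the form $(u_1,w_i)$ that your scheme never considers. So the descent is launched from a configuration that need not exist. Finally, even where it does apply, the proposed measure does not work as stated: the quadrilateral faces of a $K_{2,a'}$ configuration tile the whole sphere, so ``the number of vertices enclosed in the associated faces'' equals $|V(G)|$ minus the size of the configuration and need not decrease --- one of the faces of the new $(u_1,z_i)$-configuration contains everything outside $Q_i$, including $u_2$ and all vertices of the other $Q_j$. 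A workable induction has to track the number of vertices strictly inside one distinguished disc bounded by a $4$-cycle $xvyv'$ with $v,v'\in N(x,y)$, minimised over all such cycles and both sides, and show that quantity strictly drops; as you yourself anticipate, that is where the real work lies, and it is also exactly where the boundary-vertex cases above must be confronted rather than excluded.
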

In Lemma \ref{le:2}, the planarity assumption is essential, as illustrated by the complete graphs $K_\ell$, $\ell\geq 5$, that satisfy $A_2(K_\ell)=\{\ell-2\}$, and by the bipartite complete $K_{\ell_1,\ell_2}$, $\ell_2\geq\ell_1\geq 3$, that satisfy $A_2(K_{\ell_1,\ell_2})=\{0,\ell_1,\ell_2\}$.

\begin{lemma}[{\cite[Corollary 3.3]{maffucci2025classification}}]
	\label{le:4cy}
	Let $G$ be a planar graph. Then $G$ contains a $4$-cycle if and only if $2\in A_2(G)$.
\end{lemma}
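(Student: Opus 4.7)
The plan is to verify both implications essentially by unwinding the definition of $A_2(G)$, invoking Lemma~\ref{le:2} only to absorb the ``common-neighbour count strictly exceeds two'' case into the conclusion. Note that planarity is used only implicitly, through Lemma~\ref{le:2}; the other direction does not need it at all.

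For the ``if'' direction, I would start by assuming $2 \in A_2(G)$. By definition of $A_2$, there exist distinct $u_1, u_2 \in V(G)$ with $|N(u_1,u_2)| = 2$; pick the two common neighbours and call them $v_1, v_2$. Simplicity of $G$ forces $v_i \neq u_j$ for all $i,j \in \{1,2\}$, and $v_1 \neq v_2$ by construction, so $u_1, v_1, u_2, v_2$ are four distinct vertices. The edges $u_1v_1$, $v_1u_2$, $u_2v_2$, $v_2u_1$ all belong to $E(G)$, exhibiting a $4$-cycle.

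For the ``only if'' direction, I would begin with a $4$-cycle in $G$, written with vertices in cyclic order as $u_1, v_1, u_2, v_2$, so that $u_1$ and $u_2$ are the opposite (non-adjacent-in-the-cycle) pair. Then $\{v_1, v_2\} \subseteq N(u_1) \cap N(u_2) = N(u_1,u_2)$, so the integer $a := |N(u_1,u_2)|$ lies in $A_2(G)$ and satisfies $a \geq 2$. If $a = 2$, we are immediately done; otherwise $a \geq 3$, and Lemma~\ref{le:2} (this is precisely where planarity enters) delivers $2 \in A_2(G)$.

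I do not expect any real obstacle: the forward direction is a single-line observation using the definition of common neighbourhood, and the reverse direction is an immediate consequence of Lemma~\ref{le:2}. The only subtlety worth flagging is remembering to separate the cases $a = 2$ and $a \geq 3$, since $A_2(G)$ records the exact cardinality of a common neighbourhood rather than a lower bound, so the existence of a pair with at least two common neighbours does not a priori place $2$ itself into $A_2(G)$ without invoking Lemma~\ref{le:2}.
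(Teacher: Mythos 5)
Your proof is correct. Note that the paper does not actually prove this statement --- it is imported wholesale as \cite[Corollary 3.3]{maffucci2025classification} --- so there is no in-paper argument to compare against; your two-directional argument (the purely definitional forward direction, plus handling the $a\geq 3$ case of the converse by reducing to Lemma~\ref{le:2}) is the natural and evidently intended one, and the case split on $a=2$ versus $a\geq 3$ that you flag is exactly the one point where care is needed, since $A_2(G)$ records exact cardinalities rather than lower bounds.
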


Next, we establish the following consequence.
\begin{cor}
	\label{cor:01}
	Let $G$ be a planar graph of minimum vertex degree at least $2$. Then
	\[\{1\}\subseteq A_2(G)\subseteq\{0,1\}\]
	if and only if $G$ contains no $4$-cycles.
\end{cor}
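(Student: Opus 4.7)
The plan is to leverage Lemmas \ref{le:2} and \ref{le:4cy} directly for the easier direction, and to exploit the minimum degree hypothesis for the other.

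For the forward direction, I would observe that $\{1\}\subseteq A_2(G)\subseteq\{0,1\}$ forces $2\notin A_2(G)$. Then Lemma \ref{le:4cy} immediately gives that $G$ contains no $4$-cycle. This step is essentially bookkeeping and requires no new ideas.

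For the reverse direction, suppose $G$ contains no $4$-cycles. Lemma \ref{le:4cy} yields $2\notin A_2(G)$, and then Lemma \ref{le:2} (contrapositive) rules out every integer $a\geq 3$ from $A_2(G)$, so $A_2(G)\subseteq\{0,1\}$. The remaining content is showing $1\in A_2(G)$. Here I would use the minimum degree assumption: pick any vertex $v\in V(G)$ and, since $\deg(v)\geq 2$, two distinct neighbours $u_1,u_2$ of $v$. Then $v\in N(u_1,u_2)$, so $|N(u_1,u_2)|\geq 1$. If $|N(u_1,u_2)|\geq 2$, a second common neighbour $v'\neq v$ would yield a $4$-cycle $u_1vu_2v'u_1$ (the four vertices are pairwise distinct: $u_1\neq u_2$ and $v\neq v'$ by choice, and $v,v'\notin\{u_1,u_2\}$ because each is adjacent to both $u_1$ and $u_2$ in a loopless graph), contradicting the hypothesis. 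Hence $|N(u_1,u_2)|=1$, witnessing $1\in A_2(G)$.

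I do not foresee any serious obstacle: the two cited lemmas handle the classification side, and the minimum degree hypothesis is precisely what produces a pair of vertices with at least one common neighbour, while the absence of $4$-cycles prevents that common neighbourhood from growing beyond one. The only subtlety worth spelling out carefully is the distinctness of the four vertices in the purported $4$-cycle, which is why the minimum degree bound is stated as $\geq 2$ rather than $\geq 1$.
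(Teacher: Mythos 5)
Your proposal is correct and follows essentially the same route as the paper: both directions rest on Lemma \ref{le:4cy} (and Lemma \ref{le:2} for the containment $A_2(G)\subseteq\{0,1\}$), and the minimum-degree hypothesis is used exactly as in the paper to produce a pair of vertices with a common neighbour, hence $1\in A_2(G)$. Your explicit $4$-cycle argument for $|N(u_1,u_2)|=1$ is fine but slightly redundant, since $|N(u_1,u_2)|\geq 1$ together with the already-established $A_2(G)\subseteq\{0,1\}$ yields the conclusion immediately.
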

\begin{proof}
In one direction, suppose that $\{1\}\subseteq A_2(G)\subseteq\{0,1\}$. Then in particular $2\not\in A_2(G)$, so that by Lemma \ref{le:4cy}, $G$ contains no $4$-cycles.

In the other direction, suppose that $G$ contains no $4$-cycles. By Lemma \ref{le:4cy}, $2\not\in A_2(G)$, thus by Lemma \ref{le:2} we have $A_2(G)\subseteq\{0,1\}$. Since the minimum vertex degree in $G$ is at least $2$, we have $A_2(G)\neq\{0\}$.
\end{proof}

We now study the case $A_2(G)=\{1\}$. Recall the definition of $D_\ell$ \eqref{eq:D}.
\begin{lemma}
	\label{le:1}
	Let $G$ be a planar graph. Then 
	\[A_2(G)=\{1\}\]
	if and only if $G\simeq D_\ell$ for some $\ell\geq 1$.
\end{lemma}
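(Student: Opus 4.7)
My plan is a two-direction argument, with the forward direction hinging on producing a dominating vertex.

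For the reverse direction, a direct check shows that in $D_\ell$ the central vertex $v$ is dominating: each pair $\{v, u_i\}$ has as unique common neighbour the partner $u_i'$ of $u_i$ in its triangle, and any pair of non-central vertices (whether within or across triangles) has $v$ as its unique common neighbour, so $A_2(D_\ell) = \{1\}$.

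Conversely, assume $A_2(G) = \{1\}$, so every pair of distinct vertices has exactly one common neighbour. The key structural observation is that for each $v \in V(G)$, the map $u \mapsto u'$ on $N(v)$ defined by $\{u'\} = N(u, v)$ is a fixed-point-free involution; hence the induced subgraph $\Gamma_G(v)$ is a perfect matching, and in particular every degree is even. If $\Delta(G) = 2$, then $G$ is a disjoint union of cycles, and the only cycle in which every pair of vertices shares a common neighbour is $C_3 = D_1$. For $\Delta(G) \geq 4$ the crux is to prove that some vertex of $G$ is dominating. The cleanest route is to invoke the classical Friendship Theorem of Erd\H{o}s, R\'enyi and S\'os, which delivers such a vertex even without planarity. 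Alternatively, in the planar case one can argue directly: fix a maximum-degree vertex $v$ with $N(v)$ matched as $u_1 u_1', \ldots, u_k u_k'$, and suppose for contradiction some $w \in V(G) \setminus N[v]$ exists. Then $N(v, w) = \{u_1\}$ for some $u_1$, and the unique common neighbour $w' \in N(u_1, w)$ must lie outside $N[v]$ (otherwise either $v, w$ or $u_1, u_j$ with $j \neq 1$ would acquire a second common neighbour). Iterating with $u_2, u_2'$ and so on should produce enough new vertices outside $N[v]$, with rigidly constrained adjacencies, to exhibit a $K_{3,3}$-subdivision, contradicting Kuratowski's theorem.

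Once a dominating vertex $v$ is located, the finish is routine: for any $u \in V(G) \setminus \{v\}$, the condition $|N(v, u)| = 1$ forces $\deg_G(u) = 2$, and for any two $u_1, u_2 \in V(G) \setminus \{v\}$, the inclusion $v \in N(u_1, u_2)$ combined with $|N(u_1, u_2)| = 1$ yields $N(u_1, u_2) = \{v\}$. Hence $G - v$ contains no path of length $2$ and no isolated vertex, so it is $1$-regular, i.e.\ a perfect matching, giving $G \simeq D_\ell$ with $\ell = (|V(G)| - 1)/2$. The main obstacle is clearly the existence of a dominating vertex: Friendship handles this abstractly, whereas a self-contained planar derivation requires careful bookkeeping to extract the forbidden Kuratowski minor.
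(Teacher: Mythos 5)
Your proof is correct, but the forward direction takes a genuinely different route from the paper's. Like the paper, you first observe that $A_2(G)=\{1\}$ forces every open neighbourhood to induce a perfect matching (so all degrees are even and each closed neighbourhood is a windmill), but you then close the argument by invoking the Friendship Theorem of Erd\H{o}s, R\'enyi and S\'os, whose hypothesis is exactly that every pair of distinct vertices has one common neighbour and whose conclusion is exactly $G\simeq D_\ell$; this is a complete proof, and it in fact establishes the characterisation for \emph{all} finite graphs, with planarity playing no role. The paper instead stays self-contained: assuming $G\not\simeq D_\ell$, it extracts a chain of three triangles $w_i,v_i,v_{i+1}$, uses the already-established Lemma \ref{le:4cy} (no $4$-cycles, since $2\notin A_2(G)$) to force two further triangles and the pairwise non-adjacency of $w_1,\dots,w_5$, and then the required common neighbours of $\{w_1,w_3\}$ and $\{w_2,w_4\}$ produce either a $4$-cycle or a planarity violation. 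Your route buys brevity and extra generality at the cost of importing a nontrivial classical theorem (normally proved by a counting or eigenvalue argument); the paper's route buys a short, elementary argument in which planarity does the heavy lifting. One caveat: your proposed self-contained planar alternative --- iterating around a maximum-degree vertex until a $K_{3,3}$-subdivision appears --- is only a sketch whose key step (``should produce enough new vertices \dots'') is not carried out, so it could not stand on its own; your proof is complete only via the Friendship Theorem.
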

\begin{proof}
Let $A_2(G)=\{1\}$. Given $v\in V(G)$ with $N(v)=\{v_1,v_2,\dots,v_n\}$, then $v,v_1$ have exactly one common neighbour, that we may assume is $v_2$. Similarly, $v,v_3$ have exactly one common neighbour, that we may assume is $v_4$, and continuing in this fashion, we deduce that every $v\in V(G)$ has even degree, and its closed neighbourhood $\Gamma_G[v]$ is isomorphic to $D_{\deg_G(v)/2}$ (cf.\ \cite[Lemma 3.6]{maffucci2025classification}).

By contradiction, let $G\not\simeq D_\ell$ for every $\ell\geq 1$. Then we may assume that $G$ contains the triangles $w_i,v_i,v_{i+1}$ for $1\leq i\leq 3$, as in Figure \ref{fig:vwa}. We note that there can be no more edges between the vertices $v_1,v_2,v_3,v_4,w_1,w_2,w_3$ without obtaining a $4$-cycle, that would contradict Lemma \ref{le:4cy}. Therefore, $N(v_1,v_4)=\{v_5\}$ where $v_5$ is a new vertex. Next, for $i\in\{1,4\}$, $\Gamma_G[v_i]\simeq D_{\ell_i}$ with $\ell_i\geq 2$, thus we draw new vertices $w_4$ adjacent to $v_4,v_5$ and $w_5$ adjacent to $v_5,v_1$, as in Figure \ref{fig:vwb}. Note that $w_4\neq w_5$ otherwise $G$ would contain the $4$-cycle $v_1,v_5,v_4,w_4$.
\begin{figure}[ht]
	\centering
	\begin{subfigure}{0.48\textwidth}
		\centering
		\includegraphics[width=4.cm]{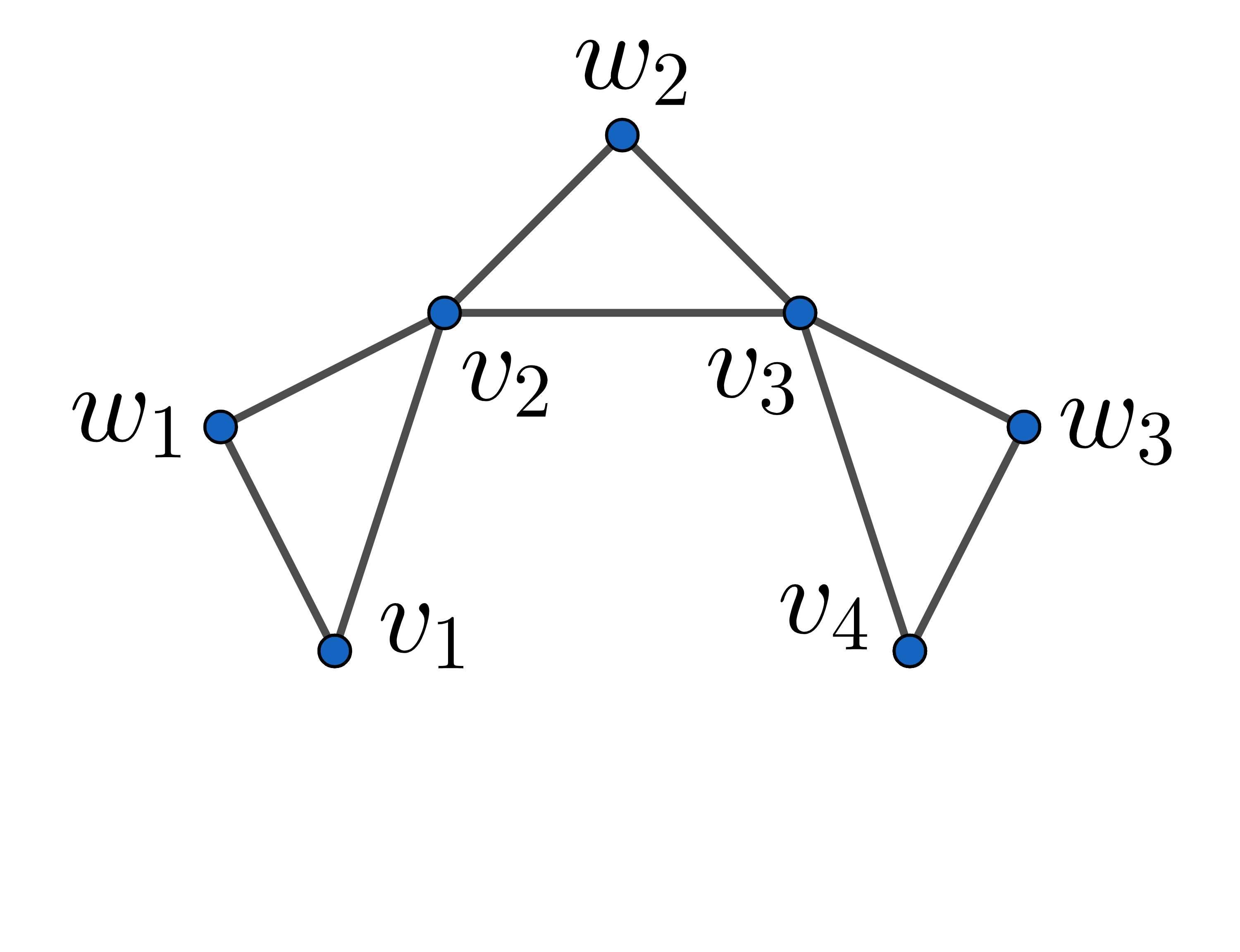}
		\caption{$G$ contains the triangles $w_i,v_i,v_{i+1}$ for $1\leq i\leq 3$.}
		\label{fig:vwa}
	\end{subfigure}
	\hfill
	\begin{subfigure}{0.48\textwidth}
		\centering
		\includegraphics[width=4.cm]{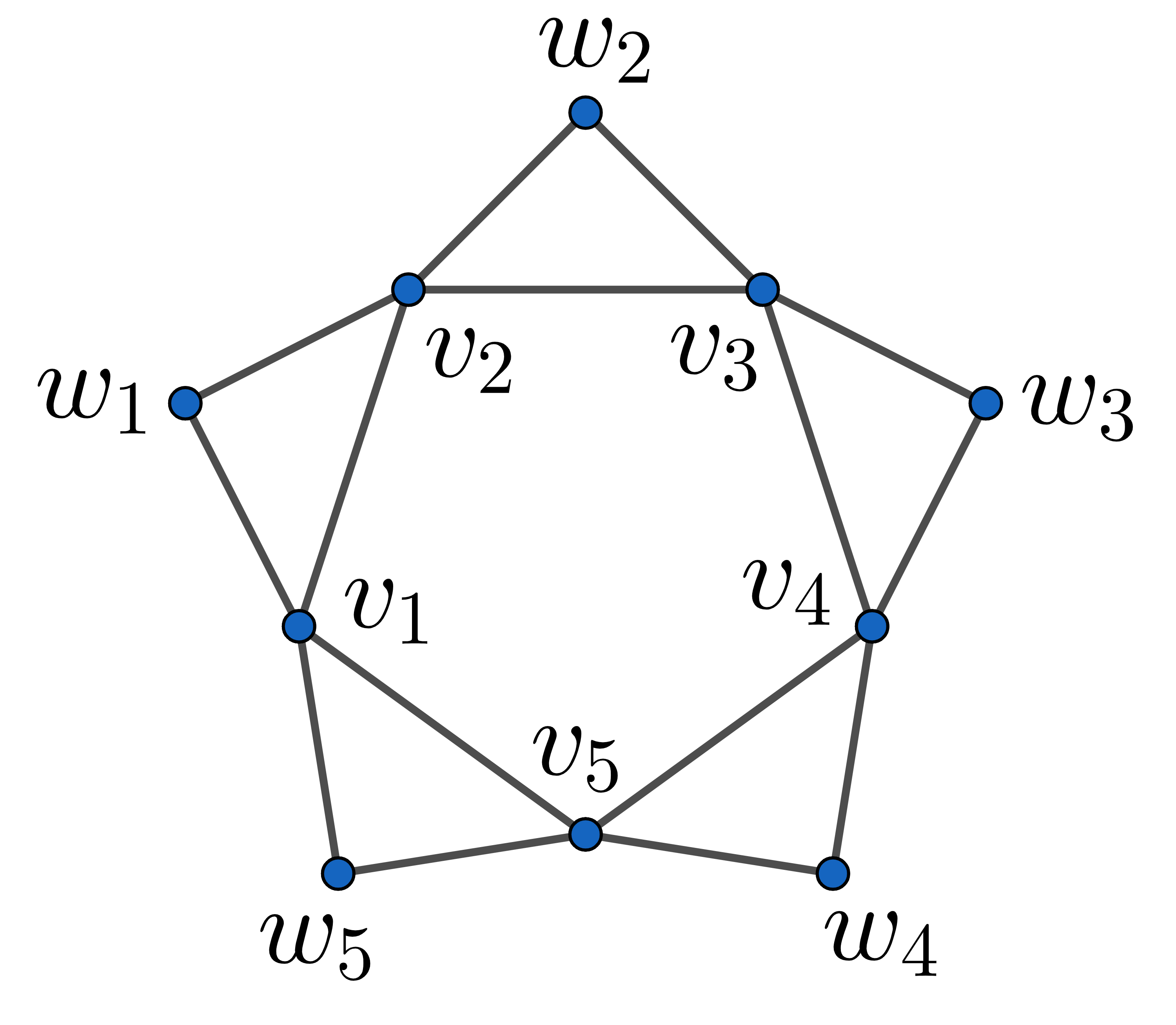}
		\caption{We draw new vertices $w_4$ adjacent to $v_4,v_5$ and $w_5$ adjacent to $v_5,v_1$.}
		\label{fig:vwb}
	\end{subfigure}
	\caption{Lemma \ref{le:1}.}
	\label{fig:vw}
\end{figure}

The vertices $w_1,w_2,w_3,w_4,w_5$ are pairwise non-adjacent, since $G$ contains no $4$-cycles. Thus $N(w_1,w_3)=\{x\}$ and $N(w_2,w_4)=\{y\}$, where $x,y$ are new vertices. If $x\neq y$ we have contradicted planarity, whereas if $x=y$ we have constructed the $4$-cycle $v_3,w_2,x,w_3$.
\end{proof}

Until this moment, we have analysed the case $1\in A_2$, $2\not\in A_2$. On the other hand, when $1,2\in A_2(G)$ we have the following.
\begin{lemma}
	\label{le:vast}
	Let $A'$ be a (possibly empty) set of integers $\geq 3$. Then we may construct a vast class of planar, connected graphs satisfying
	\[A_2(G)=\{1,2\}\cup A',\]
	and we may construct a vast class of planar, connected graphs satisfying
	\[A_2(G)=\{0,1,2\}\cup A'.\]
\end{lemma}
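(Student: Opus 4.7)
The plan is to build, for each prescribed $A' = \{a_1,\dots,a_k\}$ of integers $\geq 3$, a one-parameter family of cones over a carefully chosen outerplanar tree. Concretely, I would construct an outerplanar tree $H = H_{A'}$ whose degree set sits in $\{1,2\}\cup A'$ and actually attains each $a_i$: for $A' = \emptyset$ take $H = P_m$ with $m \geq 3$; for $A' \neq \emptyset$ take a caterpillar with spine $w_1,\dots,w_k$, attaching $a_i-2$ leaves to each internal $w_i$ and $a_i-1$ leaves to each endpoint, so that $\deg_H(w_i)=a_i$. Then set $G := H + K_1$ with apex $v$; since $H$ is outerplanar, $G$ is planar, and it is connected because $v$ dominates.

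The computation of $A_2(G)$ splits naturally in two. For a pair $u_1,u_2 \in V(H)$ the apex is always a common neighbour, and since $H$ is a tree (no $4$-cycle) there is at most one further common neighbour, so
\[
|N_G(u_1,u_2)| \;=\; |N_H(u_1,u_2)| + 1 \;\in\; \{1,2\},
\]
with $2$ attained by any two leaves sharing a parent (which exists as soon as $|V(H)| \geq 3$). For a pair $(v,u)$ involving the apex, $|N_G(v,u)| = \deg_H(u)$, which ranges over $\{1\} \cup A'$. The union is exactly $\{1,2\} \cup A'$, giving the first family.

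For the second family, attach a single pendant vertex $w$ to an arbitrary $u_0 \in V(H)$, forming $G'$; planarity and connectedness persist. The only new pair-values come from pairs containing $w$, and a direct check shows $|N_{G'}(w,u_0)|=0$, $|N_{G'}(v,w)|=1$, and $|N_{G'}(w,z)|\in\{0,1\}$ for every other $z$. Combined with the unchanged values on $V(H) \cup \{v\}$, this yields $A_2(G') = \{0,1,2\}\cup A'$. For vastness, in either case one can freely subdivide the spine edges of the caterpillar (inserting degree-$2$ vertices, which keeps the degree set inside $\{1,2\}\cup A'$), vary which vertex $u_0$ receives the pendant, or replace leaves by small degree-preserving gadgets; each operation preserves $A_2$ and yields infinitely many pairwise non-isomorphic examples.

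The main thing to watch is bookkeeping: one must verify that no stray common-neighbour value escapes the target set after each modification. The point that makes this manageable is that $H$ being a tree forces $|N_H(u_1,u_2)| \leq 1$, so pair-in-$H$ contributions are capped at $2$ after coning, while the degrees of $H$ (which we designed) are exactly what controls the apex contributions. Once that dichotomy is in place, verifying each item of the lists above reduces to listing a handful of cases depending on whether a pair meets $v$, $w$, or $u_0$.
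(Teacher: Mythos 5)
Your proposal is correct and follows essentially the same route as the paper: cone an outerplanar forest over a dominating apex, so that apex--vertex pairs contribute the base degrees while pairs inside the tree contribute only $1$ or $2$. The only (immaterial) differences are that the paper obtains the value $0$ by allowing isolated vertices in the base graph rather than by attaching a pendant vertex after coning, and that in your degenerate case $|A'|=1$ the single spine vertex should receive $a_1$ leaves rather than $a_1-1$.
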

\begin{proof}
We start with an outerplanar graph $G'$ such that
\[A_2(G')=\{0,1\}.\]
Due to Corollary \ref{cor:01}, this is exactly the class of outerplanar graphs $\not\simeq D_\ell$ and containing a $4$-cycle (see Appendix \ref{app:b} for further details). For instance, $G'$ could be any forest. Next, we consider the graph $G=G'+K_1$, where we have added a dominating vertex $x$. Then for every distinct $u,v\in V(G')$ we have
\[|N_G(u,x)|=\deg_{G'}(u)\]
and
\[|N_G(u,v)|=|N_{G'}(u,v)|+1\in\{1,2\}.\]
Therefore,
\[A_2(G)=\{1,2\}\cup\{\deg_{G'}(u) : u\in V(G')\}.\]
It then suffices to consider $G'$ such that
\[\{\deg_{G'}(u) : u\in V(G'),\ \deg_{G'}(u)\geq 3\}=A'.\]
If $G'$ has isolated vertices, then
\[A_2(G)=\{0,1,2\}\cup A',\]
otherwise
\[A_2(G)=\{1,2\}\cup A'.\]
There are wide classes of solutions seeing as e.g.\ there is a wide class of forests $G'$ satisfying $\{\deg_{G'}(u) : u\in V(G'),\ \deg_{G'}(u)\geq 3\}=A'$ for every given set $A'\neq\emptyset$, whereas if $A'=\emptyset$, we may take for $G'$ any disjoint union of cycles all of length $\neq 4$ (and isolated vertices if we wish that $0\in A_2(G)$).
\end{proof}

\subsection{$1\not\in A_2(G)$}
In the present section, we will complete the proof of Theorem \ref{thm:3}. Let $G$ be a planar, connected graph such that $1\not\in A_2(G)$. Firstly, we note that if $z_0$ is a separating vertex of $G$ and $z_1,z_2\in V(G)$ are in distinct connected components of $G-z_0$, then $N(z_1,z_2)=\{z_0\}$. Therefore, $G$ is $2$-connected. On the other hand, the planar, $3$-connected graphs satisfying $1\not\in A_2$ were classified in \cite[Table 1]{maffucci2025classification}. In the present section it remains to classify the planar graphs of connectivity $2$ satisfying $1\not\in A_2$. Recall the definitions \eqref{eq:tm} and \eqref{eq:Tp}.

\begin{lemma}
	\label{le:exc}
	Let $G$ be a planar graph of connectivity $2$ satisfying $0,1\not\in A_2(G)$. Then either $A_2(G)=\{2,\ell\}$ and $G\simeq T'_\ell$ with $\ell$ even, or $A_2(G)=\{2,3,\ell\}$ and $G\in\ct_\ell$, $G\not\simeq T'_\ell$.
\end{lemma}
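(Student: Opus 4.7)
The plan is to exploit the connectivity-$2$ hypothesis to extract a $2$-cut $\{x,y\}$ with strong constraints, then analyse $H:=G-\{x,y\}$ by planarity. Since $\{x,y\}$ is a $2$-cut, for any two $z_1,z_2$ in distinct components of $H$ one has $N(z_1,z_2)\subseteq\{x,y\}$; the hypothesis $0,1\notin A_2(G)$ then forces $N(z_1,z_2)=\{x,y\}$, so every vertex of $H$ is adjacent to both $x$ and $y$. If some $v\in V(H)$ had three $H$-neighbours $u_1,u_2,u_3$, the bipartition $\{x,y,v\}$ versus $\{u_1,u_2,u_3\}$ would yield a $K_{3,3}\subseteq G$, contradicting planarity; hence $\Delta(H)\leq 2$, so $H$ is a disjoint union of paths and cycles.

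Next I would show that $xy\in E(G)$. If not, then for any $u\in V(H)$ one computes $N_G(x,u)=V(H)\cap(\{x,y\}\cup N_H(u))=N_H(u)$, forcing $|N_H(u)|\geq 2$; combined with $\Delta(H)\leq 2$, $H$ is $2$-regular, hence a disjoint union of cycles. Picking a cycle component with three consecutive vertices $u_1,u_2,u_3$ and a vertex $v'$ from another component (one exists since $H$ is disconnected), I would exhibit a topological $K_5$ on $\{x,y,u_1,u_2,u_3\}$: all direct adjacencies are present, $xy$ is replaced by the path $x\,v'\,y$, and (when the cycle has length $\geq 4$) $u_1u_3$ is replaced by the remaining arc of the cycle. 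This contradicts planarity, so $xy\in E$. Under $xy\in E$, an isolated vertex $u$ of $H$ would give $N_G(x,u)=\{y\}$, contradicting $1\notin A_2$; a cycle component $C_3$ would make $\{x,y\}\cup V(C_3)\simeq K_5$; and a cycle component $C_k$ with $k\geq 4$ would yield a subgraph of $k+2$ vertices and $3k+1$ edges, exceeding the planar bound $3(k+2)-6=3k$. Hence $H$ is a disjoint union of paths on $\geq 2$ vertices each, and $G=H+K_2\in\ct_\ell$ with $\ell=|V(H)|$.

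The computation of $A_2(G)$ is then direct: $|N(x,y)|=\ell$; for $u\in V(H)$ one has $|N(x,u)|=|N(y,u)|=1+|N_H(u)|\in\{2,3\}$; and for distinct $u,u'\in V(H)$, planarity forces $N(u,u')=\{x,y\}$ unless $u,u'$ lie at distance $2$ on a common path of $H$, in which case $|N(u,u')|=3$. Therefore $A_2(G)\subseteq\{2,3,\ell\}$, always contains $\{2,\ell\}$, and contains $3$ precisely when some path of $H$ has at least three vertices, equivalently $H\not\simeq\bigsqcup K_2$. This gives the stated dichotomy: $A_2(G)=\{2,\ell\}$ with $G\simeq T'_\ell$ (and $\ell$ even) when $H\simeq\bigsqcup K_2$, versus $A_2(G)=\{2,3,\ell\}$ with $G\in\ct_\ell\setminus\{T'_\ell\}$ otherwise. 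The principal obstacle is ruling out $xy\notin E$, for which the availability of $v'$ in a separate component (forced by $H$ being disconnected) is essential to building the $K_5$ subdivision.
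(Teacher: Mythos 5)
Your proof is correct, and it reaches the conclusion by the same overall decomposition as the paper: fix a $2$-cut $\{x,y\}$, use the condition $0,1\not\in A_2$ across distinct components of $G-x-y$ to force every remaining vertex to be adjacent to both $x$ and $y$, and then pin down the structure of $H=G-x-y$. Where you diverge is in how planarity is brought to bear. The paper works with an explicit plane embedding: it fixes the cyclic order of $N(x,y)$ around $x$, rules out extra vertices by a connectivity argument inside a quadrangle $x,w_i,y,w_{i+1}$, and reads off $xy\in E$ and the path structure of $H$ from containments like $N(x,w_1)\subseteq\{y,w_2\}$. You instead avoid any embedding and argue via forbidden subgraphs and edge counting: $K_{3,3}$ to get $\Delta(H)\leq 2$, a $K_5$-subdivision (routing $xy$ through a vertex of a second component, whose existence is exactly what the $2$-cut guarantees) to force $xy\in E$, and then $K_5$ or the bound $q\leq 3p-6$ to exclude cycle components and isolated vertices of $H$. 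Your route also makes the step ``$V(G)=\{x,y\}\cup N(x,y)$'' essentially automatic, since you start from $H=G-x-y$ rather than from $N(x,y)$. The final computation of $A_2$ on $H+K_2$ matches the paper's, including the characterisation of $3\not\in A_2$ as $H\simeq\bigcup K_2$, i.e.\ $G\simeq T'_\ell$ with $\ell$ even; and since $H$ is disconnected with all components of order at least $2$, one indeed has $\ell\geq 4$, so the two cases $\{2,\ell\}$ and $\{2,3,\ell\}$ are as stated. The embedding-free treatment is arguably more robust, at the cost of a slightly longer case analysis on the components of $H$.
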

\begin{proof}
	Let $\{x,y\}$ be a $2$-cut in $G$, and $w_1,w_2$ neighbours of $x$ in distinct components of $G-x-y$. Since $|N(w_1,w_2)|\neq 1$, the only possibility is $N(w_1,w_2)=\{x,y\}$. Thus we may write
	\[N(x)=N(y)=N(x,y)=\{w_1,w_2,\dots,w_\ell\},\qquad\ell\geq 2,\]
	hence by Lemma \ref{le:2} we have
	\[\{2,\ell\}\subseteq A_2(G).\]
	We may assume that $w_1,w_2,\dots,w_\ell$ appear in this cyclic order around $x$ in a planar immersion of $G$.
	
	We claim that
	\begin{equation}
		\label{eq:vset}
		V(G)=\{x,y,w_1,w_2,\dots,w_\ell\}.
	\end{equation}
	By contradiction, let $u$ be a vertex inside of the quadrangle $x,w_i,y,w_{i+1}$ for some $1\leq i\leq\ell-1$. Since $x,y$ are not adjacent to any vertices inside of $x,w_i,y,w_{i+1}$, and since $G-w_i$ and $G-w_{i+1}$ are connected graphs, it follows that there is a $w_iw_{i+1}$-path in $G-x-y$ containing $u$. We deduce that $w_i,w_{i+1}$ are in the same connected component of $G-x-y$. Letting $w_j$ be in a distinct component of $G-x-y$, we obtain $N(u,w_j)=\emptyset$, contradiction.
	
	Hence indeed we have \eqref{eq:vset}. We may assume that $w_1,w_{\ell}$ are in distinct components of $G-x-y$. By planarity,
	\[N(x,w_1)\subseteq\{y,w_{2}\},\]
	hence as $0,1\not\in A_2(G)$, in fact we have $xy,w_1w_{2}\in E(G)$. Similarly, $w_\ell w_{\ell-1}\in E(G)$, and in particular $\ell\geq 4$. Likewise, for each $2\leq i\leq\ell-1$, $w_i$ is adjacent to at least one of $w_{i-1},w_{i+1}$. It follows that $G=H+K_2$, where $H$ is a subgraph of the path $P_\ell$ with no isolated vertices i.e., $G\in\ct_\ell$.
	
	Conversely, let $G\in\ct_\ell$, $\ell\geq 4$, where the vertices along the path $P_\ell$ are labeled $w_1,w_2,\dots,w_\ell$ and $x,y$ are the remaining vertices. Then we have $|N(x,y)|=\ell$. Further, for every $1\leq i<j\leq\ell$, if $j=i+2$ we have
	\[\{x,y\}\subseteq N(w_i,w_{i+2})\subseteq\{x,y,w_{i+1}\},\]
	otherwise we have
	\[N(w_i,w_{j})=\{x,y\}.\]
	We also have for every $2\leq i\leq\ell-1$,
	\[N(x,w_i)\in\{\{y,w_{i-1}\},\{y,w_{i+1}\},\{y,w_{i-1},w_{i+1}\}\}\]
	and the above holds also if we swap $x,y$. It follows that
	\[A_2(G)\in\{\{2,\ell\},\{2,3,\ell\}\},\]
	and moreover $3\not\in A_2(G)$ if and only if $\ell$ is even and
	\[G\simeq \left(\bigcup_{i=1}^{\ell/2}K_2\right)+K_2\simeq T'_\ell.\]
\end{proof}

We now focus on the remaining case $0,2\in A_2$, $1\not\in A_2$. Recall that $S'$ is the graph in Figure \ref{fig:s11}.
\begin{lemma}
	\label{le:023}
	Let $G$ be a planar graph of connectivity $2$ satisfying $A_2(G)\in\{\{0,2\},\{0,2,3\}\}$. Then $G\in\{K_{2,2},K_{2,3},S'\}$.
\end{lemma}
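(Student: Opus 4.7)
The plan is to reprise and sharpen the opening move of Lemma \ref{le:exc}. Pick a 2-cut $\{x, y\}$ of $G$, and let $w_1 \in C_1$ and $w_2 \in C_2$ be neighbours of $x$ in two distinct components $C_1, C_2$ of $G - x - y$. Since $N(w_1, w_2) \subseteq \{x, y\}$ contains $x$, and $|N(w_1, w_2)| \neq 1$ by $1 \notin A_2(G)$, we obtain $N(w_1, w_2) = \{x, y\}$, so $y$ is adjacent to both $w_1$ and $w_2$. The same argument applied to any neighbour $v$ of $x$ (paired with a neighbour $w_j$ of $x$ in a different component) shows $v \in N(y)$, hence $N(x) \setminus \{y\} = N(y) \setminus \{x\} = S$ where $S := N(x, y)$. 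Moreover $|S| \in A_2(G) \subseteq \{0, 2, 3\}$, and since $w_1, w_2 \in S$ one has $|S| \in \{2, 3\}$; writing $k \ge 2$ for the number of components of $G - x - y$, one obtains $2 \le k \le |S| \le 3$.

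Next I would rule out $xy \in E(G)$. Since $k \ge 2$ and $|S| \le 3$, at least one $w_i \in S$ sits alone in its component, so $w_i$ is non-adjacent to every other element of $S$. If $xy \in E(G)$, then $y \in N(x, w_i)$, and any other common neighbour of $x$ and $w_i$ would have to lie in $S \setminus \{w_i\}$ and be adjacent to $w_i$, which is impossible. Thus $|N(x, w_i)| = 1$, contradicting $1 \notin A_2(G)$. Hence $xy \notin E(G)$ and $N(x) = N(y) = S$.

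For the remainder I would split on $(|S|, k)$. The key observation is this: for any $u \in C_i \setminus S$ one has $N(u, x) \subseteq S \cap N(u) \subseteq S \cap C_i$ (because $u$ has no edge to other components), and the forbidden value $1$ then forces $|S \cap C_i| \neq 1$ as soon as $u$ is adjacent to an element of $S \cap C_i$. Combined with the connectedness of $C_i$, this yields $C_i = \{w_i\}$ whenever $|S \cap C_i| = 1$. Hence when $|S| = 2$ (forcing $k = 2$) or when $|S| = 3, k = 3$, every $C_i$ is a singleton, so $G \simeq K_{2,2}$ or $G \simeq K_{2,3}$ respectively.

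The remaining case is $|S| = 3, k = 2$, say $S \cap C_1 = \{w_1, w_2\}$ and $S \cap C_2 = \{w_3\}$, whence $C_2 = \{w_3\}$ by the above. Computing $N(x, w_1) \subseteq \{w_2\}$ forces $w_1 w_2 \notin E(G)$, so connectedness of $C_1$ demands a vertex $u \in C_1 \setminus \{w_1, w_2\}$ adjacent to both $w_1$ and $w_2$. Since $\{x, y, u\} \subseteq N(w_1, w_2)$ and $|N(w_1, w_2)| \in \{0, 2, 3\}$, we must have $|N(w_1, w_2)| = 3$, so $u$ is unique, and one pins down $N(w_1) = N(w_2) = \{x, y, u\}$. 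A short computation of $N(v, w_1) = \{u\}$ for any hypothetical further neighbour $v$ of $u$ in $C_1$ then forces $C_1 = \{w_1, w_2, u\}$, whence $G \simeq S'$. I expect the main obstacle to be executing this last sub-case cleanly, since the argument weaves the upper bound $|N(\cdot, \cdot)| \le 3$ with the forbidden value $1$ several times, and the exact sets $N(w_1), N(w_2)$ must be pinned down before the final contradiction is available.
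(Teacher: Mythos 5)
Your argument is correct and follows essentially the same route as the paper's: extract the $2$-cut $\{x,y\}$ with $N(x)=N(y)=N(x,y)=S$, bound $|S|\in\{2,3\}$ via $A_2(G)\subseteq\{0,2,3\}$, and case-split on how $S$ distributes over the components of $G-x-y$, using the forbidden value $1$ to collapse each component onto its $S$-vertices. If anything, your write-up is slightly more careful than the paper's (e.g.\ the explicit exclusion of $xy\in E(G)$ and the explicit argument that a component containing a single element of $S$ is a singleton).
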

\begin{proof}
	The first argument of Lemma \ref{le:exc} shows that since $G$ is of connectivity $2$ and $1\not\in A_2(G)$, then there exist $\ell\geq 2$ and $x,y,w_1,w_2,\dots,w_\ell\in V(G)$ satisfying
	\[N(x)=N(y)=N(x,y)=\{w_1,w_2,\dots,w_\ell\},\]
	thus
	\[\{2,\ell\}\subseteq A_2(G).\]
	If $\ell=2$, then $w_1,w_2$ are in distinct components of $G-x-y$, so that we may not add any further vertices to $G$ without breaking $2$-connectivity. Hence in this case $G\simeq K_{2,2}$.
	
	Now assume instead that $\ell=3$. If $G-x-y$ has three connected components, then as above we have $G\simeq K_{2,3}$ by $2$-connectivity. The only other possibility is that w.l.o.g.\ $w_1,w_2$ are in the same component of $G-x-y$. Any additional vertex of $G$ is adjacent to both of $w_1,w_2$. Since $w_1,w_2$ already have the two common neighbours $x,y$, then there may be at most one additional vertex. If there is one, then $G\simeq S'$.
\end{proof}

On the other hand, we have the following.
\begin{lemma}
	Let $A'\neq\emptyset,\{3\}$ be a set of integers $\geq 3$. Then we may construct a vast class of planar, connected graphs satisfying 
	\[A_2(G)=\{0,2\}\cup A'.\]
\end{lemma}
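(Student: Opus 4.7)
The plan is to construct, for each admissible $A'$, first a single planar connected ``seed'' graph $G_\star$ with $A_2(G_\star)=\{0,2\}\cup A'$, and then to show that $G_\star$ admits infinitely many distinct extensions preserving $A_2$. Write $A'=\{a_1,\dots,a_s\}$ with each $a_i\geq 3$; the hypothesis $A'\neq\emptyset,\{3\}$ guarantees $a_\star:=\max A'\geq 4$.

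The central tool will be a \emph{bud} operation at a non-adjacent pair $(p,q)$ of vertices satisfying $N(p)=N(q)=\{z_1,z_2\}$: given any $a\geq 3$, I add $a-2$ new vertices, each adjacent only to $p$ and $q$. A direct computation, in the spirit of the calculations in the proof of Lemma~\ref{le:vast}, should verify four properties. First, the pair $(p,q)$ acquires $|N(p,q)|=a$. Second, each added vertex $u$ satisfies $|N(u,v)|\in\{0,2\}$ for every old $v$: the value is $2$ if $v\in\{z_1,z_2\}$ and $0$ otherwise, because the hypothesis $N(p)=N(q)$ rules out any old vertex adjacent to exactly one of $p,q$. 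Third, the common neighborhood of every old pair other than $(p,q)$ is unchanged. Fourth, the newly added vertices themselves share neighborhood $\{p,q\}$, so any two of them form a new pair on which the bud operation can be iterated. Planarity is preserved: the assumption $|N(p,q)|=2$ with $N(p)=N(q)$ guarantees that $p,z_1,q,z_2$ form a $4$-cycle bounding a face of a suitable planar embedding, inside which the $a-2$ new vertices can be drawn as a planar $K_{2,a-2}$ on $\{p,q\}$.

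The seed would begin with $G_0=K_{2,a_\star}$, which is planar and connected with $A_2(G_0)=\{0,2,a_\star\}$; every pair $(w_i,w_j)$ is bud-eligible since $N(w_i)=N(w_j)=\{x,y\}$. For each remaining $a\in A'\setminus\{a_\star\}$ (processed in any order), I would perform a bud with value $a$ at a not-yet-used pair: either at some $(w_i,w_j)$ disjoint from previously used pairs, or, once those are exhausted, at a pair of newly added vertices of a previous bud, which share a common neighborhood and are therefore bud-eligible. After processing all of $A'$, the seed $G_\star$ would satisfy $A_2(G_\star)=\{0,2\}\cup A'$.

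Vastness would follow by continuing to bud with value $a_\star$: each such operation preserves $A_2$ (since $a_\star\in A'$ already) while strictly increasing the vertex count. Because a bud with value $a_\star\geq 4$ creates at least two new vertices with identical neighborhood, eligible pairs for further budding always remain, so the construction iterates indefinitely and produces infinitely many pairwise non-isomorphic graphs with the desired $A_2$. The main obstacle will be maintaining bud-eligibility across iterations; this is precisely why I impose the strong condition $N(p)=N(q)$ rather than merely $|N(p,q)|=2$, since only the stronger condition is automatically preserved (any two new vertices of one bud share the exact neighborhood $\{p,q\}$, ready for further budding).
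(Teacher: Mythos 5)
Your proposal is correct and essentially the same as the paper's: both start from a complete bipartite seed $K_{2,a}$ and repeatedly attach $K_{2,a_j-2}$ onto a non-adjacent pair of degree-$2$ vertices with identical neighbourhoods (the paper's transformation $\ft_{a_j}$ is exactly your bud), obtaining vastness by repeating the operation with values already in $A'$. The only differences are immaterial bookkeeping choices (you seed with $\max A'$ and bud at disjoint pairs of the seed, the paper seeds with $\min A'$ and chains each bud onto vertices created by the previous one).
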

\begin{proof}
	We write
	\[A'=\{a_1,a_2,\dots,a_{|A'|}\}\]
	with elements in ascending order. we begin by sketching the graph $K_{2,a_1}$, with edges
	\[u_{0,1}u_{1,i},u_{0,2}u_{1,i}, \qquad 1\leq i\leq a_1.\]
	Then for $2\leq j\leq|A'|$ we perform the transformation
	\[\ft_{a_j} : H\to H+u_{j-1,1}u_{j,i}+u_{j-1,2}u_{j,i}, \qquad 1\leq i\leq a_j-2,\]
	where addition means we are inserting the relevant vertices and edges. Note that at each step we introduce at least two new vertices, since $a_j\geq 4$ for every $j\geq 2$.
	
	In the resulting final graph $G$ we have
	\[|N(u_{j-1,1},u_{j-1,2})|=a_{j},\qquad 1\leq j\leq |A'|\]
	and
	\[N(u_{0,1},u_{1,1})=\emptyset.\]
	Moreover, by construction whenever two vertices $x,y$ have common neighbours, they have either $2$ or $a_j$ of them for some $1\leq j\leq |A'|$. Therefore,
	\[A_2(G)=\{0,2\}\cup A'.\]
	We obtain a vast class of graphs by performing additional transformations $\ft_{a}$ for $a\in A$ such that $a\geq 4$, and by permuting the order of transformations in any way.
\end{proof}

The results of Section \ref{sec:3} have populated Table \ref{t:1}, completing the proof of Theorem \ref{thm:3}.

\appendix
\section{Common neighbours of polyhedra}
\label{app:a}

Recall the definition of the classes of graphs $\ct_m$ \eqref{eq:tm} and $\cq_m$ \eqref{eq:qm}. Clearly a member of $\ct_m$, $m\geq 2$ is a polyhedron if and only if it is $T_m$ of Figure \ref{fig:c1}. To classify polyhedra according to $A_n(G)$ via Theorems \ref{thm:1} and \ref{thm:2}, it remains to determine which elements of $\cq$ are polyhedra. Recall the families of graphs $B_\ell,B'_\ell$ of Figures \ref{fig:c2} and \ref{fig:c3}.

\begin{lemma}
	\label{le:B}
	Let $G$ be a polyhedron satisfying $G\in\cq_m$. Then either $m=3$ and $G$ is the octahedron $B_4$, or $m=5$ and $G\in\{B_\ell,B'_\ell\}$ for some $\ell\geq 5$.
\end{lemma}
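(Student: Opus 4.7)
The plan is to pick a vertex $v$ of maximum degree $\Delta$ in $G$, exploit its twin $w$ (provided by the definition of $\cq_m$, since $\Delta \geq m$), and deduce that $G$ must be obtained from the spanned $K_{2,\Delta}$ on $\{v, w, u_1, \ldots, u_\Delta\}$, where $N(v) = N(w) = \{u_1, \ldots, u_\Delta\}$, by adding only ``consecutive'' edges among the $u_i$'s. Fixing the essentially unique planar embedding of $G$, since $v$ and $w$ are non-adjacent and share their full neighbourhood, I may label so that $u_1, u_2, \ldots, u_\Delta$ appear in this cyclic order around $v$ and in the reverse cyclic order around $w$. Then the $K_{2,\Delta}$ subgraph partitions the plane into $\Delta$ quadrilateral windows $W_i = [v, u_i, w, u_{i+1}]$ (indices mod $\Delta$), and every further vertex and edge of $G$ must lie inside some window.

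The next step is to show that $V(G) = \{v, w, u_1, \ldots, u_\Delta\}$ and that the only additional edges are the consecutive chords $u_i u_{i+1}$. For the vertex set, any putative extra vertex $z$ lies inside some $W_i$; it is not adjacent to $v$ or $w$, so all its $G$-neighbours lie in $\{u_i, u_{i+1}\}$ together with other extras inside $W_i$, forcing $\{u_i, u_{i+1}\}$ to be a $2$-cut and contradicting $3$-connectivity. For the edges, any $u_i u_j$ drawn inside $W_k$ requires both its endpoints on the boundary of $W_k$, which forces $\{i,j\} = \{k, k+1\}$. Thus the induced subgraph $H$ on $\{u_1, \ldots, u_\Delta\}$ is a spanning subgraph of $C_\Delta$; moreover $H = G - v - w$ must be connected (otherwise $\{v,w\}$ would be a $2$-cut of $G$), and the only connected spanning subgraphs of $C_\Delta$ are $C_\Delta$ and $P_\Delta$, yielding $G \simeq B_\Delta$ or $G \simeq B'_\Delta$.

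To finish, I would rule out $\Delta = 3$ and then analyse the cases $\Delta = 4$ and $\Delta \geq 5$. When $\Delta = 3$, $3$-connectivity combined with $\Delta = 3$ forces every $u_i$ to have exactly one neighbour in $H$, but a graph on three vertices with every vertex of degree one has odd degree sum, a contradiction. For $\Delta = 4$, the octahedron $B_4$ has every vertex contained in a twin pair and so lies in $\cq_3$, whereas in $B'_4$ the path-interior vertices $u_2, u_3$ have degree $4$ but distinct neighbourhoods and thus no twin, so $B'_4 \notin \cq$. For $\Delta \geq 5$, in both $B_\Delta$ and $B'_\Delta$ the twin pair $\{v, w\}$ consists of the only vertices of degree exceeding $4$ and the remaining $u_i$'s (all of degree at most $4$) have pairwise distinct neighbourhoods, so both graphs lie in $\cq_5$.

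The main obstacle is the combined planarity-plus-$3$-connectivity argument in the second paragraph, which excludes any vertex or edge outside the $K_{2,\Delta}$-plus-consecutive-chords structure. A priori the twin hypothesis alone allows a rich attachment inside each window $W_i$, and it is Whitney's uniqueness of the embedding of polyhedra together with the prohibition of $2$-cuts that collapses the picture down to $B_\Delta$ or $B'_\Delta$.
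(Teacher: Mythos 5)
Your proof is correct and follows essentially the same route as the paper: extract the twin pair $\{v,w\}$ of a high-degree vertex, use planarity and $3$-connectivity to show that no vertex lies inside a window $[v,u_i,w,u_{i+1}]$ and that the only extra edges are consecutive chords $u_iu_{i+1}$, concluding $G\simeq B_\Delta$ or $B'_\Delta$, and then check directly which of these belong to which $\cq_m$. The only cosmetic difference is that you justify the ``at most one missing chord'' step via connectedness of $G-v-w$, whereas the paper observes that two faces of a polyhedron cannot share two non-adjacent vertices; both are valid.
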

\begin{proof}
	Let $G\in\cq_m$ with $m\geq 3$. Take $x\in V(G)$ of degree $\ell\geq m$. By assumption, there exists $y\neq x$ such that
	\[N(x)=N(y)=\{w_1,w_2,\dots,w_\ell\}.\]
	We record that in particular $xy\not\in E(G)$. We may assume that $w_1,w_2,\dots,w_\ell$ appear in this order around $x$ in the planar immersion of $G$. Suppose by contradiction that there exists $u\in V(G)$ inside of the cycle
	\begin{equation}
		\label{eq:face}
		x,w_i,y,w_{i+1}
	\end{equation}
	(where indices $w_i$ are taken modulo $\ell$). Then $G-w_i-w_{i+1}$ is disconnected, contradicting the $3$-connectivity of $G$. Hence $|V(G)|=\ell+2$, and moreover either \eqref{eq:face} is a face or $w_iw_{i+1}\in E(G)$. Now at most one of
	\[x,w_i,y,w_{i+1} \quad 1\leq i\leq\ell\]
	may be a face, otherwise we would have in $G$ two faces with two common non-adjacent vertices. Therefore w.l.o.g.
	\[w_2,w_3,w_4,\dots,w_\ell,w_1\]
	form a path in $G$. It follows that
	\[G\simeq
	\begin{cases}
		B_\ell & w_1w_2\in E(G),\\
		B'_\ell & w_1w_2\not\in E(G).
	\end{cases}
	\]
	We have shown that if a polyhedron $G$ satisfies $G\in\cq_m$ for some $m\geq 3$, then it is one of $B_\ell,B'_\ell$ for some $\ell\geq m$. In the other direction, one checks that $B_4\in\cq_3$ and $B_\ell,B'_\ell\in\cq_5$ for every $\ell\geq 5$, whereas $B_3,B'_3,B'_4\not\in\cq$.
\end{proof}

We now have the following consequence of Theorem \ref{thm:2}. Recall the definition of $\fa$ \eqref{eq:fa}, $L$ \eqref{eq:L} and of the polyhedra in Figures \ref{fig:c} and \ref{fig:s}.
\begin{prop}
	Let $G$ be a polyhedron. Then we have
	\[
		\fa(G)=\{\{0,1,2\},\{0,1,2\},\dots,\underbrace{\{0,1,2\}}_{A_L},\{0,1\},\{0,1\},\dots,\{0,1\},\underbrace{\{0\}}_{A_{\Delta+1}},\{0\},\dots,\underbrace{\{0\}}_{A_p},\emptyset,\emptyset,\dots,\emptyset,\dots\}
	\]
	except for the cases
	\begin{align*}
		\fa(T_m)&=\{\underbrace{\{1,2\}}_{A_{3}},\{0,1,2\},\{0,1,2\},\dots,\{0,1,2\},\underbrace{\{0,1\}}_{A_{m+1}},\{0\},\emptyset,\emptyset,\dots,\emptyset,\dots\},\qquad m\geq 3,\\
		\fa(B_\ell)=\fa(B'_\ell)&=\{\{0,1,2\},\{0,1,2\},\{0,2\},\{0,2\},\dots,\underbrace{\{0,2\}}_{A_\ell},\{0\},\{0\},\emptyset,\emptyset,\dots,\emptyset,\dots\},\qquad\ell\geq 5,
		\\
		\fa(K_4)&=\{\{1\},\{0\},\emptyset,\emptyset,\dots,\emptyset,\dots\},\\
		\fa(B_4)&=\{\{0,2\},\{0,2\},\{0\},\{0\},\emptyset,\emptyset,\dots,\emptyset,\dots\},\\
		\fa(S_5)&=\{\{1,2\},\{0,1,2\},\{0,1\},\{0,1\},\{0\},\emptyset,\emptyset,\dots,\emptyset,\dots\},\\
		\fa(S_7)&=\{\{1,2\},\{0,1,2\},\{0,1\},\{0,1\},\{0\},\{0\},\emptyset,\emptyset,\dots,\emptyset,\dots\}.
	\end{align*}
\end{prop}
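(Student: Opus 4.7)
The plan is to specialise Theorem \ref{thm:2} to the case when $G$ is a polyhedron, using Lemma \ref{le:B} to pin down precisely which members of the exceptional class $\ce=\ct\cup\cq\cup\{S_5,S_7\}$ are $3$-connected. From the remark preceding this appendix, the only polyhedron in $\ct_m$ is $T_m$. From Lemma \ref{le:B}, the only polyhedra in $\cq$ are $B_4\in\cq_3$ and, for every $\ell\geq 5$, $B_\ell,B'_\ell\in\cq_5$. Finally $S_5$ and $S_7$ are themselves polyhedra. Thus
\[
\ce\cap\{\text{polyhedra}\}=\{T_m:m\geq 2\}\cup\{B_4\}\cup\{B_\ell,B'_\ell:\ell\geq 5\}\cup\{S_5,S_7\}.
\]
Every polyhedron outside this list falls into the generic regime of Theorem \ref{thm:2}, and satisfies formula \eqref{eq:Agen} verbatim; the condition $\Delta\geq 3$, which holds automatically for polyhedra, guarantees that the block $\{0,1\}$ is not empty, so no degenerate sub-case of \eqref{eq:Agen} needs to be invoked.

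It then remains to match each exceptional polyhedral family with the corresponding formula of Theorem \ref{thm:2}. For $T_m$ with $m\geq 3$ one simply copies \eqref{eq:AT}; the case $T_2=K_4$ is the first line of \eqref{eq:Aexc}, and $S_5,S_7$ are the second and third lines. For $B_4$ one applies \eqref{eq:AQ} with $m=3$, $\Delta=4$, $p=6$, which produces $\{\{0,2\},\{0,2\},\{0\},\{0\},\emptyset,\dots\}$. For $B_\ell$ and $B'_\ell$, $\ell\geq 5$, I would record that $p=\ell+2$, $\Delta=\ell$, and that both lie in $\cq_5$; then \eqref{eq:AQ} with $m=5$ gives $\{0,1,2\}$ at positions $A_3,A_4$, then $\{0,2\}$ at positions $A_5,\dots,A_\ell$, then $\{0\}$ at $A_{\ell+1},A_{\ell+2}$, and $\emptyset$ thereafter, yielding the single formula $\fa(B_\ell)=\fa(B'_\ell)$ stated in the proposition.

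The only substantive check is the numerics for the $\cq$-families, since the transitions between the blocks $\{0,1,2\}$, $\{0,2\}$, $\{0\}$ and $\emptyset$ all depend on $m$, $\Delta$ and $p$ simultaneously. The main (minor) obstacle is therefore to verify in each case that the ranges of indices produced by \eqref{eq:AQ} and \eqref{eq:AT} agree with the explicit finite prefixes listed in the proposition; once that bookkeeping is carried out, the statement follows immediately from Theorem \ref{thm:2} and Lemma \ref{le:B}.
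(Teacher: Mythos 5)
Your proposal matches the paper's treatment exactly: the paper presents this proposition as an immediate consequence of Theorem \ref{thm:2}, with Lemma \ref{le:B} (together with the observation that $T_m$ is the only polyhedron in $\ct_m$) identifying precisely which members of $\ce$ are $3$-connected, and your index bookkeeping for $T_m$, $B_4$, $B_\ell$, $B'_\ell$, $K_4$, $S_5$, $S_7$ is accurate. One small correction: $\Delta\geq 3$ does not rule out every degenerate sub-case of \eqref{eq:Agen} --- a polyhedron with $L\leq 2$ (e.g.\ the cube or the dodecahedron) has no $\{0,1,2\}$ block, and $L=\Delta$ would empty the $\{0,1\}$ block --- but the formula in the proposition carries the same convention as \eqref{eq:Agen}, so this does not affect the validity of the argument.
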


\section{Common neighbours of outerplanar graphs}
\label{app:b}

The class of outerplanar graphs is a widely studied subclass of planar graphs, that was useful in the constructions of Lemma \ref{le:vast}. In the present section we classify it according to $A_n(G)$, for $n\geq 3$ and for $n=2$.

We note that if $G\in\ct\setminus\{K_4\}$ or if $G\in\cq$, then $G$ contains a subgraph isomorphic to $K_{2,3}$. Therefore, if $G\in\ct\cup\cq$ then $G$ is not outerplanar \cite[Theorem 11.10]{harary}. Then we have the following consequence of Theorem \ref{thm:2}.
\begin{cor}
	Let $G$ be an outerplanar graph. Then
	\begin{equation}
		\fa(G)=\{\{0,1,2\},\{0,1,2\},\dots,\underbrace{\{0,1,2\}}_{A_L},\{0,1\},\{0,1\},\dots,\{0,1\},\underbrace{\{0\}}_{A_{\Delta+1}},\{0\},\dots,\underbrace{\{0\}}_{A_p},\emptyset,\emptyset,\dots,\emptyset,\dots\}.
	\end{equation}
\end{cor}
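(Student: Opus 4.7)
The plan is to deduce the corollary directly from Theorem \ref{thm:2}, more precisely from the generic formula \eqref{eq:Agen}. Since that formula is asserted for every planar graph $G \notin \ce = \ct \cup \cq \cup \{S_5, S_7\}$, it suffices to verify that no outerplanar graph belongs to $\ce$.

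For the family $\ct \cup \cq$, this is exactly the observation recorded immediately before the statement. I would spell it out as follows: every member of $\cq_m$ contains $K_{2,3}$ as a subgraph, since by definition two vertices share a common neighbourhood of size $\geq m \geq 3$; every member of $\ct_m$ with $m \geq 3$ likewise contains $K_{2,3}$, taking the two apex vertices of $K_2$ together with any three pairwise non-adjacent vertices of the path $P_m$ covered by $H$; and the single remaining case $T_2 = K_4 \in \ct_2$ trivially contains $K_4$ as a subgraph. Each of these conclusions contradicts the characterisation of outerplanar graphs via the forbidden subgraphs $K_4$ and $K_{2,3}$ (\cite[Theorem 11.10]{harary}), so $\ct \cup \cq$ is disjoint from the class of outerplanar graphs.

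For the two remaining exceptional graphs $S_5$ and $S_7$, I would simply observe that they are polyhedra, hence $3$-connected, whereas any non-trivial outerplanar graph has connectivity at most $2$ by \cite[Corollary 11.9(a)]{harary}. Hence neither $S_5$ nor $S_7$ is outerplanar either.

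Combining the two previous paragraphs, the whole exceptional class $\ce$ is disjoint from the outerplanar graphs, so for every outerplanar $G$ the formula \eqref{eq:Agen} from Theorem \ref{thm:2} applies verbatim, yielding the displayed expression for $\fa(G)$. There is no genuine obstacle in this proof; the only subtlety worth flagging is that $K_4 = T_2 \in \ct$ has to be excluded via its $K_4$-subgraph rather than via the $K_{2,3}$ argument used for the remainder of $\ct \cup \cq$.
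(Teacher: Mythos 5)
Your proposal is correct and follows essentially the same route as the paper: show that no outerplanar graph lies in $\ce$ via the forbidden subgraphs $K_{2,3}$ and $K_4$ (the paper leaves $S_5,S_7$ implicit, but as you note they are $3$-connected polyhedra, hence not outerplanar), then apply \eqref{eq:Agen}. One cosmetic fix: the three path-vertices witnessing $K_{2,3}$ inside a member of $\ct_m$ need not be pairwise non-adjacent, since only subgraph (not induced subgraph) containment is needed --- and for $m=3$ no pairwise non-adjacent triple exists in $P_3$, so that condition should be dropped.
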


We now focus on $A_2(G)$. Let
\[\calr_\ell:=\{H+K_1 : H\text{ is a subgraph of }P_\ell\text{ with no isolated vertices}\}\]
and
\[\calr:=\bigcup_{\ell\geq 3}\calr_\ell.\]
Note that if $R=H+K_1\in\calr_\ell$, then $R+K_1\simeq H+K_2\in T_{\ell}$ (Figure \ref{fig:c1}).

\begin{cor}
The classification of outerplanar, connected graphs $G$ according to $A_2(G)$ may be found in Table \ref{t:2}.
\begin{table}[ht]
	\centering
	$\begin{array}{|c|c|}
		\hline A_2(G)&G\\
		\hline \emptyset&K_1\\
		\hline \{0\}&K_2\\
		\hline \{1\}&D_\ell,\ \ell\geq 1\\
		\hline \{0,1\}&\text{no $4$-cycles},\ G\not\simeq D_\ell,K_1,K_2\\
		\hline \{0,2\}&\text{square}\\
		\hline \{1,2\}&\calr\\
		\hline \{0,1,2\}&\text{otherwise}\\
		\hline
	\end{array}$
	\caption{Classification of outerplanar, connected graphs $G$ according to $A_2(G)$.}
	\label{t:2}
\end{table}

\end{cor}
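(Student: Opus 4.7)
My plan is to derive this corollary from Theorem \ref{thm:3} by intersecting each row of Table \ref{t:1} with the class of outerplanar graphs. The core tool is the standard fact that an outerplanar graph contains neither $K_4$ nor $K_{2,3}$ as a subgraph \cite[Theorem 11.10]{harary}. In particular, for every outerplanar $G$ we have $A_2(G)\subseteq\{0,1,2\}$, since the existence of $a\in A_2(G)$ with $a\geq 3$ would exhibit a $K_{2,a}\supseteq K_{2,3}$ subgraph.

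I first sweep Table \ref{t:1} and discard the rows whose members all embed $K_4$ or $K_{2,3}$: the rows $\{2\}$, $\{2,3\}$, $\{2,4\}$, $\{2,\ell\}$ for even $\ell\geq 6$, $\{2,3,4\}$, $\{2,3,\ell\}$ for $\ell\geq 5$, and $\{0,2,3\}$, together with the family $\{0,2\}\cup A'$ (each such construction embeds $K_{2,a_1}$ with $a_1\geq 3$) and the $A'\neq\emptyset$ parts of the families $\{1,2\}\cup A'$ and $\{0,1,2\}\cup A'$. In the $\{0,2\}$ row only the square $C_4$ survives, since the cube has a $K_{2,3}$ minor and the icosahedron's $30$ edges on $12$ vertices violate the outerplanar bound $|E|\leq 2|V|-3$. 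The rows $\emptyset,\{0\},\{1\},\{0,1\}$ transfer verbatim, giving $K_1,K_2,D_\ell$, and the connected outerplanar graphs without $4$-cycles that are not $D_\ell,K_1,K_2$.

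The substantive remaining task is to identify the outerplanar connected graphs with $A_2(G)=\{1,2\}$ as exactly the members of $\calr$ whose base $H$ contains a $P_3$ subgraph (the members of $\calr$ with $H$ a disjoint union of edges yield $G=D_\ell$, already placed in the $\{1\}$ row). I would first show such a $G$ has a dominating vertex. Since $0\notin A_2(G)$, $G$ has diameter $\leq 2$; by \cite[Corollary 11.9(a)]{harary}, $G$ has a vertex $v$ of degree $\leq 2$. If $\deg(v)=1$ with $N(v)=\{x\}$, every other vertex must share a neighbour with $v$, forcing adjacency to $x$ and making $x$ dominating. If $\deg(v)=2$ with $N(v)=\{x,y\}$, every other vertex is adjacent to $x$ or $y$; a case analysis on whether $xy\in E(G)$ (using $|N(x,y)|\leq 2$ to avoid $K_{2,3}$, together with the outerplanar edge bound and the fact that $2\in A_2(G)$ forces a $K_{2,2}$) shows that one of $x,y$ must be adjacent to all remaining vertices.

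Given a dominating vertex $x$, I decompose $G=(G-x)+K_1$. Since $|N(x,u)|=\deg_G(u)-1\in\{1,2\}$ for $u\neq x$, the subgraph $G-x$ has maximum degree $\leq 2$. An isolated vertex $v$ in $G-x$ would give $|N(x,v)|=0$, contradicting $0\notin A_2$; a cycle $C_\ell$ in $G-x$ is impossible because $C_\ell+K_1$ has $2\ell$ edges on $\ell+1$ vertices, exceeding the outerplanar bound $2\ell-1$. Hence $G-x$ is a disjoint union of non-trivial paths --- a subgraph of some $P_\ell$ without isolated vertices --- so $G\in\calr$. A direct computation on $G=H+K_1\in\calr$ then confirms $A_2(G)=\{1,2\}$ precisely when $H$ contains a $P_3$, and $A_2(G)=\{1\}$ (i.e.\ $G=D_\ell$) otherwise. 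Every outerplanar connected graph not fitting any earlier row occupies the residual $\{0,1,2\}$ row. The main obstacle will be the degree-$2$ subcase of the dominating-vertex argument, where one must combine outerplanarity (via edge counts and the forbidden $K_{2,3}$) with diameter $\leq 2$ to rule out configurations in which neither neighbour of $v$ is adjacent to every remaining vertex.
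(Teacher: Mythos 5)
Your overall plan is sound and reaches the correct table, but it follows a genuinely different route from the paper for the one substantive case, $A_2(G)=\{1,2\}$. The paper splits on the connectivity of $G$ (which is $1$ or $2$ by \cite[Corollary 11.9(a)]{harary}): in the cut-vertex case the condition $0\notin A_2$ immediately forces the cut vertex to be dominating, and in the $2$-connected case it invokes Hamiltonicity of $2$-connected outerplanar graphs, after which a single chord $w_0w_i$ of the Hamiltonian cycle plus the condition $N(w_j,w_k)\neq\emptyset$ for $j<i<k$ forces all chords to share an endpoint. That Hamiltonian-cycle step is precisely what makes the structure fall out with no case analysis. Your route instead extracts a dominating vertex directly from a vertex $v$ of degree at most $2$; the subsequent decomposition $G=(G-x)+K_1$, the exclusion of isolated vertices and cycles in $G-x$, and the observation that the matchings in $\calr$ give $D_\ell$ (so that the $\{1,2\}$ row is really $\calr$ minus the $D_\ell$, a point on which you are more precise than Table \ref{t:2} itself) are all correct and parallel to the paper's endgame. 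The preliminary sweep of Table \ref{t:1} via the forbidden $K_4$ and $K_{2,3}$ and the edge bound $|E|\leq 2|V|-3$ is also fine.

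The gap is the degree-$2$ subcase of your dominating-vertex claim. You assert that when $\deg(v)=2$ and $N(v)=\{x,y\}$, ``a case analysis on whether $xy\in E(G)$ \dots shows that one of $x,y$ must be adjacent to all remaining vertices,'' and you yourself flag this as the main obstacle. As written, this is not a routine verification: from $0\notin A_2$ you only get that every other vertex is adjacent to $x$ \emph{or} $y$, and if neither dominates you obtain $a\sim x$, $a\not\sim y$ and $b\sim y$, $b\not\sim x$ together with some $c\in N(a,b)\setminus\{x,y,v\}$; ruling out such configurations requires assembling a concrete $K_4$ or $K_{2,3}$ subdivision (or a violation of the edge bound), and the branch sets do not line up automatically --- for instance $\{x\},\{a\},\{c\},\{b,y\}$ fails to be a $K_4$ minor unless one also finds an $a$--$\{b,y\}$ connection avoiding $x,c$. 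The claim is true (it holds for every graph in $\calr$, which is what $G$ must be), but the argument that is supposed to establish it is the crux of this case and is missing. Either supply that case analysis in full, or replace it with the paper's device: since $G$ is $2$-connected here (a cut vertex would be dominating by your own degree-$1$ argument), it is Hamiltonian, and the chord argument closes the case cleanly.
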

\begin{proof}
If $G$ is outerplanar, then it does not contain a subgraph isomorphic to $K_{2,3}$ \cite[Theorem 11.10]{harary}. Hence for every distinct $u,v\in V(H)$ we have $|N(u,v)|\leq 2$, thus
\[A_2(G)\subseteq\{0,1,2\}.\]
Combining with the classification of planar, connected graphs in Table \ref{t:1}, we may thus populate all except the last two rows in Table \ref{t:2}.

Let $G$ be an outerplanar graph satisfying $A_2(G)=\{1,2\}$. The connectivity of $G$ may be either $1$ or $2$ \cite[Corollary 11.9(a)]{harary}. If it is $1$, call $v$ a separating vertex of $G$. Since $0\not\in A_2(G)$, $v$ is a dominating vertex. We denote by
\[w_1,w_2,\dots,w_{\ell},\qquad \ell=|V(G)|-1\]
the neighbours of $v$.

For every $1\leq i\leq\ell$ one has
$N(w_i,v)=\deg_{G-v}(w_i)$, so that $\deg_{G-v}(w_i)\in\{1,2\}$. Further, $G-v$ cannot contain cycles, otherwise $G$ would contain a subgraph homeomorphic to $K_4$, thus $G$ would not be outerplanar \cite[Theorem 11.10]{harary}. It follows that $G-v$ is a subgraph of the path $P_\ell$ with no isolated vertices i.e., $G\simeq\cal{R}_\ell$.

If instead $G$ is of connectivity $2$, then it is Hamiltonian \cite[Lemma 3.3]{maffucci2025regularity}, and we may start by writing the Hamiltonian cycle
\[C: w_0,w_1,w_2,\dots,w_\ell.\]
By outerplanarity, all remaining edges will be drawn inside of $C$. Note that $G$ itself cannot be a cycle, as the triangle $D_1$ satisfies $A_2=\{1\}$, and every other cycle satisfies $0\in A_2$. Hence w.l.o.g.\ we may add the edge $w_0w_i$ for some $2\leq i\leq\ell-1$. Letting $1\leq j\leq i-1$ and $i+1\leq k\leq\ell$, since
\[N(w_j,w_k)\neq\emptyset,\]
we deduce that $w_j,w_k$ are both adjacent to at least one of $w_0,w_i$. By outerplanarity, it follows that w.l.o.g.\ $w_0w_j\in E(G)$ for every $1\leq j\leq\ell$. Thus
\[G\simeq P_\ell+K_1\in\calr_\ell,\]
as claimed.

\end{proof}

\section{Classifying planar graphs according to $A_1(G)$}
\label{app:c}

It is well-known that every planar graph contains at least one vertex of degree at most $5$. We will now show that when this necessary condition is fulfilled, we may construct a vast class of planar, connected graphs $G$ for each feasible $A_1(G)$.
\begin{prop}
	\label{prop:a1}
Let $A$ be a set of integers such that $\min(A)=a$, $1\leq a\leq 5$. Then we may construct a vast class of planar, connected graphs $G$ satisfying $A_1(G)=A$.
\end{prop}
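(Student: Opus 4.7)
The approach is to construct, for each admissible $A$, a large family of planar connected graphs with vertex-degree set exactly $A$, proceeding by cases on $a = \min(A)$.

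For $a = 1$, I would use caterpillars. Recall a caterpillar is a tree whose non-leaves form a path $v_1 v_2 \cdots v_n$. If there are $p_i \geq 0$ pendant leaves at $v_i$, the degrees are $1 + p_1$ at $v_1$, $2 + p_i$ at each internal $v_i$, $1 + p_n$ at $v_n$, and $1$ at every leaf. Since trees are planar and connected, it suffices to choose $n$ and the $p_i$'s so that every element of $A \setminus \{1\}$ appears as a spine-vertex degree and no extraneous degree appears. This realizes every $A$ with $\min(A) = 1$, with abundant freedom (spine length, leaf distribution, repetition of required degrees) yielding a vast class.

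For $a = 2$, I would use a ``doubled caterpillar.'' By the previous case there is a caterpillar $T$ with $A_1(T) = \{1\} \cup (A \setminus \{2\})$; take a disjoint copy $T'$ and identify each leaf of $T$ with its counterpart in $T'$. Drawing $T$ with its leaves pointing down and $T'$ below it with leaves pointing up makes the identifications run through ``rungs'' in a planar ladder-like diagram. Each former leaf acquires degree $1 + 1 = 2$, while every spine degree is unchanged, so the resulting $G$ satisfies $A_1(G) = \{2\} \cup (A \setminus \{2\}) = A$.

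For $a \in \{3, 4, 5\}$, I would start from the doubled caterpillar above (minimum degree $2$) and augment it by adding edges at the former leaves in order to raise their degree from $2$ to $a$, via an auxiliary structure drawn along the outside of the ladder. For $a = 3$, add a perfect matching among the former leaves, always arrangeable by padding the caterpillar so that the total number of leaves is even. For $a = 4$, add a Hamilton cycle through the former leaves, drawn to wrap around the ladder. For $a = 5$, add a $3$-regular planar structure on the former leaves, for instance by introducing a parallel layer of auxiliary vertices alongside the ladder and gluing them in a prism-like pattern so that the former leaves and the auxiliary vertices all simultaneously reach degree $5$. Alternatively one can switch to an $a$-regular base such as a prism, antiprism, or icosahedron and attach caterpillar branches at prescribed faces, adjusting face sizes as needed.

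The main obstacle is the case $a = 5$: lifting the former leaves all the way from degree $2$ to degree $5$ while staying planar and introducing no degrees outside $A$ requires coordinated choices of the caterpillar, the doubling identification, and the $3$-regular side structure, with parity issues absorbed by padding the caterpillar with extra spine vertices whose degrees already lie in $A$. Vastness in every case is immediate from the large freedom in the base caterpillar and in the placement of the augmenting gadgets, which yields infinitely many non-isomorphic realizations for each admissible $A$.
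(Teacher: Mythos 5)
Your construction runs parallel to the paper's (caterpillar base, then augment the leaves up to degree $a$), with one genuine variation: you first ``double'' the caterpillar so the former leaves start at degree $2$, and then add a matching (for $a=3$) or a Hamilton cycle through the former leaves (for $a=4$). For $a\leq 4$ this is sound and in the case $a=4$ arguably cleaner than the paper, which instead adds a cycle through the degree-$1$ leaves and then apex vertices over blocks of four, forcing a divisibility condition $4\mid p_1$ that your cycle-through-degree-$2$-vertices avoids. (Two small points you should still patch: $A=\{2\}$ degenerates, since doubling $K_2$ produces a multigraph, so use a cycle there; and you should verify, as the paper does via the handshaking identity $p_1=2+\sum_i(a_i-2)m_i$, that the parity of the leaf count can always be adjusted by padding without leaving $A$.)

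The genuine gap is the case $a=5$, which is the crux of the proposition since $5$ is the extremal minimum degree for planar graphs. You need a planar gadget, attached to a block of former leaves of degree $2$, that raises each of them by exactly $3$ while every auxiliary vertex introduced has degree exactly $5$, and you offer only ``a prism-like pattern'' or ``an icosahedron with caterpillar branches attached, adjusting face sizes as needed.'' Neither is a construction: a prism is $3$-regular, so a prism-like side layer does not make its own vertices degree $5$; and attaching a branch at a vertex of a $5$-regular base pushes that vertex to degree $6\notin A$ in general. A counting check shows the gadget cannot be small or naive: if $t$ auxiliary vertices of degree $5$ serve $k$ leaves needing $3$ extra edges each, bipartite planarity alone forces $t\geq(k+4)/2$, so edges among the auxiliary vertices are unavoidable and must themselves be balanced to keep every auxiliary degree at $5$. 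The paper resolves exactly this with an explicit gadget of twelve auxiliary vertices per block of ten leaves (Figure \ref{fig:u5bis}), together with the observation that $10\mid p_1$ is achievable because each added degree-$5$ spine vertex increases $p_1$ by $3$ and $\gcd(3,10)=1$. Without an explicit gadget and the accompanying divisibility argument, your $a=5$ case is an assertion rather than a proof.
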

\begin{proof}
Given any set of positive integers $A$, we may construct a vast class of $(p,q)$-trees $G'$ where all vertex degrees belong to the set $A\cup\{1\}$. For instance, we may take a caterpillar with central vertices of degrees the elements of $A$ in any order, possibly with multiplicities. If $a=1$ we are done. Otherwise, letting
\[V_1=\{v\in V(G') : \deg(v)=1\}, \qquad p_1=|V_1|,\]
we wish to perform a graph transformation as follows. For $a=2$, assuming that $2\mid p_1$ we simply add a perfect matching between the elements of $V_1$. 

For $3\leq a\leq 5$, assuming that $p_1\geq 3$ we start by adding a cycle $C$ containing all elements of $V_1$. In case $a=3$ we are done. In case $a=4$, assuming that $4\mid p_1$ we partition the vertices of $C$ into subsets of cardinality $4$, with each subset containing consecutive vertices along $C$. For each subset, as in Figure \ref{fig:u4bis} we then add a vertex (the central one in the illustration) and edges connecting it to the four elements of the subset (the other four vertices in the illustration).

Similarly in case $a=5$,  assuming that $10\mid p_1$ we partition the vertices of $C$ into subsets of cardinality $10$, with each subset containing consecutive vertices along $C$. For each subset, as in Figure \ref{fig:u5bis} we then add twelve vertices (the central ones in the illustration) and edges as in the figure.

Since $G'$ is a tree, each of the described transformations preserves planarity. In each case they produce a graph $G$ with the desired minimum vertex degree $a$.
\begin{figure}[ht]
	\centering
	\begin{subfigure}{0.48\textwidth}
		\centering
		\includegraphics[width=2.25cm]{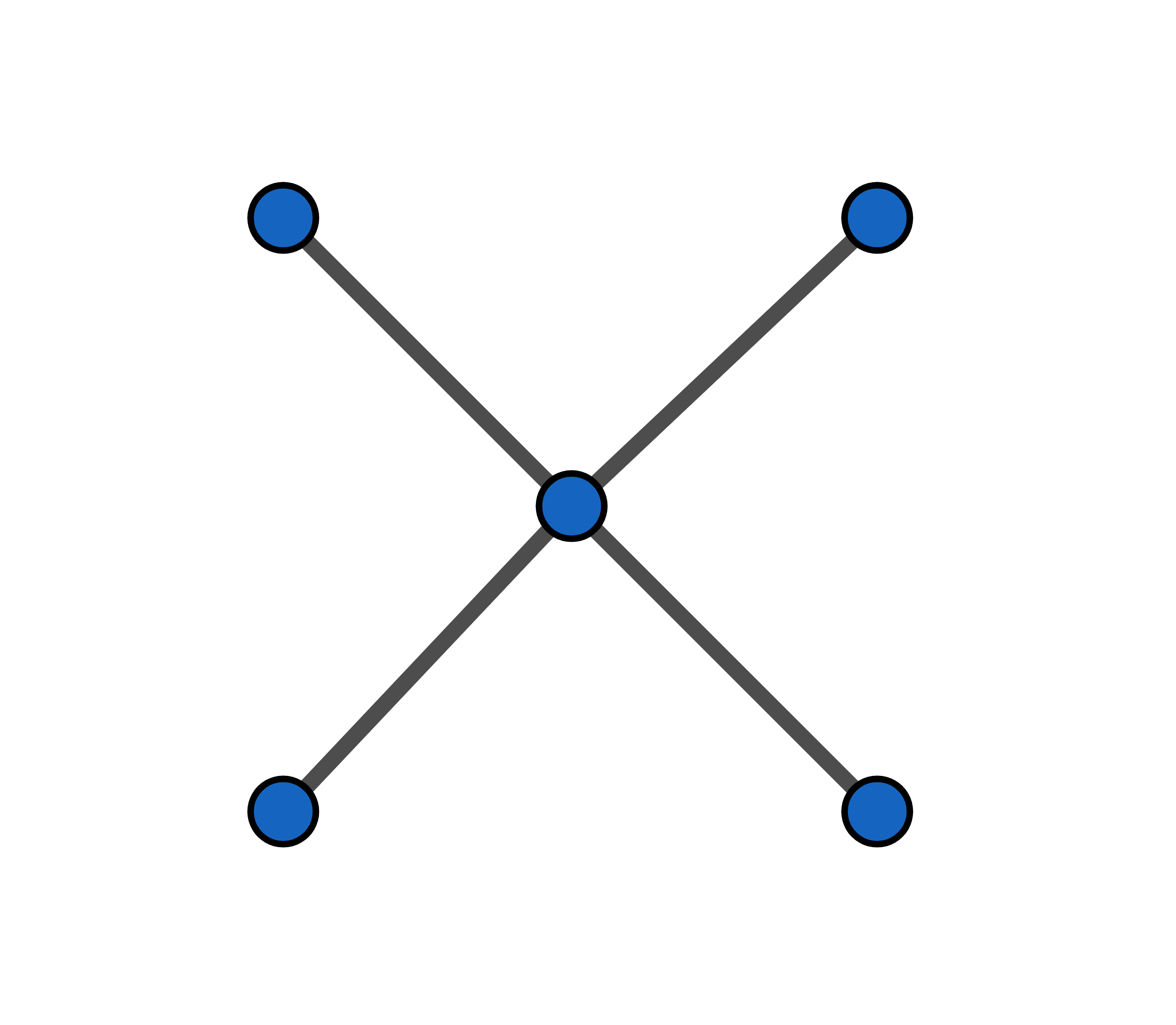}
		\caption{$a=4$.}
		\label{fig:u4bis}
	\end{subfigure}
	\hfill
	\begin{subfigure}{0.48\textwidth}
		\centering
		\includegraphics[width=3.5cm]{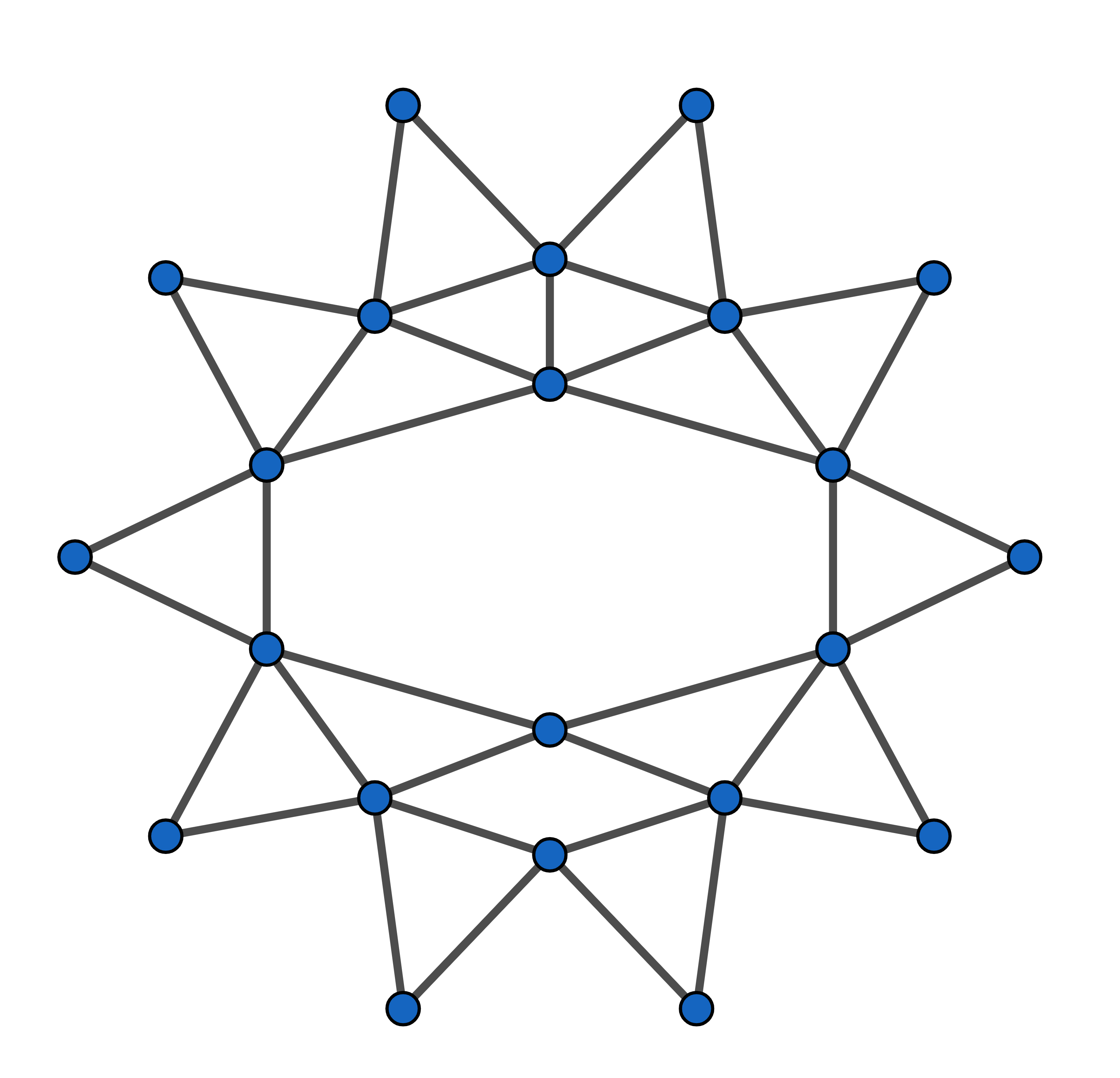}
		\caption{$a=5$.}
		\label{fig:u5bis}
	\end{subfigure}
	\caption{Proposition \ref{prop:a1}.}
	\label{fig:u}
\end{figure}

It remains to show that for every set of positive integers $A$, we may construct starting from $G'$ a tree $G''$ where all vertex degrees belong to the set $A\cup\{1\}$ and such that $p_1(G'')$ has the desired congruence property in the cases $a\in\{2,4,5\}$. Then starting from $G''$ and proceeding as above, one obtains a planar, connected graph $G$ with $A_1(G)=A$.

The degree sequence of $G'$ may be written as
\[\sigma': a_1^{m_1},a_2^{m_2},\dots,a_\ell^{m_\ell},1^{p_1},\]
where the exponents are positive integers representing quantities of vertices of the given degree, and $a_1,a_2,\dots,a_\ell$ are the elements of $A$. Since $q=p-1$ as $G'$ is a tree, and by the handshaking lemma,
\[p_1+\sum_{i=1}^{\ell}a_im_i=2q=-2+2p=-2+2\left(p_1+\sum_{i=1}^{\ell}m_i\right),\]
thus
\begin{equation}
	\label{eq:p1}
p_1=2+\sum_{i=1}^{\ell}(a_i-2)m_i.
\end{equation}
If $a=5$, we may add any number of vertices of degree $5$ to $G'$ (for instance if $G'$ is a caterpillar, by prolonging the central path). Due to \eqref{eq:p1}, adding each of these vertices increases $p_1$ by $5-2=3$, and since $3,10$ are coprime, we are able to construct a tree $G''$ such that $10\mid p_1(G'')$ and $A_1(G'')=A_1(G')=A$.

If $a\in\{2,4\}$, then we take for $G'$ an even number of vertices of odd degree (possibly none, if $A$ contains only even integers), so that $p_1$ is even. If $a=4$ and $4\mid p_1(G')$, or if $a=2$, we simply take $G''=G'$ and we are done. Lastly, if $a=4$ and $p_1(G')\equiv 2\pmod 4$, we add to $G'$ a vertex of degree $4$ so that $p_1$ is increased by $4-2=2$, and the proof is complete.
\end{proof}

Recall that polyhedral graphs have minimal vertex degree at least $3$, and outerplanar graphs have minimal vertex degree at most $2$. On the other hand, we note that for $3\leq a\leq 5$, the graphs $G$ constructed in Proposition \ref{prop:a1} are all polyhedra, and for $1\leq a\leq 2$, the graphs $G$ constructed in Proposition \ref{prop:a1} are all outerplanar. We deduce the following.
\begin{cor}
	\label{cor:a1}
	Let $A$ be a set of integers such that $\min(A)=a$, $3\leq a\leq 5$. Then we may construct a vast class of polyhedra $G$ satisfying $A_1(G)=A$.
\end{cor}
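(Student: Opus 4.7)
The corollary reduces to verifying that, for $a\in\{3,4,5\}$, the planar connected graph $G$ produced in Proposition~\ref{prop:a1} is in fact $3$-connected, since it already has $A_1(G)=A$ and minimum degree at least $3$. The vast-class conclusion then follows at once from the freedom in choosing the underlying tree $G''$, which can be varied in infinitely many ways while respecting the prescribed degree sequence.

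For $a=3$ the graph is $G=G''\cup C$, where $G''$ is a tree in which every non-leaf has degree at least $3$ and $C$ is a cycle through all leaves of $G''$. The plan is to show directly that no pair $\{x,y\}$ separates $G$. If $x,y$ are both leaves of $G''$, then $G''-x-y$ is still a connected tree spanning the remaining vertices, so $G-x-y$ is connected. If $x,y$ are both internal, then $C$ is intact; the key claim is that every component $T'$ of $G''-x-y$ contains at least one leaf of $G''$, and thereby connects to $C$. Otherwise, every leaf $w$ of $T'$ would be a non-leaf of $G''$, hence of $G''$-degree at least $3$, which forces $w$ to be adjacent to both $x$ and $y$ in $G''$; but in a tree $x$ and $y$ admit at most one common neighbour, so $T'$ would have at most one leaf and would in fact reduce to a singleton $\{v\}$ with $N_{G''}(v)\subseteq\{x,y\}$, contradicting $\deg_{G''}(v)\geq 3$. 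The mixed case (one of $x,y$ a leaf, the other internal) is handled analogously, with the unique $G''$-neighbour of the removed leaf playing the role of the common neighbour.

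For $a\in\{4,5\}$ the construction augments the cycle-through-leaves skeleton by iteratively inserting ``central'' vertices, each adjacent to four consecutive vertices of $C$ in the $a=4$ case, or to a prescribed pattern of at least three previously present vertices in the $a=5$ case. The standard observation that appending a vertex of degree at least $3$ to a $3$-connected graph preserves $3$-connectivity then lets us conclude inductively from the $a=3$ base case: any purported $2$-cut $\{x,y\}$ in the extended graph would, after separating on whether a newly inserted vertex lies in $\{x,y\}$, collapse to a $2$-cut or a $1$-cut in the $3$-connected predecessor.

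The main obstacle is the internal--internal sub-case for $a=3$, where both the tree structure of $G''$ (to cap the number of common neighbours of $x$ and $y$) and the lower bound $\deg_{G''}(v)\geq 3$ on non-leaves must be combined to rule out the pathological components of $G''-x-y$ described above.
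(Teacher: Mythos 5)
Your overall strategy is the same as the paper's: reduce Corollary \ref{cor:a1} to the claim that the graphs built in Proposition \ref{prop:a1} for $3\leq a\leq 5$ are $3$-connected (hence polyhedra), with the vastness inherited from the freedom in choosing $G''$. The difference is that the paper records this claim as a bare observation immediately before the corollary and offers no verification, whereas you actually prove it. Your $a=3$ argument is correct and complete: the three cases (two leaves, two internal vertices, mixed) are handled properly, and the key point --- that a component of $G''-x-y$ containing no leaf of $G''$ would force its tree-leaves to be adjacent to both $x$ and $y$, which in a tree pins the component down to a singleton of $G''$-degree at most $2$ --- is exactly the right combination of the hypotheses. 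Since for $a\in\{4,5\}$ the internal degrees are still at least $3$, the same argument gives $3$-connectivity of the intermediate graph consisting of $G''$ together with the cycle $C$ through its leaves, and for $a=4$ each new vertex of Figure \ref{fig:u4bis} is attached to four vertices of that $3$-connected graph, so the standard vertex-addition fact finishes that case.

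The one soft spot is $a=5$. Your induction needs an ordering of the twelve new vertices of each gadget of Figure \ref{fig:u5bis} in which every newly inserted vertex already has at least three neighbours among the ten cycle vertices and the previously inserted vertices. Such an ordering need not exist: only $20$ edges run from the twelve interior vertices to the ten vertices of $C$ (each cycle vertex must rise from degree $3$ to degree $5$), so it is entirely possible --- and it happens for the natural realisation of the gadget as a $10$-vertex inner cycle forming an antiprism with the outer ten, capped by two apexes --- that every interior vertex has at most two neighbours on $C$, in which case the very first insertion already violates the degree-$3$ requirement. The conclusion is still true, but it requires either a direct check that no pair of vertices separates the augmented gadget (in the spirit of your $a=3$ case analysis), or an insertion scheme that interleaves vertex additions with edge additions between already-present vertices (both operations preserve $3$-connectivity), rather than the pure one-vertex-at-a-time induction as stated. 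Since the paper proves nothing here, your write-up is strictly more informative even with this gap, but the $a=5$ step should not be presented as an immediate consequence of the vertex-addition lemma.
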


\begin{cor}
	Let $A$ be a set of integers such that $\min(A)=a$, $1\leq a\leq 2$. Then we may construct a vast class of outerplanar, connected graphs $G$ satisfying $A_1(G)=A$.
\end{cor}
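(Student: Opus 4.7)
The plan is to read this corollary directly off the construction carried out in the proof of Proposition \ref{prop:a1}. Indeed the sentence immediately preceding the statement already asserts that for $1\leq a\leq 2$ the graphs produced in the proof of Proposition \ref{prop:a1} are outerplanar; what remains is to make that observation rigorous.

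For $a=1$, the construction simply returns $G=G''$, a caterpillar. Every tree is outerplanar (it admits a drawing on a horizontal line with all other edges rendered as arcs above, so that every vertex sits on the outer region), hence this case is immediate and the freedom in the choice of the caterpillar $G''$ from Proposition \ref{prop:a1} already gives a vast class of solutions.

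For $a=2$, the construction first produces a caterpillar $G''$ with an even number of leaves, then adds a perfect matching on $V_1$. I would fix a planar drawing of $G''$ by placing the spine $s_1,s_2,\dots,s_\ell$ on a horizontal line and drawing all leaves above; in this drawing every vertex lies on the outer region, and the leaves inherit a natural left-to-right order $v_1,v_2,\dots,v_{p_1}$ along the outer boundary. I would then choose the matching to be $\{v_{2i-1}v_{2i} : 1\leq i\leq p_1/2\}$, drawing each edge as a small arc above the two leaves it joins. Consecutive leaves in the boundary order are mutually visible across the outer face, so these arcs can be drawn pairwise disjoint and above all pre-existing edges, keeping every vertex on the new outer region. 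The resulting graph $G$ is therefore outerplanar and connected; every leaf has gained exactly one edge and now has degree $2$, while spine vertices retain their degrees from $A$, so by the counting argument in Proposition \ref{prop:a1} we have $A_1(G)=A$.

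The only real obstacle is that an \emph{arbitrary} matching on $V_1$ need not preserve outerplanarity; the above prescription of pairing consecutive leaves on the outer boundary is what resolves this. Since the caterpillar $G''$ can be chosen in many ways with any prescribed degree multiset drawn from $A$ (subject to the parity condition on $p_1$ handled in Proposition \ref{prop:a1}), and since one may further vary the pairing of leaves within the outerplanar constraint, the construction yields a vast class of outerplanar, connected graphs $G$ satisfying $A_1(G)=A$, as required.
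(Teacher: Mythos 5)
Your proposal is correct and follows the paper's own route: the corollary is read directly off the construction of Proposition \ref{prop:a1}, whose outputs for $a\in\{1,2\}$ the paper simply asserts to be outerplanar. Your observation that an \emph{arbitrary} perfect matching on $V_1$ need not preserve outerplanarity (e.g.\ matching three leaves of one spine vertex to three leaves of an adjacent spine vertex creates a subdivision of $K_{2,3}$) is a genuine point the paper glosses over, and your prescription of pairing leaves that are consecutive along the outer boundary is exactly the refinement needed to make the assertion rigorous.
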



\paragraph{Future directions.}
The literature of planar graphical sequences has had recent important contributions \cite{bar2024sparse,bar2025approximate}. In Section \ref{sec:disc} we have proposed $n$-degree sequences as a generalisation of degree sequences. It would be interesting to understand facts relating to $n$-degree sequences of planar graphs, such as graphicity, unigraphicity, and constructing realisations for a given $n$-degree sequence.
\\
Another possible direction is the study of the sets $A_n(G)$ for other relevant classes of graphs.

\paragraph{Acknowledgements.}
Riccardo W. Maffucci was partially supported by Programme for Young Researchers `Rita Levi Montalcini' PGR21DPCWZ \textit{Discrete and Probabilistic Methods in Mathematics with Applications}, awarded to Riccardo W. Maffucci.

\bibliographystyle{abbrv}
\bibliography{bibgra}

\begin{thebibliography}{10}

\bibitem{adams2019planar}
P.~Adams and Y.~Nikolayevsky.
\newblock Planar bipartite biregular degree sequences.
\newblock {\em Discrete Mathematics}, 342(2):433--440, 2019.

\bibitem{bar2024sparse}
A.~Bar-Noy, T.~B{\"o}hnlein, D.~Peleg, Y.~Ran, and D.~Rawitz.
\newblock Sparse graphic degree sequences have planar realizations.
\newblock In {\em 49th International Symposium on Mathematical Foundations of
  Computer Science (MFCS 2024)}, pages 18:1--18:17. Schloss
  Dagstuhl--Leibniz-Zentrum f{\"u}r Informatik, 2024.

\bibitem{bar2025approximate}
A.~Bar-Noy, T.~B{\"o}hnlein, D.~Peleg, Y.~Ran, and D.~Rawitz.
\newblock Approximate realizations for outerplanaric degree sequences.
\newblock {\em Journal of Computer and System Sciences}, 148:103588, 2025.

\bibitem{dieste}
R.~Diestel.
\newblock Graph theory 3rd ed.
\newblock {\em Graduate texts in mathematics}, 173, 2005.

\bibitem{erdgal}
P.~Erd{\"o}s and T.~Gallai.
\newblock Graphs with prescribed degrees of vertices.
\newblock {\em Mat. Lapok}, 11:264--274, 1960.

\bibitem{hakimi}
S.~Hakimi.
\newblock On the realizability of a set of integers as degrees of the vertices
  of a graph.
\newblock {\em SIAM Journal Applied Mathematics}, 1962.

\bibitem{harary}
F.~Harary.
\newblock {\em Graph theory}.
\newblock Addison-Wesley, 1991.

\bibitem{have55}
V.~Havel.
\newblock A remark on the existence of finite graphs.
\newblock {\em Casopis Pest. Mat.}, 80:477--480, 1955.

\bibitem{koren1}
M.~Koren.
\newblock Sequences with a unique realization by simple graphs.
\newblock {\em Journal of Combinatorial Theory, Series B}, 21(3):235--244,
  1976.

\bibitem{li1975}
S.-Y.~R. Li.
\newblock Graphic sequences with unique realization.
\newblock {\em Journal of Combinatorial Theory, Series B}, 19(1):42--68, 1975.

\bibitem{maffucci2025classification}
R.~W. Maffucci.
\newblock Classification of polyhedral graphs by numbers of common neighbours.
\newblock {\em arXiv:2508.01349}.

\bibitem{maffucci2025regularity}
R.~W. Maffucci.
\newblock Regularity and separation for {S}ierpi{\'n}ski products of graphs.
\newblock {\em arXiv:2506.16864}.

\bibitem{mafpo2}
R.~W. Maffucci.
\newblock Constructing certain families of $3$-polytopal graphs.
\newblock {\em Journal of Graph Theory}, pages 1--18, 2022.

\bibitem{maffucci2023self}
R.~W. Maffucci.
\newblock Self-dual polyhedra of given degree sequence.
\newblock {\em The Art of Discrete and Applied Mathematics}, 6(1):1--12, 2023.

\bibitem{maffucci2024characterising}
R.~W. Maffucci.
\newblock Characterising 3-polytopes of radius one with unique realisation.
\newblock {\em Australasian Journal of Combinatorics}, 89(2):268--293, 2024.

\bibitem{maffucci2024rao}
R.~W. Maffucci.
\newblock Rao's theorem for forcibly planar sequences revisited.
\newblock {\em Discrete Mathematics}, 347(10):114102, 2024.

\bibitem{maffucci2025faces}
R.~W. Maffucci.
\newblock On the faces of unigraphic 3-polytopes.
\newblock {\em European Journal of Combinatorics}, 124:104081, 2025.

\bibitem{maffucci2026self}
R.~W. Maffucci.
\newblock On self-duality and unigraphicity for 3-polytopes.
\newblock {\em Discrete Applied Mathematics}, 381:246--260, 2026.

\bibitem{schhak}
E.~Schmeichel and S.~Hakimi.
\newblock On planar graphical degree sequences.
\newblock {\em SIAM Journal on Applied Mathematics}, 32(3):598--609, 1977.

\end{thebibliography}
\end{document}